\newcommand{\bg}{\bar{g}_{m,d} }
\newcommand{\bh}{\bar{h}_{m,d} }
\newcommand{\tg}{\tilde{g}_{m,d,r} } 
\newcommand{\thh}{\tilde{h}_{m,d,r} } 
\newcommand{\bigzero}{\mbox{\normalfont\Large 0}}
\theoremstyle{plain}  
\newtheorem{theorem}{Theorem}[section]
\newtheorem{lemma}[theorem]{Lemma}
\newtheorem{corollary}[theorem]{Corollary}
\newtheorem{proposition}[theorem]{Proposition}
\theoremstyle{definition}
\theoremstyle{remark} 
\newtheorem{remark}{Remark}[section]
\newtheorem{definition}{Definition}
\newtheorem{assumption}{Assumption}
\title{\LARGE \bf Shape-Constrained Regression\\
	using 	Sum of Squares Polynomials}
\author{Mihaela Curmei\thanks{Mihaela Curmei is with the department of Electrical Engineering and Computer Science at the University of California, Berkeley. Email: \texttt{mcurmei@eecs.berkeley.edu}} \and Georgina Hall\thanks{Georgina Hall is with the department of Decision Sciences at INSEAD. Email: \texttt{georgina.hall@insead.edu}}}
\begin{document}
\date{}
\maketitle


\begin{abstract}
We present a hierarchy of semidefinite programs (SDPs) for the problem of fitting a shape-constrained (multivariate) polynomial to noisy evaluations of an unknown shape-constrained function. These shape constraints include convexity or monotonicity over a box.  We show that polynomial functions that are optimal to any fixed level of our hierarchy form a consistent estimator of the underlying shape-constrained function. As a byproduct of the proof, we establish that sum-of-squares-convex polynomials are dense in the set of polynomials that are convex over an arbitrary box. A similar sum of squares type density result is established for monotone polynomials. In addition, we classify the complexity of convex and monotone polynomial regression as a function of the degree of the polynomial regressor. While our results show NP-hardness of these problems for degree three or larger, we can check numerically that our SDP-based regressors often achieve similar training error at low levels of the hierarchy. Finally,  on the computational side, we  present an empirical comparison of our SDP-based convex regressors with the convex least squares estimator introduced in \cite{hildreth1954point,holloway1979estimation} and show that our regressor is valuable in settings where the number of data points is large and the dimension is relatively small. We demonstrate the performance of our regressor for the problem of computing optimal transport maps in a color transfer task and that of estimating the optimal value function of a conic program. A real-time application of the latter problem to inventory management contract negotiation is presented.
\end{abstract}

\begin{small}

\paragraph{Keywords:} Polynomial regression, convex regression, semidefinite programming, consistency of statistical estimators, optimal transport

\paragraph{AMS classification:}	90C23, 90C22, 62J02, 90C90, 90B05, 49Q22

\end{small}

\section{Introduction} \label{sec:intro}

Shape-constrained regression is a fundamental problem in statistics and machine learning. It posits the existence of a shape-constrained function $f$ that maps \emph{feature vectors} to \emph{response variables}. Its goal is to obtain an \emph{estimator} (or \emph{regressor}) of this function, with the same shape constraints, from noisy feature vector-response variable pairings. The shape constraints we consider are of two types here: convexity constraints over a box and $K$-bounded-derivative constraints over a box, as defined in Section~\ref{sec:sose}. Bounded-derivative constraints include as subcases both the case where the regressor is constrained to be monotone and the case where it is constrained to be Lipschitz-continuous with a fixed Lipschitz constant. Combined, these shape constraints cover the wide majority of shape constraints arising in applications. A short and non-exhaustive list of areas where regression with shape constraints such as these appear include economics~\cite{meyer1968consistent,pricing}, psychology~\cite{gallistel2004learning}, engineering~\cite{leukemia,software_failure,shade}, and medicine~\cite{prince1990reconstructing}. 

In this paper, we study a set of shape-constrained (multivariate) polynomial regressors, the Sum of Squares Estimators (SOSEs), which are obtained via a semidefinite programming hierarchy. They are parametric in $d$, their degree, and $r$, the level of the hierarchy (see Section~\ref{subsec:sose.def}). While we are not the first paper to consider shape-constrained polynomials of this type \cite{magnani2009convex,wang2019calibrating}, we are the first to propose a systematic analysis of estimators defined in this way, from a variety of angles. More specifically, our contributions are the following:
\begin{enumerate}[(1)]
	\item We showcase a regime in which the SOSEs are competitive in terms of computation time. This corresponds to the setting where the number of data points is large, the dimension is relatively small, and predictions need to be made often and quickly. Within this regime, we provide experimental evidence that the SOSEs outperform, in terms of generalization error, two alternative shape-constrained regressors, the Convex Least-Squares Estimator or CLSE \cite{hildreth1954point,holloway1979estimation} and the Maximum-Affine Estimator or MAE \cite{ghosh2019max}, which are among the most prevalent convex regressors; see Section \ref{sec:sose}.
	\item We show that, for fixed $r$, the SOSEs are consistent estimators of the underlying shape-constrained function. In doing so, we prove that sos-convex polynomials (see Section \ref{subsec:approx.results} for a definition) are dense in the set of polynomials convex over a box. We also show that a similar result holds for monotonicity. These results can be viewed as analogs of sum of squares density results for nonnegative polynomials in \cite{lasserre2007sum,lasserre2007sos} for convex and monotone polynomials, and may be of independent interest; see Section~\ref{sec:consistency}.
	\item We compare the SOSEs against their limits as $r \rightarrow \infty$: these limits correspond to the solutions to convex and $K$-bounded-derivative polynomial regression. We provide a complete characterization of the complexity of solving these problems as a function of the degree $d$ of the regressor. Our results show NP-hardness for degree three or larger. We also propose a convex optimization-based method for bounding the gap between the optimal value of shape-constrained polynomial regression problems and that of the SOSE training problem (for fixed~$r$). Our numerical experiments show that the gap is very small already for small values of $r$; see Section~\ref{sec:sc.pr}.
	\item We propose three applications which correspond to settings where the SOSEs perform particularly well. The first application is well known and involves fitting a production function to data in economics. Our main contribution here is to show that we outperform the prevalent approach in economics, which uses Cobb-Douglas functions. The second application is to the problem of computing optimal transport maps, and more specifically the problem of color transfer. While using shape-constrained regression to compute optimal maps is not new \cite{paty2020regularity}, we are the first to tackle it using sum of squares-based methods. This is intriguing as the approach adopted by \cite{paty2020regularity}, which relies on a variant of the CLSE, is not as well-suited to color transfer as the SOSEs are. Our third application is, to the best of our knowledge, an entirely novel use of shape-constrained regression. It involves estimating the optimal value function of a conic program. We present a real-time application of this problem to inventory management contract negotiation; see Section \ref{sec:apps}.
\end{enumerate}
The sum of squares techniques we present in this paper have the advantage, unlike other techniques, of being very modular: they can handle with ease a wide variety of shape constraints, including convexity, monotonicity, Lipschitz continuity, or any combinations of these, globally or over regions. They also produce explicit algebraic certificates enabling users to independently verify that the regressors obtained possess the shape constraints of interest. This can help with interpretability and enhance user trust. Through this paper, we hope to encourage a broader use of these techniques for shape-constrained regression, particularly in settings, such as color transfer, for which they are well-suited.

\section{The Sum of Squares Estimators (SOSEs)} \label{sec:sose}

We define the SOSEs in Section \ref{subsec:sose.def}, investigate their computation time and performance in Section \ref{subsec:sose.computation}, and compare them against two other estimators in Section \ref{subsec:sose.comparison}.

\subsection{Definition of the SOSEs} \label{subsec:sose.def}

Given a feature vector $X \in \mathbb{R}^n$ and a response variable $Y \in \mathbb{R}$, we model by a function $f: \mathbb{R}^n \rightarrow \mathbb{R}$ the relationship between $X$ and $Y$. We assume that $X$ belongs to a full-dimensional box $B \subset \mathbb{R}^n$:
\begin{align} \label{eq:box}
B=\{(x_1,\ldots,x_n) \in \mathbb{R}^n~|~ l_i \leq x_i \leq u_i,~\forall i=1,\ldots,n\} \subset \mathbb{R}^n,
\end{align}
where $l_i<u_i$ for all $i=1,\ldots,n$. This choice is made as, in applications, each feature typically lies within a range. Our approach also has natural extensions to other semialgebraic sets. As mentioned in Section \ref{sec:intro}, we assume that some ``shape information'' on $f$ over $B$ is known (which is less restrictive than assuming shape information knowledge over $\mathbb{R}^n$ in full). In particular, $f$ is assumed to have at least one of the two following shape constraints.
\begin{definition}[Convex over a box]\label{def:cvx}
	A function $f: \mathbb{R}^n \rightarrow \mathbb{R}$ is \emph{convex over a box $B$} if for any $x,y \in B$ and for any $\lambda \in [0,1]$, we have
	$f(\lambda x+(1-\lambda)y) \leq \lambda f(x)+(1-\lambda)f(y).$
	For twice differentiable functions $f$ and full-dimensional boxes $B$, this is equivalent\footnote{We use here the standard notation $A \succeq 0$ to denote that the symmetric matrix $A$ is positive semidefinite.} to
	$H_f(x) \succeq 0, \forall x \in B,$
	where $H_f(x)$ is the Hessian of $f$ at $x$.
\end{definition}
A proof of the equivalence can be found, e.g., in \cite[Section 1.1.4]{bertsekas2009convex}.

\begin{definition}[K-bounded derivatives over a box]\label{def:bdr}
	Given $K_1^-$,$K_1^+$,$\ldots$,$K_n^-$,$K_n^+ \in \mathbb{R} \cup\{\pm \infty\}$ with $K_i^- \leq K_i^+, i=1,\ldots,n$, let $K=(K_1^-,K_1^+,\ldots,K_n^-,K_n^+) \text{ and } I^{\pm}=\{i \in \{1,\ldots,n\}~|~ K_i^{\pm} \text{ is finite}\}.$ A continuously-differentiable function $f:\mathbb{R}^n \rightarrow \mathbb{R}$ is said to have \emph{$K$-bounded derivatives} over a box $B$ if
	\begin{align} \label{eq:kbdder}
	\frac{\partial f(x)}{\partial x_i} \geq K_i^-, ~\forall x\in B,~ \forall i \in I^-, \quad \frac{\partial f(x)}{\partial x_i} \leq K_i^+, ~\forall x \in B,~\forall i \in I^+.
	\end{align}
\end{definition}
Assuming that $f$ has these shape constraints, the goal is then to recover an estimator of $f$ with the same shape constraints from $m$ observed feature vector-response variable pairings $(X_i,Y_i)_{i=1,\ldots,m}$. We propose such an estimator in this paper: these are the Sum of Squares Estimators (SOSEs). 

Before we formally define them, we recall that an $n$-variate polynomial $p$ is a \emph{sum of squares} (sos) polynomial if $p(x_1,\ldots,x_n)=\sum_{i=1}^r q_i^2(x_1,\ldots,x_n)$ for some $n$-variate polynomials $q_i$. Furthermore, for a $t \times t$ polynomial matrix $M(x)$ (that is, a matrix with entries that are polynomials), we say that $M(x)$ is an \emph{sos matrix} if there exists a $t' \times t$ polynomial matrix $V(x)$ such that $M(x)=V(x)^TV(x)$, or equivalently, if the (scalar-valued) polynomial $y^TM(x)y$ in $x$ and $y$ is a sum of squares polynomial. We denote by $P_{n,d}$, the set of polynomials in $n$ variables and of degree at most $d$, by $\Sigma_{n,2d}$ the set of polynomials in $P_{n,2d}$ that are sums of squares, and by $\Sigma_{n,2d,t}^M$ the set of sos matrices of size $t \times t$ and with entries that are polynomials in $P_{n,2d}$.

We are now ready to formally define the SOSEs, $\tilde{g}_{m,d,r}$ and $\tilde{h}_{m,d,r}$.  Let $b_i(x_i)=(u_i-x_i)(x_i-l_i)$ for $i=1,\ldots,n$ so that $B=\{(x_1,\ldots,x_n)~|~b_1(x_1) \geq 0, \ldots, b_n(x_n) \geq 0\}.$
\begin{definition} \label{def:tg}
	Given $m$ feature vector-response variables $(X_i,Y_i)_{i=1,\ldots,m}$ and two nonnegative integers $d,r$, we define $\tg$ as:
	\begin{equation}\label{eq:opt.tg}
	\begin{aligned}
	\tg \mathop{\mathrel{:}}=&\arg &&\min_{g \in P_{n,d}, ~S_0 \in \Sigma_{n,2r,n}^M, S_1\ldots,S_n \in \Sigma_{n,2r-2,n}^M} \sum_{i=1}^m (Y_i-g(X_i))^2\\
	&\text{s.t. } &&H_g(x)= S_0(x)+b_1(x)S_1(x)+\ldots+b_n(x)S_n(x).
	\end{aligned}
	\end{equation}
\end{definition}

\begin{definition}\label{def:th} Let $I^{\pm}$ be as in Definition \ref{def:bdr}. Given $m$ feature vector-response variables $(X_i,Y_i)_{i=1,\ldots,m}$ and two nonnegative integers $d,r$, we define $\thh$ as:
	\begin{equation}\label{eq:opt.th}
	\begin{aligned}
	\thh \mathop{\mathrel{:}}=&\arg &&\min_{h \in P_{n,d},~s_{i0}^+, s_{i0}^- \in \Sigma_{n,2r}, s_{ij}^+,s_{ij}^-\in \Sigma_{n,2r-2}} \sum_{i=1}^m (Y_i-h(X_i))^2\\
	&\text{s.t. } &&K_i^+-\frac{\partial h(x)}{\partial x_i}= s_{i0}^+(x)+ s_{i1}^{+}(x)b_1(x)+\ldots+s_{in}^+(x)b_n(x), \text{ for }i \in I^+,\\
	& &&\frac{\partial h(x)}{\partial x_i}-K_i^-= s_{i0}^-(x)+ s_{i1}^{-}(x)b_1(x)+\ldots+s_{in}^-(x)b_n(x),\text{ for }i \in I^-.\\
	\end{aligned}
	\end{equation}
\end{definition}

Note that the SOSEs are polynomials in $n$ variables and of degree $d$. They always exist and are unique, provided that we take $m$ large enough and that the data points $\{X_i\}_{i=1,\ldots,m}$ are linearly independent. They are parametric estimators, depending on two parameters, $d$ and $r$, as well as on the data points $(X_i,Y_i)_{i=1,\ldots,m}$, as reflected in the notation. They also possess the appropriate shape constraints. Indeed, $\tilde{g}_{m,d,r}$ is convex over $B$ and $\tilde{h}_{m,d,r}$ has $K$-bounded derivatives over $B$. This can be seen by noting that when $x \in B$, $b_i(x) \geq 0$, that sos polynomials are nonnegative, and that sos matrices are positive semidefinite.

	\subsection{Computing the SOSEs and Dependence on Input Parameters} \label{subsec:sose.computation}
	
	It is well known that testing membership to $\Sigma_{n,2d}$ can be reduced to a semidefinite program (SDP). Indeed, a polynomial $p(x_1,\ldots,x_n)$ of degree $2d$ is sos if and only if there exists a positive semidefinite matrix $Q$ (we write $Q \succeq 0$) of size $\mathbb{R}^{\binom{n+d}{d} \times \binom{n+d}{d}}$ such that $p(x)~=z(x)^TQz(x)$, where $z(x)=(1,x_1,\ldots,x_n,\ldots,x_n^d)^T$ is the vector of all monomials of degree at most $d$. Thus, computing the SOSEs amounts to solving SDPs. Of interest to us is how the size of the SDPs scales with $m,n,d,$ and $r$. For both (\ref{def:tg}) and (\ref{def:th}), the data points only appear in the objective: thus, the size of the SDPs is independent of the number $m$ of data points. Their size does depend however on $n,d,$ and $r$. For (\ref{def:tg}), the number of equality constraints is equal to $\binom{n+2}{2} \cdot \binom{n+\max \{2r,d-2\}}{\max \{2r,d-2\}}$ and the size of the $n+1$ semidefinite constraints is $n \cdot \binom{n+r}{r} \times n \cdot \binom{n+r}{r}$. For (\ref{def:th}), the number of equality constraints is equal to $2n \cdot \binom{n+ \max\{d-1,2r\}}{\max \{ d-1,2r\}}$ and the size of the $2n+2$ semidefinite constraints is $\binom{n+r}{r} \times \binom{n+r}{r}$. Bearing in mind that $\binom{n+k}{k}=\binom{n+k}{n}$ and that $\binom{n+k}{k}=O((n+k)^k)$, we get that, for fixed $n$, the sizes of (\ref{def:tg}) and (\ref{def:th}) grow polynomially in $d$ and $r$, and that for fixed $d$ and $r$, the sizes of (\ref{def:tg}) and (\ref{def:th}) grow polynomially in $n$. 
	
	Even though the size of the SDPs scales polynomially with the parameters of interest, in practice, SDPs can suffer from scalability issues~\cite{majumdar2020recent}. Thus, computing the SOSEs is generally faster when $n,d,r$ are small, though $m$ can be taken as large as needed, as the size of the SDPs is independent of $m$. In practice, $m$ and $n$ are fixed as artifacts of the application under consideration. The parameters $r$ and $d$ however are fixed by the user. They should be chosen using a statistical model validation technique, such as cross validation, to ensure that generalization error is low on the test data. As an example, we experimentally investigate the impact of the choice of $r$ and $d$ on the generalization error for estimating the function $$f_1(w_1,\ldots,w_n)=(w_1+\ldots+w_n) \log(w_1+\ldots+w_n).$$ We plot the results in Figure \ref{fig:train_test_rmse}. To obtain these plots, we use datasets generated as explained in Appendix \ref{appendix:specs} with $m=10,000$. An analogous plot with a different function $f_2$ is given in Appendix \ref{subsec:add.exp}. We vary the values of $n,d,r$ as indicated in Figure \ref{fig:train_test_rmse}. The train RMSE (resp. test RMSE) are concepts formally defined in Appendix \ref{appendix:specs}. Roughly speaking, the lower they are, the closer the values obtained by evaluating the SOSEs on the training (resp. testing) feature vectors are to the training (resp. testing) response variables. Figure \ref{fig:train_test_rmse} indicates that low values of $d$ and $r$ often lead to better test RMSE (i.e., generalization error) than larger $d,r$. In fact, low $d$ and $r$ seem to have a regularization effect.
	
	\begin{figure}[]
		\begin{subfigure}{\textwidth}
			\centering
			\includegraphics[width=0.8\linewidth]{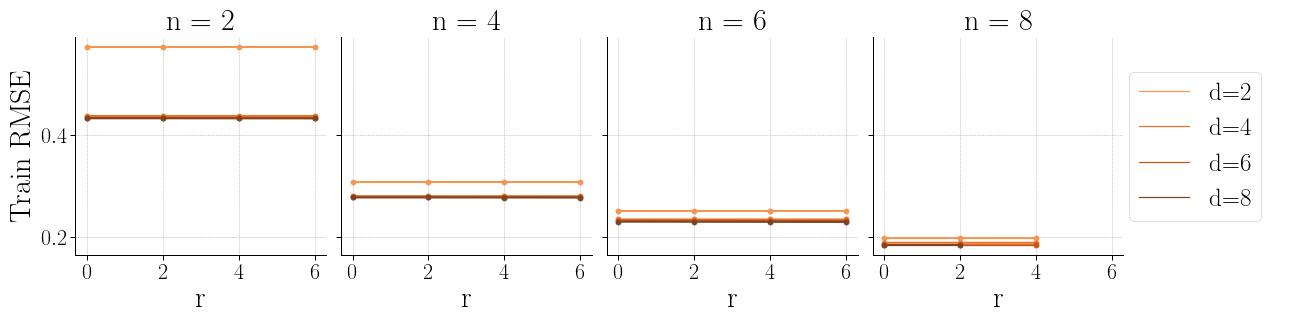}  
			\caption{\footnotesize Train RMSE.}
			\label{fig:train_rmse}
		\end{subfigure}
		\begin{subfigure}{\textwidth}
			\centering
			\includegraphics[width=0.8\linewidth]{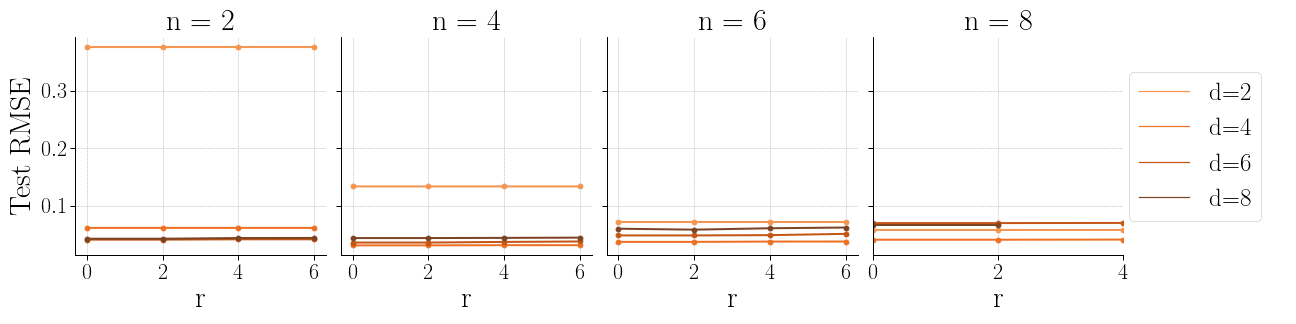}  
			\caption{\footnotesize Test RMSE.}
			\label{fig:test_rmse}
		\end{subfigure}
		\caption{Train and Test RMSE for (\ref{eq:opt.tg}) with $m=10,000$ data points generated as described in Appendix \ref{appendix:specs}, as the number $n$ of features, the degree $d$ of the polynomials, and the degree $r$ of the semidefinite programming hierarchy vary. Lighter color corresponds to lower $d$.}
		\label{fig:train_test_rmse}
	\end{figure}
	
	As well as being fast to compute when $m$ is large and $n$ is small, evaluating the SOSEs on a new feature vector is also fast to do as it simply amounts to evaluating a polynomial at a point. Thus, the SOSEs should be favored in applications where $m$ is large, $n$ is small, and predictions need to be made often and quickly (see Section \ref{sec:apps} for examples). In the next subsection, we give experimental evidence that the SOSEs tend to have low generalization error when compared to other prominent methods for convex regression. In the section that follows, we show favorable theoretical properties of the SOSEs, namely that they are consistent statistical estimators even when $r$ is fixed.
	
	\subsection{Comparison of the SOSEs Against Other Estimators} \label{subsec:sose.comparison}
	
	Many methods exist for shape-constrained regression, in particular for convex regression (see, e.g., \cite{papp2014shape} and the references within for a comprehensive literature review). We restrict ourselves to two here, the Convex Least Squares Estimator (CLSE) as in \cite{hildreth1954point, holloway1979estimation}, and the Max-Affine Estimator (MAE), as in, e.g., \cite{magnani2009convex}. The CLSE, $\hat{g}_m$, is a non-parametric convex piecewise-affine estimator, obtained by solving the quadratic program (QP): 
	\begin{equation} \label{eq:QP}
	\begin{aligned}
	&\min_{g_i \in \mathbb{R}, \xi_i \in \mathbb{R}^n} \sum_{i=1}^m (Y_i-g_i)^2\\
	&\text{s.t. } g_j \geq g_i+\xi_i^T(X_j-X_i), \text{ for }1 \leq i,j \leq m,
	\end{aligned}
	\end{equation}
	with optimal solution $\{g_i^*,\xi_i^*\}$, and taking 
	\begin{align} \label{eq:def.gm}
	\hat{g}_m(x)=\sup \{ g(x)~|~g \text{ convex }, g(X_i)=g_i^*, 1 \leq i, \leq~m\}, \text{ for all } x \in \mathbb{R}^n.
	\end{align}
	(Note that while the $\{g_i^*\}_{i=1,\ldots,m}$ are unique, the $\{\xi_i^*\}_{i=1,\ldots,m}$ are not, and consequently, defining $\hat{g}_m$ via (\ref{eq:def.gm}) is needed.) The MAE is a convex piecewise-affine estimator as well, but parametric with parameter the number of pieces, $k$. It is obtained as a solution of $\min_{\Theta_j \in \mathbb{R}^n, \beta_j \in \mathbb{R}} \sum_{i=1}^m (Y_i-\max_{1 \leq j \leq k} (X_i^T\Theta_j+\beta_j))^2$. Unlike the CLSE, the MAE is NP-hard to compute, but many heuristics have been developed (some with statistical guarantees) to obtain it. Here, we use the ones proposed by \cite{ghosh2019max}. 
	
	We restrict ourselves to the CLSE for three reasons. First, it is arguably the most prevalent shape-constrained regressor in the literature. Second, the computation time for many popular alternative regressors (e.g., based on isotonic regression, lattice methods, etc.) grows exponentially in the number $n$ of features and we wish to compare the SOSE against methods which, just like the SOSE with any fixed $d,r$, are polynomial in $n$. Finally, like the SOSE, the CLSE can be obtained by solving a convex program. We also consider the MAE, though it does not fit these criteria, as it can be viewed as a parametric version of the CLSE and the SOSE is also parametric. A downside in our opinion for both the CLSE and the MAE relates to the limited type of constraints that can be required. For example, as the CLSE and MAE are globally convex by definition, we cannot require that they be monotonous only, or convex only over a region, which some applications may call for. As they are piecewise-affine, we cannot require either that they be strongly convex, as needed, e.g., in the optimal transport application of Section \ref{subsec:opt.transport}. Extensions to incorporate $\ell$-strong convexity do exist \cite[Theorems 3.8, 3.14]{taylor}, but we would argue that they are not very straightforward to derive, contrarily to the SOSE where we would simply replace $H_g(x)$ in (\ref{def:tg}) by $H_g(x)-\ell I$, where $I$ is the $n \times n$ identity matrix.
	
	In terms of computation, the SOSE and the CLSE can be viewed as methods best-suited to complementary settings. While the SOSE is quicker to compute when $n$ is small and $m$ is large, the CLSE is quicker to compute when $n$ is large and $m$ is small. Indeed, the QP in (\ref{eq:QP}) has a number of variables that scales linearly and a number of constraints that scales quadratically with the number $m$ of data points (unlike the SDP whose size does not scale with $m$). This can be expected as the CLSE is non-parametric, while the SOSEs are not. We illustrate the differences in solving time in Figure \ref{fig:runtime_features_degree} (implementation details can be found in Appendix \ref{appendix:specs}). The SOSE is much faster to compute in settings where the number of data points is moderate to large ($m=2000$ to $m=10000$); see Figure~\ref{fig:runtime_features_degree}. In fact, for most of the QPs solved for these values of $m$, the 4-hour time-out that we put into place was reached. Evaluation of the CLSE, when it is defined as above, can also be quite slow, as it requires solving a linear program whose size scales with $m$ (see (\ref{eq:def.gm}) and, e.g., \cite{lim2012consistency}). When solving this linear program, one can encounter issues of infeasibility if the quadratic program is not solved to high accuracy. One can also encounter unboundedness issues if asked for a prediction on a point which does not belong to the convex hull of the training points. This is not the case for the SOSEs, though we point out that our statistical guarantees from Section \ref{sec:consistency} only apply to points in the convex hull of the training points. Typical heuristics for computing the MAE run much faster than the CLSE, as their solving time does not scale with $m$. They also are faster to compute than the SOSEs as they involve solving cheaper convex optimization problems. Similarly, evaluation of the MAE is very fast, as it only requires evaluating $k$ affine functions and taking the maximum of the $k$ evaluations, which is comparable to the SOSEs. However, as the MAE is a solution to a non-convex optimization problem, this opens the door to a series of complications which are not encountered with the SOSEs such as choosing an appropriate heuristic or initializing in an appropriate manner. 
	
	In terms of quality of prediction, we compare the SOSEs against the MAE and the CLSE in the regime where we advocate for the SOSE, namely $m$ large and $n$ smaller. The train and test RMSEs are given in Table \ref{tab:gen.error}. They are obtained using a dataset generated as explained in Appendix \ref{appendix:specs} with $f=f_1$, $r=1$ for the SOSEs, and varying $m,n,d,k$. The differences between Spect Opt, Rand Opt, and Rand are also explained in Appendix \ref{appendix:specs}; at a high level, they correspond to different initialization techniques for the MAE. The best results in terms of test RMSE are indicated in bold: these are always obtained by the SOSE. This happens even when we select the best performing hyperparameters for the MAE. We provide a similar table with $f=f_2$ in Appendix \ref{subsec:add.exp}.

	\begin{figure}[]
		\begin{subfigure}{\textwidth}
			\footnotesize
			\centering
			\includegraphics[width=0.8\linewidth]{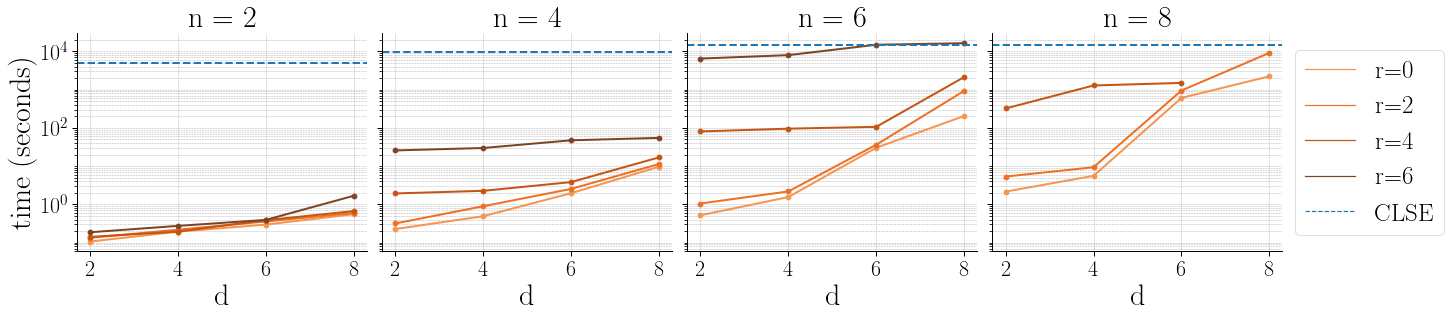}  
			\caption{\footnotesize Solver time of (\ref{eq:opt.tg}) for different $n,d,r$, using $m=10,000$ training samples. The dashed blue line corresponds to the solver time for the CLSE QP (\ref{eq:QP}).}
			\label{fig:runtime_features}
		\end{subfigure}
		\begin{subfigure}{\textwidth}
			\centering
			\includegraphics[width=0.8\linewidth]{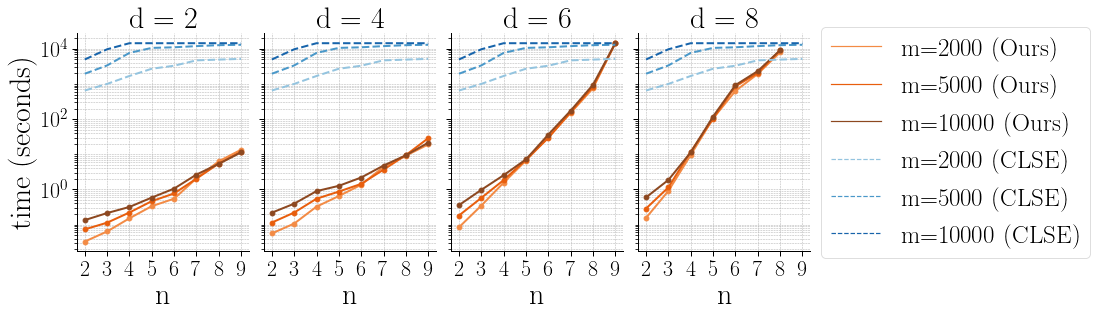}  
			\caption{\footnotesize Solver time needed to compute the CLSE and the SOSE in (\ref{eq:opt.tg}) with respect to $n$. For the SOSE, we take $r=2$ and a range of degrees $d$. Lighter colors correspond to fewer training points. All run-times capped at 4 hours per setup.}
			\label{fig:runtime_degrees}
		\end{subfigure}
		\caption{Runtime comparison of our method and the CLSE.}
		\label{fig:runtime_features_degree}
	\end{figure}
	
	\begin{table}[H]
		\footnotesize
		\centering
		\begin{tabular}{|c|c|c|c|c|>{\columncolor[gray]{0.9}}c|>{\columncolor[gray]{0.9}}c|>{\columncolor[gray]{0.9}}c|c|>{\columncolor[gray]{0.9}}c|c|c|c|>{\columncolor[gray]{0.9}}c|>{\columncolor[gray]{0.9}}c|>{\columncolor[gray]{0.9}}c|}
			\cline{3-16}
			\multicolumn{2}{c|}{~} & \multicolumn{6}{c|}{\textbf{MAE}} & \multicolumn{2}{c|}{\textbf{CLSE}} & \multicolumn{6}{c|}{\textbf{SOSE}}\\ \hline
			\multicolumn{2}{|c|}{RMSE} & \multicolumn{3}{c|}{Train} &   \multicolumn{3}{c|}{\cellcolor[gray]{0.9} Test} &  Train & Test  & \multicolumn{3}{c|}{Train} & \multicolumn{3}{c|}{\cellcolor[gray]{0.9} Test} \\ \hline
			\multirow{2}{*}{$m$} & \multirow{2}{*}{$n$}  & Spect & Rand & ~ & Spect & Rand & ~ & ~ & ~ & \multirow{2}{*}{$d=2$}& \multirow{2}{*}{$d=4$}& \multirow{2}{*}{$d=6$} & ~ & ~ & ~ \\
			~ & ~ & Opt & Opt & \multirow{-2}{*}{Rand}  & Opt & Opt & \multirow{-2}{*}{Rand} & ~ & ~ & ~ & ~ & ~ &\multirow{-2}{*}{$d=2$} & \multirow{-2}{*}{$d=4$} &\multirow{-2}{*}{$d=6$} \\ \hline
			\multirow{5}{*}{2000} & 2 & 0.715 & 0.433 & 0.438 & 0.559 & 0.072 & 0.088 & 0.438 & 0.087 & 0.574 & 0.438 & 0.434 & 0.376 & 0.061 & \textbf{0.041} \\
			& 3 & 0.400 & 0.347 & 0.345 & 0.191 & 0.073 & 0.059 & 0.355 & 0.106 & 0.401 & 0.348 & 0.346 & 0.211 & 0.042 & \textbf{0.050} \\ 
			& 4 & 0.295 & 0.278 & 0.416 & 0.087 & 0.050 & 0.328 & 0.311 & 0.160 & 0.307 & 0.280 & 0.279 & 0.133 & \textbf{0.031} & 0.036 \\
			& 5 & 0.264 & 0.259 & 0.359 & 0.068 & 0.062 & 0.276 & 0.317 & 0.231 & 0.275 & 0.260 & 0.258 & 0.110 & \textbf{0.040} & 0.055 \\
			& 6 & 0.237 & 0.235 & 0.599 & 0.040 & 0.045 & 0.580 & 0.305 & 0.228 & 0.251 & 0.235 & 0.232 & 0.071 & \textbf{0.037} & 0.048 \\ \hline
			\multirow{5}{*}{5000} & 2 & 0.712 & 0.435 & 0.439 & 0.557 & 0.049 & 0.077 & 0.439 & 0.073 & 0.578 & 0.437 & 0.434 & 0.374 & 0.052 & \textbf{0.024} \\ 
			& 3 & 0.405 & 0.352 & 0.350 & 0.190 & 0.043 & 0.037 & 0.373 & 0.128 & 0.408 & 0.352 & 0.352 & 0.167 & 0.021 & \textbf{0.016} \\
			& 4 & 0.303 & 0.290 & 0.392 & 0.084 & 0.039 & 0.286 & 0.336 & 0.184 & 0.321 & 0.291 & 0.290 & 0.132 & 0.098 & \textbf{0.011} \\ 
			& 5 & 0.257 & 0.253 & 0.446 & 0.053 & 0.035 & 0.365 & 0.332 & 0.222 & 0.271 & 0.253 & 0.252 & 0.095 & \textbf{0.013} & 0.017 \\ 
			& 6 & 0.235 & 0.232 & 0.299 & 0.032 & 0.032 & 0.205 & 0.338 & 0.265 & 0.247 & 0.232 & 0.231 & 0.070 & \textbf{0.026} & 0.034 \\ \hline
			\multirow{5}{*}{10000} & 2 & 0.721 & 0.438 & 0.445 & 0.557 & 0.035 & 0.075 & 0.448 & 0.083 & 0.586 & 0.443 & 0.439 & 0.373 & 0.051 & \textbf{0.019} \\ 
			& 3 & 0.405 & 0.349 & 0.349 & 0.191 & 0.034 & 0.032 & 0.376 & 0.134 & 0.411 & 0.351 & 0.350 & 0.211 & 0.075 & \textbf{0.050} \\
			& 4 & 0.300 & 0.289 & 0.332 & 0.085 & 0.029 & 0.183 & 0.347 & 0.197 & 0.320 & 0.289 & 0.288 & 0.131 & 0.011 & \textbf{0.004} \\
			& 5 & 0.260 & 0.255 & 0.489 & 0.054 & 0.030 & 0.445 & 0.358 & 0.252 & 0.274 & 0.254 & 0.253 & 0.110 & 0.032 & \textbf{0.006} \\ 
			& 6 & 0.233 & 0.231 & 0.316 & 0.031 & 0.026 & 0.222 & 0.365 & 0.295 & 0.244 & 0.231 & 0.230 & 0.070 & 0.018 & \textbf{0.002} \\ \hline
		\end{tabular}
		\caption{Comparison of the train and test RMSEs, for different values of $m$ and $n$, of the SOSE $\tg$ computed in (\ref{eq:opt.tg}) (with $r=1$), for the CLSE, and for the MAE under different initializations and choice of hyperparameters; see Appendix \ref{appendix:specs}. Best test RMSE marked in bold font.}
		\label{tab:gen.error}
	\end{table}

	\section{Sum of Squares Approximations and Consistency of the SOSEs} \label{sec:consistency}
	
	In Section \ref{subsec:approx.results}, we present two key results of the paper. While they are used as stepping stones towards our consistency results (Section \ref{subsec:consistency}), they may be of independent interest to the polynomial or algebraic optimization community as analogs of results in \cite{lasserre2007sos} for convex/monotone polynomials~(Section~\ref{subsec:implications}).

	\subsection{Two Algebraic Approximation Results} \label{subsec:approx.results}
	
	We refer to a polynomial $p$ whose Hessian $H_p$ is an sos matrix as \emph{sos-convex} (see, e.g., \cite{helton2010semidefinite}) to a polynomial $p$ whose partial derivatives satisfy $K_i^+-\frac{\partial p}{\partial x_i}$ is sos for $i \in I^+$ and $\frac{\partial p}{\partial x_i}-K_i^-$ is sos for $i \in I^-$ as having \emph{sos-$K$-bounded derivatives.} In the remainder of the paper, we assume that $I^+ \cap I^-=\emptyset$. We show that any polynomial that is convex (resp. that has $K$-bounded derivatives) over $[-1,1]^n$ can be closely approximated by an sos-convex polynomial (resp. sos-$K$-bounded derivative polynomial), obtained by slightly perturbing its higher order terms.

	\begin{theorem} \label{thm:conv}
		Let $g:\mathbb{R}^n \rightarrow \mathbb{R}$ be a polynomial that is convex over $[-1,1]^n$. For $d' \in \mathbb{N}$, define
		$$\Theta_{d'}(x_1,\ldots,x_n)=\sum_{1 \leq i \leq n} x_i^2 + \sum_{1 \leq i < j \leq n} \left(8x_i^{2d'+2} + x_i^{2d'}x_j^2 + x_i^2 x_j^{2d'}+8x_j^{2d'+2}\right).$$
		For any $\epsilon>0$, there exists $d_0'$ such that $g_{\epsilon}\mathrel{\mathop{:}}=g+\epsilon \Theta_{d'}$ is sos-convex for all $d' \geq d_0'$.
	\end{theorem}
	
	\begin{theorem} \label{thm:mon}
		Let $h: \mathbb{R}^n \rightarrow \mathbb{R}$ be a polynomial with $K$-bounded derivatives over $[-1,1]^n$. Let $\rho \in \{-1,0,1\}^n$ with $\rho_i=1$ (resp. $-1$, $0$) if $K_i^-$ (resp. $K_i^+$, neither) is finite. For $d' \in \mathbb{N}$, define
		$$\Xi_{d'}(x_1,\ldots,x_n)=\sum_{1 \leq i \leq n} \rho_ix_i+\sum_{1 \leq i <j \leq n} \left(4\rho_i x_i^{2d'+1}+2 \rho_j x_i^{2d'} x_j +2 \rho_i x_i x_j^{2d'}+4\rho_j x_j^{2d'+1}\right).$$ 
		For any $\epsilon>0$, there exists $d'_0$ such that $h_{\epsilon} \mathrel{\mathop{:}}=h+\epsilon \Xi_{d'}$ has sos-$K$-bounded derivatives for all $d' \geq d_0'$.
	\end{theorem}
	These theorems easily extend to \emph{any} full-dimensional box $B \subset \mathbb{R}^n$. They should be contrasted to the following result, which holds for a polynomial that is nonnegative over $[-1,1]^n$.
	
	\begin{theorem}[Corollary 3.3, \cite{lasserre2007sos}] \label{thm:lasserre}
		Let $f$ be a nonnegative polynomial over $[-1,1]^n$. For $d' \in \mathbb{N}$, define
		$$\Psi_{d'}(x_1,\ldots,x_n)=1+\sum_{i=1}^n x_i^{2d'}.$$
		For any $\epsilon>0$, there exists $d_0'$ such that $f+\epsilon \Psi_{d'}$ is sos for all $d' \geq d_0'$.
	\end{theorem} 
	
	Despite the similarities between Theorem \ref{thm:lasserre} and Theorems \ref{thm:conv} and \ref{thm:mon}, Theorem \ref{thm:lasserre} does \emph{not} straightforwardly imply Theorems \ref{thm:conv} and \ref{thm:mon}. Indeed, if we were to use Theorem~\ref{thm:lasserre} to show Theorem \ref{thm:conv}, we would have to find a polynomial $\Theta_{d'}$ such that $y^TH_{\Theta_{d'}}y=1+\sum_{i=1}^n x_i^{2d'}+\sum_{i=1}^{n} y_i^{2d'}$. This cannot be done as, by construction, the degree of $y_i$ in $y^T H_{\Theta_{d'}}y$ must be 2. We thus need to show another version of Theorem \ref{thm:lasserre} for polynomials of the structure $y^TM(x)y$, where $M(x) $ is a positive semidefinite matrix over $[-1,1]^n$. Even with this new version of Theorem \ref{thm:lasserre} in hand, showing Theorem \ref{thm:conv} is not plain sailing, as it requires us to find a polynomial whose Hessian has a very specific structure, namely diagonal with entries all equal to $1+\sum_{j=1}^n x_j^{2d'}$. Such a polynomial does not exist, so we need to find a way around this difficulty. Likewise, if we were to show Theorem \ref{thm:mon} as an immediate corollary of Theorem \ref{thm:lasserre}, we would need to find a polynomial $\Xi_{d'}$, whose partial derivatives with respect to $x_i$ are all equal to $\rho_i(1+\sum_{j=1}^n x_j^{2d'})$. Again, such a polynomial does not exist, due to the specific structures of partial derivatives. We thus proceed in a different way, starting with Propositions \ref{prop:conv} and \ref{prop:mon}, which are used as intermediate steps for the proofs of Theorems \ref{thm:conv} and \ref{thm:mon}.
	
	\begin{proposition} \label{prop:conv}
		Let $\Theta_{d'}$ be as defined in Theorem \ref{thm:conv}. For any $d'\geq 1$ and any $y=(y_1,\ldots,y_n)^T \in \mathbb{R}^n$,	$y^TH_{\Theta_d'}(x_1,\ldots,x_n)y - \sum_{i=1}^n y_i^2 (1+\sum_{j=1}^n x_j^{2d'})$ is sos.
	\end{proposition}
	
	\begin{proposition} \label{prop:mon}
		Let $\Xi_{d'}$ be as defined in Theorem \ref{thm:mon}. For any $d' \geq 1$ and for any $k=1,\ldots,n$, $ \rho_k \cdot \frac{\partial{\Xi_{d'}(x_1,\ldots,x_n)}}{\partial x_k}- \rho_k^2 \left(1+\sum_{i=1}^n x_i^{2d'}\right)$ is sos. 
	\end{proposition}
	
	These propositions are not a priori evident: the presence of cross-terms in $\Xi_{d'}$ and $\Theta_{d'}$ give rise to terms in the first and second-order derivatives which work against their being sums of squares, and we are requiring them to satisfy an even stronger condition. We now prove these two propositions, which requires the following lemma.
	
	\begin{lemma} \label{lem:matrix}
		Let $n \in \mathbb{N}$ and let $a_1> a_n>0$. Define $a_i=a_1 \cdot \left(\frac{a_n}{a_1}\right)^{\frac{i-1}{n-1}}>~0$ for $i=2,\ldots,n-1$. Let $M$ be the $n\times n$ symmetric matrix which is all zeros except for the entries
		$M_{i,i}=\frac{a_{i}}{2}$ for $i=1,2,n-1,n;$ $M_{i,i}=a_i$ for $i=3,\ldots,n-2;$ and $M_{(i-1),(i+1)}=M_{(i+1),(i-1)}=-\frac{a_i}{2}$ for $i=2,\ldots,n-1$.
		In other words, 
		\begin{align*}
		\footnotesize
		M=\begin{pmatrix} \frac{a_1}{2} & 0 & -\frac{a_2}{2} & 0 & \ldots \ldots & 0 & 0 & 0 & 0\\
		0 & \frac{a_2}{2} & 0 & -\frac{a_3}{2} & \ldots & 0 & 0 & 0 & 0\\
		-\frac{a_2}{2} & 0 & a_3 & 0 & \ldots & 0 & 0 & 0 & 0\\
		0 & -\frac{a_3}{2} & 0 & a_4 & \ldots & 0 & 0 & 0 & 0\\
		\vdots &\vdots & \vdots & \vdots & \ddots & \vdots & \vdots & \vdots & \vdots \\
		0 & 0 & 0 & 0 & \ldots & a_{n-3} & 0 & -\frac{a_{n-2}}{2} & 0 \\
		0 & 0 & 0 & 0 & \ldots & 0 & a_{n-2} & 0 & -\frac{a_{n-1}}{2}\\
		0 & 0 & 0 & 0 & \ldots & -\frac{a_{n-2}}{2}& 0 & \frac{a_{n-1}}{2} & 0\\
		0 & 0 & 0 & 0& \ldots & 0 & -\frac{a_{n-1}}{2} & 0 & \frac{a_n}{2}
		\end{pmatrix}.
		\end{align*}
		We have that $M$ is positive semidefinite. 
	\end{lemma}
\begin{proof}
	For $i=2,\ldots,n-1$, we define the $n \times n$ matrices $M^{(i)}$ to be all zeros except for four entries
	$M^{(i)}_{i-1,i-1}=\frac{a_{i-1}}{2}, M^{(i)}_{i+1,i+1}=\frac{a_{i+1}}{2},  M^{(i)}_{i-1,i+1}=M^{(i)}_{i+1,i-1}=-\frac{a_{i}}{2}.$
	Note that $\sum_{i=2}^{n-1} M^{(i)}=M$. We show that $M^{(i)}$, $i=2,\ldots,n-1$ is positive semidefinite, which proves the result. As $a_i>0$ for $i=1,\ldots,n,$ we have that 
	\begin{align*}
	M^{(i)} \succeq 0 \Leftrightarrow \begin{pmatrix} \frac{a_{i-1}}{2} & -\frac{a_i}{2} \\ -\frac{a_i}{2} & \frac{a_{i+1}}{2}\end{pmatrix} \succeq 0 \Leftrightarrow \frac{a_{i-1} \cdot a_{i+1}}{4} - \frac{a_i^2}{4}\geq 0.
	\end{align*}
	The result follows as we have $\frac{a_{i-1} \cdot a_{i+1}}{4}-\frac{a_i^2}{4}=\frac{a_1^2}{4} \cdot \left( \left( \frac{a_n}{a_1} \right)^{(i-1+i+1)/(n-1)} -\left( \frac{a_n}{a_1} \right)^{(2i)/(n-1)} \right)= 0.$
\end{proof}
	\begin{proof}[Proof of Proposition \ref{prop:conv}.] Let $d' \in \mathbb{N}$.
	For simplicity, we use $H$ for the Hessian $H_{\Theta_{d'}}$ of $\Theta_{d'}$. We have that 
	$y^THy=2\sum_{1 \leq k \leq n} y_k^2+\sum_{1 \leq k < \ell \leq n} (y_k^2 \tilde{H}_{kk} +2y_ky_{\ell}H_{kl}+y_{\ell}^2\tilde{H}_{\ell \ell}),$
	where
	\begin{align*}
	\tilde{H}_{kk}&=\frac{\partial^2(\Theta_{d'}(x)-\sum_{i=1}^n x_i^2)}{\partial x_{k}^2}= 8(2d'+2)(2d'+1)x_k^{2d'}+2d'(2d'-1)x_k^{2d'-2} x_{\ell}^2+2x_{\ell}^{2d'},\text{ for } k=1,\ldots,n\\
	H_{kl}&=\frac{\partial^2(\Theta_{d'}(x)-\sum_{i=1}^n x_i^2)}{\partial x_k \partial x_{\ell}}=\frac{\partial^2 \Theta_{d'}(x)}{\partial x_k \partial x_{\ell}}=4d' x_k^{2d'-1} x_{\ell}+4d' x_k x_{\ell}^{2d'-1} \text{ for } k, \ell=1,\ldots,n, ~k \neq \ell.
	\end{align*}
	
	We show that 
	$y^THy - 2\sum_{k=1}^n y_k^2 - \sum_{1 \leq k < \ell \leq n} \left( 16y_k^2 x_k^{2d'}+y_k^2 x_{\ell}^{2d'}+y_{\ell}^2 x_k^{2d'}+16y_{\ell}^2 x_{\ell}^{2d'}\right) $ is sos. This implies the result. To do so, we define, for $1 \leq k <\ell \leq n$,
	\begin{align*}
	q_{k\ell}(x_k, x_{\ell},y_k,y_{\ell})&\mathrel{\mathop{:}}= y_k^2\tilde{H}_{kk} +2y_ky_{\ell}H_{k \ell}+y_{\ell}^2 \tilde{H}_{\ell \ell}
	-16y_k^2x_k^{2d'}- y_k^2 x_{\ell}^{2d'}-y_{\ell}^2x_{k}^{2d'}-16y_{\ell}^2 x_{\ell}^{2d'}\\
	&=8d'(4d'+6)x_k^{2d'}y_k^2+2d'(2d'-1)x_k^{2d'-2}x_{\ell}^2y_k^2+x_{\ell}^{2d'}y_k^2+8d'(4d'+6)x_{\ell}^{2d'}y_{\ell}^2 \\
	&+2d'(2d'-1)x_{\ell}^{2d'-2} x_{k}^2 y_{\ell}^2 +x_{k}^{2d'} y_{\ell}^2+ 8d' x_k^{2d'-1} x_{\ell}y_k y_{\ell}+8d' x_k x_{\ell}^{2d'-1}y_{k}y_{\ell}
	\end{align*}
	and show that $q_{k\ell}$ is a sum of squares. More specifically, we show that there exists a positive semidefinite $Q \in \mathbb{R}^{(2d'+2) \times (2d'+2)}$ such that $q_{k\ell}(x_k, x_{\ell},y_k,y_{\ell})=w_{d'}(x,y)^T Q w_{d'}(x,y)$, where $w_{d'}(x,y)=\begin{bmatrix} y_k \cdot z_{d'}(x_k,x_{\ell}) \\ y_{\ell} \cdot z_{d'}(x_{\ell},x_k) \end{bmatrix}$ and $z_{d'}(x_k,x_{\ell})=(x_k^{d'}, x_k^{d'-1}x_{\ell}, \ldots, x_k x_{\ell}^{d'-1}, x_{\ell}^{d'})^T$ is a vector of monomials of size $d'+1$. Let $M$ be as in Lemma \ref{lem:matrix} with $n=d'$, $a_1=2d'(2d'-1)$, $a_{d'}=1$ and consider the following $(2d'+2) \times (2d'+2)$ matrices
	\begin{align*}
	\footnotesize{
		\tilde{M}=\left(\begin{array}{c|c}
		\begin{matrix}
		0 & \ldots & 0 \\
		\vdots & M & \\
		0 & & &
		\end{matrix}
		& \bigzero \\
		\hline
		\bigzero &
		\begin{matrix}
		0 & \ldots & 0 \\
		\vdots & M & \\
		0 & & &
		\end{matrix}
		\end{array}\right),
		\quad
		\tilde{P}=\left(\begin{array}{c|c}
		D
		& 0 \\
		\hline
		0 &
		D
		\end{array}\right),
	}
	\end{align*}
	where $D=\mbox{diag}\left(0,\frac{a_1}{2},\frac{a_2}{2},0, \ldots, 0, \frac{a_{d'}}{2}\right).$
	Note that $\tilde{M} \succeq 0$ by virtue of Lemma \ref{lem:matrix} and $\tilde{P} \succeq 0$ by definition of $\{a_i\}$. We further define the following $(2d'+2) \times (2d'+2)$ matrices:
	\begin{align*}
	\footnotesize{
		\tilde{N}_1=\left(\begin{array}{c|c}
		\begin{matrix}
		8d'(4d'+6) & 0 & \ldots &0 \\
		0 & 0 & \ldots & 0 \\
		\vdots & \vdots & \ddots & \vdots\\
		0 & 0 & \ldots &0
		\end{matrix}
		&
		\begin{matrix}
		0 & \ldots & 4d' &0 \\
		0 & \ldots & 0  & 0 \\
		\vdots & \ddots & \vdots  & \vdots\\
		0 & \ldots & 0 &0
		\end{matrix}\\
		\hline
		\begin{matrix}
		0 & 0 & \ldots &0 \\
		\vdots & \vdots & \ddots & \vdots\\
		4d' & 0 & \ldots & 0 \\
		0 & 0 & \ldots &0
		\end{matrix}
		&
		\begin{matrix}
		0 & \ldots & 0 & 0 \\
		\vdots & \ddots & \vdots & \vdots\\
		0 & \ldots & \frac{a_{d'-1}}{2} & 0 \\
		0 & \ldots  & 0 & 0
		\end{matrix}
		\end{array}\right), \quad
		\tilde{N}_2=\left(\begin{array}{c|c}
		\begin{matrix}
		0 & \ldots & 0 & 0 \\
		\vdots & \ddots & \vdots & \vdots\\
		0 & \ldots & \frac{a_{d'-1}}{2} & 0 \\
		0 & \ldots  & 0 & 0
		\end{matrix}
		&
		\begin{matrix}
		0 & 0 & \ldots &0 \\
		\vdots & \vdots & \ddots & \vdots\\
		4d' & 0 & \ldots & 0 \\
		0 & 0 & \ldots &0
		\end{matrix} \\
		\hline
		\begin{matrix}
		0 & \ldots & 4d' &0 \\
		0 & \ldots & 0  & 0 \\
		\vdots & \ddots & \vdots  & \vdots\\
		0 & \ldots & 0 &0
		\end{matrix}
		&
		\begin{matrix}
		8d'(4d'+6) & 0 & \ldots &0 \\
		0 & 0 & \ldots & 0 \\
		\vdots & \vdots & \ddots & \vdots\\
		0 & 0 & \ldots &0
		\end{matrix}
		\end{array}\right)
	}
	\end{align*}
	We also have that $\tilde{N}_1 \succeq 0$ and $\tilde{N}_2 \succeq 0$. Indeed, $a_{d'-1} >a_{d'}=1$ and
	$$\tilde{N}_1, \tilde{N}_2 \succeq 0 \Leftrightarrow \begin{pmatrix} 8d'(4d'+6) & 4d' \\ 4d' & \frac{a_{d'-1}}{2}  \end{pmatrix} \succeq 0 \Leftrightarrow 8d'(4d'+6) \cdot \frac{a_{d'-1}}{2} - 16d'^2> 16d'^2 -16d'^2=0.$$
	Now, take $Q=\tilde{M}+\tilde{P}+\tilde{N}_1+ \tilde{N}_2$. We have that $Q \succeq 0$ and that $q_{k\ell}(x_{k}, x_{\ell}, y_k, y_{\ell})=w_{d'}(x,y)^T Qw_{d'}(x,y)$. Thus $q_{k \ell}$ is sos and we can conclude. 
	\end{proof}
	
	\begin{proof}[Proof of Proposition \ref{prop:mon}.]
	Let $d' \in \mathbb{N}$ and $k \in \{1,\ldots,n\}$. We have that
	$$\frac{\partial \Xi_{d'}(x_1,\ldots,x_n)}{\partial x_k} = \rho_k^2+ \sum_{\ell \neq k} \left(4(2d'+1) \rho_k^2 x_k^{2d'}+2(2d') \rho_{\ell} \rho_k x_{k}^{2d'-1} x_{\ell} +2 \rho_{k}^2 x_{\ell}^{2d'}\right)$$ and we show that $$\frac{\partial \Xi_{d'}(x_1,\ldots,x_n)}{\partial x_k} -\rho_k^2 -\rho_k^2 \sum_{\ell \neq k}(4x_k^{2d'}+x_{\ell}^{2d'})$$ is a sum of squares. This implies the result. Note that in the case where $\rho_k=0$, this trivially holds. We now assume that $\rho_k=\pm 1$ (and so $\rho_k^2=1$) and define for $\ell \neq k$, 
	$$r_{kl}(x_k,x_{\ell}) \mathrel{\mathop{:}}=4(2d'+1) x_{k}^{2d'}+4d' \rho_{\ell} \rho_k x_k^{2d'-1}x_{\ell}+2x_{\ell}^{2d'}-4x_k^{2d'}-x_{\ell}^{2d'}=8d' x_{k}^{2d'}+4d' \rho_k \rho_{\ell} x_k^{2d'-1}x_{\ell}+x_{\ell}^{2d'}.$$
	We show that $r_{kl}(x_k,x_{\ell})$ is sos in $(x_k,x_{\ell})$ to conclude. Note that when $d'=1$, 
	$r_{kl}(x_k,x_{\ell})=8x_k^2 \pm 4x_kx_{\ell}+x_{\ell}^2=8\left(x_k \pm \frac14 x_{\ell}\right)^2+\frac12 x_{\ell}^2.$
	We now focus on $d' \geq 2$. As before, we let $z_{d'}(x_k,x_{\ell})=(x_{k}^{d'}, x_{k}^{d'-1}x_{\ell}, \ldots, x_{k}x_{\ell}^{d'-1}, x_{\ell}^{d'})^T$ be a vector of monomials of size $d'+1$. We consider $M$ as defined in Lemma \ref{lem:matrix} with $n=d'+1$, $a_1=8d'$, $a_{d'+1}=1$, and hence $a_i=8d' \cdot \left(\frac{1}{8d'} \right)^{(i-1)/d'}$ for $i=1,\ldots,d'$. As established in Lemma \ref{lem:matrix}, we have that $M \succeq 0$. Furthermore, we define 
	\begin{align*}
	\footnotesize{
		P=\begin{pmatrix} 4d' & 2d' & 0 & \ldots & 0 & 0 & 0\\ 
		2d' & \frac{a_2}{2} & 0 & \ldots & 0 & 0 & 0\\
		0 & 0 & 0 & \ldots & 0 & 0 &0 \\
		\vdots & \vdots & \vdots & \ddots &\vdots & \vdots & \vdots \\
		0 & 0 & 0 & \ldots & 0 & 0 & 0\\
		0 & 0 & 0 & \ldots & 0 & \frac{a_{d'}}{2} & 0\\
		0 & 0 & 0 & \ldots & 0 & 0 & \frac{a_{d'+1}}{2}
		\end{pmatrix}
	}
	\end{align*}
	
	We have that $P \succeq 0$. Indeed, $\frac{a_{d'}}{2}, \frac{a_{d'+1}}{2}, \frac{a_2}{2}>0$, and 
	\begin{align*}
	\begin{pmatrix} 4d' & 2d' \\ 2d' & \frac{a_2}{2} \end{pmatrix} \succeq 0 \Leftrightarrow 4d' \cdot \frac{a_2}{2} -4d'^2=4d'\cdot 4d' \left(\frac{1}{8d'} \right)^{1/d'}-4d'^2=4d'^2 \left( 4 \left( \frac{1}{8d'}\right)^{1/d'}-1 \right) \geq 0 \text{ for } d' \geq 2.
	\end{align*}
	Thus, $M+P \succeq 0$. Now, note that $z_{d'}(x_k,x_\ell)^T (M+P) z_{d'}(x_k,x_\ell)=r_{k \ell} (x_k,x_{\ell})$, and is thus sos.
	%
\end{proof}
	
	We give a lemma before proving Theorems \ref{thm:conv} and \ref{thm:mon}. Its proof is given in Appendix \ref{appdx:laurent_adjacent} for completeness and follows quite straightforwardly from the proof of \cite[Theorem 7.2]{laurent2009sums}.
	
	\begin{lemma} \label{lem:laurent_adjacent}
		Let $M(x_1,\ldots,x_n)$ be a $n \times n$ polynomial matrix which is positive semidefinite for all $x \in [-1,1]^n$ and let $y \in \mathbb{R}^n$. For $d' \in \mathbb{N}$, define $\Phi_{d'}(x,y)=\sum_{i=1}^n y_i^2 (1+\sum_{j=1}^n x_j^{2d'})$. For any $\epsilon>0$, there exists $d_0'$ such that $y^TM(x)y+\epsilon \Phi_{d'}$ is sos for all $d' \geq d_0'$.
	\end{lemma}
	
	\begin{proof}[Proof of Theorem \ref{thm:conv}.]
	Let $\epsilon>0$. From Lemma \ref{lem:laurent_adjacent}, as $g$ is convex over $[-1,1]^n$, there exists $d_0'$ such that $y^TH_g(x)y+\epsilon \Phi_{d'}$ is sos.  Combining this with Proposition \ref{prop:conv}, and the fact that $y^TH_{g_{\epsilon}}y=y^T H_g(x)y+ \epsilon \Phi_{d'}(x,y)+\epsilon \left( y^TH_{\Theta_{d'}}y- \Phi_{d'}(x,y) \right)$, we get that $y^TH_{g_{\epsilon}}y$ is sos for $d' \geq d_0'$.
	\end{proof}
	
	\begin{proof}[Proof of Theorem \ref{thm:mon}.] Let $\epsilon>0$ and $i \in I^+ \cup I^-$. From Theorem \ref{thm:lasserre}, as $h$ has $K$-bounded derivatives over $[-1,1]^n$, i.e., $\rho_i \cdot \left( \frac{\partial h(x)}{\partial x_i}-K_i^{\pm}\right) \geq 0$, for all $x \in [-1,1]^n$, there exists $d_0'$ such that $\rho_i \cdot \left( \frac{\partial h(x)}{\partial x_i}-K_i^{\pm}\right)+\epsilon \Psi_{d'}$ is sos for $d' \geq d_{0}'$. Combining this with Proposition \ref{prop:mon} and the fact that
	$$\rho_i \cdot \left( \frac{\partial h_{\epsilon}(x)}{\partial x_i}-K_i^{\pm}\right)=\rho_i \cdot \left( \frac{\partial h(x)}{\partial x_i}-K_i^{\pm}\right)+\epsilon \Psi_{d'}+\epsilon \left( \rho_i  \frac{\partial \Xi_{d'}(x)}{\partial x_i} - \Psi_{d'} \right),$$
	we get that  $\frac{\partial h_{\epsilon}(x)}{\partial x_i}-K_i^-$ (resp. $K^+_i-\frac{\partial h_{\epsilon}(x)}{\partial x_i}$) is sos for $i \in I^-$ (resp. $i \in I^+$).
	\end{proof}

	\subsection{Implications of the Approximation Results Beyond Shape-Constrained Regression} \label{subsec:implications}
	
	Theorems \ref{thm:conv} and \ref{thm:mon} may be of broader interest when reinterpreted as algebraic density results over the set of convex and monotone polynomials. Given a vector $\rho \in \{-1,0,1\}^n$, we say that a polynomial $f(x) \mathrel{\mathop{:}}=f(x_1,\ldots,x_n)$ is $\rho$-monotone if $\rho_i \cdot \frac{\partial f(x)}{\partial x_i} \geq 0$ for $i=1,\ldots,n$, and $\rho$-sos-monotone if $\rho_i \cdot \frac{\partial f(x)}{\partial x_i}$ is sos for $i=1,\ldots,n.$ Furthermore, for a polynomial $f$, we use $||f||_1$ for the $l_1$-norm of its coefficients.
	\begin{corollary}\label{cor:Lasserre}
		The following statements hold: 
		\begin{enumerate}[(i)]
			\item Let $f:\mathbb{R}^n \rightarrow \mathbb{R}$ be a polynomial that is convex over $[-1,1]^n$. There exists a sequence $\{f_{\epsilon}\}_{\epsilon}$ of sos-convex polynomials such that $||f-f_{\epsilon}||_{1} \rightarrow 0$ when $\epsilon \rightarrow 0$.
			\item Let $\rho \in \{-1,0,1\}^n$ and let $f:\mathbb{R}^n \rightarrow \mathbb{R}$ be a $\rho$-monotone polynomial over $[-1,1]^n$. There exists a sequence $\{f_{\epsilon}\}_{\epsilon}$ of $\rho$-sos-monotone polynomials such that $||f-f_{\epsilon}||_{1} \rightarrow 0$ when $\epsilon \rightarrow 0$.
		\end{enumerate}
	\end{corollary}
	
	\begin{proof}
	An explicit construction of such sequences of polynomials is obtained immediately by taking $f_{\epsilon}=f+\epsilon \Theta_{d'}$ for (i) and $f_{\epsilon}=f+\epsilon \Xi_{d'}$ for (ii), and noting that the coefficients of $\Theta_{d'}$,$\Xi_{d'}$, and thus $||\Theta_{d'}||_1$,$||\Xi_{d'}||_1$ do not depend on $\epsilon$ nor $d'$. 	
	\end{proof}
	
	Corollary \ref{cor:Lasserre} easily extends to any full-dimensional box $B \subset \mathbb{R}^n$. It states that sos-convex (resp. sos-monotone) polynomials are dense in the set of convex (resp. monotone) polynomials over $[-1,1]^n$ in the $l_1$ norm of coefficients, provided that we allow the degree of the sos-convex (resp. sos-monotone) polynomials to grow. It adds to a line of research studying the gap between convex and sos-convex polynomials (see, e.g., \cite{ahmadi2013complete,helton2010semidefinite}) and can be viewed as an analog---for convexity and monotonicity---to a result in \cite{lasserre2007sos} showing that the set of sos polynomials is dense in the set of nonnegative polynomials on $[-1,1]^n$. Corollary~\ref{cor:Lasserre} also holds if convexity and $\rho$-monotonicity are global (rather than over a box). It can thus be viewed as an analog of a result in \cite{lasserre2007sum} as well. Similarly to these papers, we propose simple and explicit expressions of sequences $\{f_{\epsilon}\}$ that approximate $f$, and these are obtained by adding a small perturbation to $f$ with high-degree terms. Importantly, the sequences $\{f_{\epsilon}\}$ built in our case are \emph{not} those given in the aforementioned papers (that is, they do not involve $\Psi_{d'}$, but $\Theta_{d'}$ and $\Xi_{d'}$). As explained before, the structure of Hessians and gradients precludes us from using $\Psi_{d'}$. For the convex case, these positive results regarding approximation of convex polynomials by sos-convex polynomials are the counterpart of existing negative results when the degree of the sos-convex polynomials is fixed. In that case, it can be shown as a consequence of \cite{blekherman2006there} that there are many more convex polynomials than there are sos-convex polynomials.


\subsection{Consistency of the SOSEs} \label{subsec:consistency}

In this section, we make three statistical assumptions on the way $(X_i,Y_i)_{i=1,\ldots,m}$ are generated, which are standard for consistency results.

\begin{assumption}\label{assmpt:generation.X}
	The vectors $X_1,\ldots,X_m \in \mathbb{R}^n$ are randomly generated and are independently and identically distributed (iid) with $E[||X_1||^2]<\infty$.
\end{assumption}

\begin{assumption}\label{assmpt:box}
	The support of the random vectors $X_1$, $\ldots$, $X_m$ is a full-dimensional box $B \subseteq \mathbb{R}^n$ as in (\ref{eq:box}), i.e., $P(X_i \in B)=1$. Furthermore, for any full-dimensional set $C \subseteq B$, $P(X_i \in C)>0$.
\end{assumption}

\begin{assumption}\label{assmpt:generation.Y}
	There exists a continuous function $f:B \rightarrow \mathbb{R}$ such that 
	$Y_i=f(X_i)+\nu_i$ for all $i=1,\ldots,m,$
	where $\nu_i$ are random variables with support $\mathbb{R}$ and the following characteristics:
	\begin{align*}
	&P(\nu_1 \in dz_1,\ldots, \nu_m \in dz_n|X_1,\ldots,X_m)=\prod_{i=1}^m P(\nu_i \in dz_i|X_i), \forall z_1,\ldots ,z_m \in \mathbb{R}, \\
	&E[\nu_i|X_i]=0 \text{ almost surely (a.s.), } \forall i=1,\ldots,m, \quad E[\nu_i^2]=\mathrel{\mathop{:}}\sigma^2<\infty~\forall i=1,\ldots,m.
	\end{align*}
\end{assumption}
Assumptions \ref{assmpt:generation.X} and \ref{assmpt:generation.Y} imply that the sequence $\{(X_i,Y_i)\}_{i=1,\ldots,m}$ is iid, that $E[\nu_1]=0$, and that $E[Y_1^2]<\infty$. Using these three assumptions, we show \emph{consistency} of the SOSEs. This is a key property of estimators stating that, as the number of observations grows, we are able to recover $f$.

\begin{theorem} \label{th:gmdr.consistent}
	Let $C$ be any closed full-dimensional subset of $B$ such that no point on the boundary of $B$ is in $C$. 
	Assuming that $f$ is twice continuously differentiable and convex over $B$, that $\tg$ is as defined in (\ref{eq:opt.tg}), and that Assumptions 1 through 3 hold, we have, for any fixed $r \in \mathbb{N}$,
	\begin{align}
	\sup_{x \in C} |\tg(x)-f(x)| \rightarrow 0 \text{ almost surely (a.s.) as } d, m\mathrel{\mathop{:}}=m(d) \rightarrow \infty,
	\end{align}
	i.e., $P(\lim_{d,m \rightarrow \infty} \sup_{x \in C} |\tg(x)-f(x)| \rightarrow 0)=1$.
\end{theorem}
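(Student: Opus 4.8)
The plan is to decompose the analysis into an \emph{approximation} part, controlling how well the feasible set of (\ref{eq:opt.tg}) can reproduce $f$ once $d$ and $r$ are large, and an \emph{estimation} part, controlling the gap between the empirical least-squares criterion and its population version once $m$ is large. A useful reduction upfront is that it suffices to establish $L^2(P_X)$-consistency, i.e. $E\big[(\tg(X_1)-f(X_1))^2\big]\to 0$ a.s. Indeed, since $C$ is compact and disjoint from $\partial B$, we have $C\subseteq\operatorname{int}(B)$; each $\tg$ is a polynomial, hence finite on the open convex set $\operatorname{int}(B)$ and convex there by construction, while $f$ is continuous and convex on $\operatorname{int}(B)$. $L^2(P_X)$-convergence implies convergence in $P_X$-measure, so along any subsequence there is a further subsequence on which $\tg\to f$ $P_X$-a.e.; by Assumption \ref{assmpt:box} the corresponding full-measure set is dense in $\operatorname{int}(B)$, and the classical fact that a sequence of finite convex functions converging pointwise on a dense subset of an open convex set converges uniformly on every compact subset (Rockafellar) yields $\sup_{x\in C}|\tg(x)-f(x)|\to 0$ along that further subsequence. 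Since every subsequence admits such a further subsequence, the full sequence converges; the convexity of $\tg$ together with $C\cap\partial B=\emptyset$ is exactly what lets us upgrade an integrated bound to a uniform one.

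\emph{Approximation step.} Fix $\epsilon>0$. Since $f$ is twice continuously differentiable on $B$, its multivariate Bernstein polynomials $q_N$ (or any mollifier-based approximant) converge to $f$ in $C^2$-norm on $B$; in particular $H_{q_N}\to H_f\succeq 0$ uniformly on $B$ (using convexity of $f$, Definition \ref{def:cvx}), so for $N$ large $H_{q_N}(x)\succeq-\epsilon I$ on $B$. Put $p_\epsilon(x):=q_N(x)+\epsilon\|x\|^2$, a polynomial of degree $d_\epsilon:=\max(N,2)$; then $H_{p_\epsilon}(x)\succeq\epsilon I\succ 0$ on $B$ and $\sup_{x\in B}|p_\epsilon(x)-f(x)|\le\epsilon\big(1+\max_{x\in B}\|x\|^2\big)=:c_1\epsilon$ for $N$ large. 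Using the representation (\ref{eq:box.2}) of $B$ and applying Theorem \ref{thm:scherer.hol} to the symmetric polynomial matrix $H_{p_\epsilon}$, which is positive definite on $B$, we obtain sos matrices $S_0,\dots,S_n$ of some degree $2r_\epsilon$ with $H_{p_\epsilon}=S_0+\sum_{i=1}^n g_iS_i$; hence $p_\epsilon$ is feasible for (\ref{eq:opt.tg}) at parameters $d=d_\epsilon$, $r=r_\epsilon$. (This is why we need not route the argument through $\bg$ and Theorem \ref{th:gmdr.lemma}, although one could.)

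\emph{Estimation step.} Fix $\epsilon$ and write $d=d_\epsilon$, $r=r_\epsilon$, $L_m(g):=\tfrac1m\sum_{i=1}^m(Y_i-g(X_i))^2$, $L(g):=E[(Y_1-g(X_1))^2]$; Assumption \ref{assmpt:generation.Y} and $E[\nu_1\mid X_1]=0$ give $L(g)=E[(f(X_1)-g(X_1))^2]+\sigma^2$. I would first bound the coefficient vector of $\tg$: the constant polynomial $\bar Y_m:=\tfrac1m\sum_iY_i$ is feasible for (\ref{eq:opt.tg}) at any parameters, so $L_m(\tg)\le L_m(\bar Y_m)\to\operatorname{Var}(Y_1)<\infty$ a.s., whence $\tfrac1m\sum_i\tg(X_i)^2$ is a.s. bounded for $m$ large; writing $\tg=c^\top z_d$ this equals $c^\top\!\big(\tfrac1m\sum_iz_d(X_i)z_d(X_i)^\top\big)c$, and since a nonzero polynomial cannot vanish on a set of positive Lebesgue measure, Assumption \ref{assmpt:box} forces $E[z_d(X_1)z_d(X_1)^\top]\succ0$; by the SLLN the empirical second-moment matrix has smallest eigenvalue bounded away from $0$ for $m$ large, giving $\|c\|\le R$ for some $R=R(\epsilon)$ a.s. eventually. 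On the compact ball $\{c:\|c\|\le R\}$ the functions $(x,y)\mapsto(y-c^\top z_d(x))^2$ are continuous in $c$ and dominated by an $L^1(P)$ envelope (using $E[Y_1^2]<\infty$ and compactness of $B$), so a uniform strong law gives $\sup_{\|c\|\le R}|L_m(c^\top z_d)-L(c^\top z_d)|\to 0$ a.s. Chaining, $L(\tg)\le L_m(\tg)+o(1)\le L_m(p_\epsilon)+o(1)\to L(p_\epsilon)\le c_1^2\epsilon^2+\sigma^2$ a.s., hence $\limsup_mE[(f(X_1)-\tg(X_1))^2]\le c_1^2\epsilon^2$ a.s. Finally, a Borel--Cantelli/diagonal argument over $\epsilon_k\downarrow 0$ produces deterministic schedules $d(m),r(m)\to\infty$ (growing slowly enough) along which $E[(\tilde g_{m,d(m),r(m)}(X_1)-f(X_1))^2]\to 0$ a.s. as $m\to\infty$; this is the sense in which the triple limit is taken. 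Combined with the reduction of the first paragraph, this proves the theorem.

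\emph{Main obstacle.} The two delicate points are the a priori coefficient bound---which genuinely needs Assumption \ref{assmpt:box} to prevent a polynomial from hiding its mass where $P_X$ is light, and whose constant $R(\epsilon)$ deteriorates as $d_\epsilon$ grows, forcing the coupling of $m$'s growth rate to $d$ and $r$ rather than a free triple limit---and the passage from $L^2(P_X)$- to uniform-on-$C$ convergence, which relies essentially on the convexity of $\tg$ and on $C\cap\partial B=\emptyset$, without which no integrated-to-sup upgrade is available.
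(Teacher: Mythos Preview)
Your proof is correct but follows a genuinely different route from the paper's. The paper proves Theorem~\ref{th:gmdr.consistent} by a triangle-inequality decomposition through the \emph{exactly} constrained estimator $\bg$: first Theorem~\ref{th:gmdr.lemma} gives $\sup_{B}|\tg-\bg|\to 0$ as $r\to\infty$ for fixed $d,m$, and then Lemma~\ref{lem:consistent.g} gives $\sup_{C}|\bg-f|\to 0$ as $d,m\to\infty$. The latter is proved in Appendix~\ref{appendix:consistency} by adapting the Lim--Glynn argument for the CLSE: a Weierstrass-type result for convex polynomials (Proposition~\ref{prop:approx.gd}), a priori uniform bounds and Lipschitz constants for $\bg$ obtained by a geometric argument placing sample points near the corners of $B$ (Proposition~\ref{prop:prop.gd}), an Ascoli--Arzel\`a compactness/$\epsilon$-net step to replace $\bg$ by a deterministic approximant, and finally a direct sup-norm upgrade using the Lipschitz bounds.

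You bypass $\bg$ entirely and instead exploit that, at each fixed $(d,r)$, the hypothesis class is finite-dimensional: a moment-matrix argument (needing only Assumption~\ref{assmpt:box}) bounds the coefficient vector of $\tg$, a standard parametric uniform SLLN over a compact ball then controls $L(\tg)-L_m(\tg)$, and Scherer--Hol feasibility of a strictly convex perturbation $p_\epsilon$ handles the approximation side without passing through $\bg$. The sup-norm conclusion is then recovered from $L^2(P_X)$-consistency via Rockafellar's theorem on pointwise convergence of convex functions on a dense set. This is arguably more elementary on the statistical side---it avoids the corner-point construction of Proposition~\ref{prop:prop.gd} and the Ascoli--Arzel\`a machinery---and it makes the coupling of $m$ with $(d,r)$ explicit, whereas the paper's one-line ``triangle inequality'' leaves the order of the triple limit implicit. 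The cost is that your Rockafellar upgrade is specific to convexity: for Theorem~\ref{th:hmdr.consistent} you would need a different passage from $L^2$ to sup-norm (e.g.\ via the built-in Lipschitz bound from $K$-bounded derivatives), whereas the paper's Lipschitz-based Step~3 handles both shape constraints uniformly. Your diagonalization to deterministic schedules $d(m),r(m)$ via Borel--Cantelli is the right idea but would need to be spelled out, since the threshold $M_k$ after which the $\epsilon_k$-bound holds is random.
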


\begin{theorem}\label{th:hmdr.consistent}
	Let $C$ be any closed full-dimensional subset of $B$ such that no point on the boundary of $B$ is in $C$. Assume that $f$ has $K$-bounded derivatives over $B$ with $I^+ \cap I^-=\emptyset$, that $\thh$ is as defined in (\ref{eq:opt.th}), and that Assumptions 1 through 3 hold, we have, for any fixed $r \in \mathbb{N},$
	\begin{align}
	\sup_{x \in C} |\thh(x)-f(x)| \rightarrow 0 \text{ a.s. as } d, m\mathrel{\mathop{:}}=m(d)\rightarrow \infty.
	\end{align}
\end{theorem}

\begin{remark}
	Under our assumptions, $d$ and $m$ need to go to infinity for consistency to hold and it is quite clear that these assumptions cannot be dropped. (The same assumptions are needed to show consistency of, e.g., unconstrained polynomial regression.) If we allow $r \rightarrow \infty$, it is easier to show consistency, as we are then able to leverage certain Positivstellens\"atze \cite{putinar1993positive,scherer2006matrix}. As $r$ is fixed in Theorems \ref{th:gmdr.consistent} and \ref{th:hmdr.consistent}, we do not make use of such results here. Note that one can fix $r$ to any value, including $r=0$, and still obtain consistency as long as $m,d \rightarrow \infty$. In practice, we choose $d$ and $r$ via cross-validation (see Section \ref{subsec:sose.computation}) as it is not clear that $r=0$ is preferable to larger $r$ in terms of generalization error.
\end{remark}


\begin{remark}
	One could extend these theorems to the box $B$ itself, provided that we make assumptions on the sampling of the pairs of points $(X_i,Y_i)_{i=1,\ldots,m}$ on the boundary of $B$.
\end{remark}

Theorems \ref{th:gmdr.consistent} and \ref{th:hmdr.consistent} rely on three propositions and require the introduction of two sets of polynomials $g_d$, $h_d$ and $\bg$, $\bh$. Let $C_{n,d}$ (resp. $K_{n,d}$) be the set of $n$-variate polynomials of degree $d$ that are convex (resp. have $K$-bounded derivatives) on $B$.

\begin{proposition} \label{prop:weierstrass}
	The following two statements hold:
	\begin{enumerate}[(i)]
		\item Let $f:\mathbb{R}^n \rightarrow \mathbb{R}$ be a twice continuously differentiable function that is convex over a box $B \subset \mathbb{R}^n$. Define $g_d$ to be one of the minimizers of $\min_{g \in C_{n,d}} \sup_{x \in B} |f(x)-g(x)|.$
		For any $\epsilon>0$, $\exists d$ such that
		$\sup_{x \in B} |g_d(x)-f(x)| <\epsilon.$
		\item Let $f:\mathbb{R}^n \rightarrow \mathbb{R}$ be a continuously differentiable function with $K$-bounded derivatives over a box $B \subset \mathbb{R}^n$. Define
		$h_d$ to be one of the minimizers of $\min_{g \in K_{n,d}} \sup_{x \in B} |f(x)-g(x)|.$
		For any $\epsilon>0,~ \exists d$ such that $\sup_{x \in B} |h_d(x)-f(x)| <\epsilon.$
	\end{enumerate}
\end{proposition}
The polynomials $g_d$ and $h_d$ are guaranteed to exist following Appendix A in \cite{bertsekasnonlin}, though they are not necessarily unique. Given $m$ feature vector-response variable pairs $(X_i,Y_i)_{i=1,\ldots,m}$, define now a convex-constrained regressor $\bg:\mathbb{R}^{n} \rightarrow \mathbb{R}$ and a bounded-derivatives regressor $\bh:\mathbb{R}^{n} \rightarrow \mathbb{R}$ as the\footnote{We once again assume that the feature vectors $\{X_i\}_{i=1,\ldots,m}$ are linearly independent and that $m$ is large enough.} solutions to the following least-squares regression problems:
\begin{equation}\label{eq:opt.bg}
\begin{aligned}
\bg \mathop{\mathrel{:}}=\arg &\min_{g \in P_{n,d}} &&\sum_{i=1}^m (Y_i-g(X_i))^2\\
&\text{s.t. } &&H_g(x) \succeq 0, \forall x\in B,
\end{aligned}
\end{equation}
and
\begin{equation}\label{eq:opt.bh}
\begin{aligned}
\bh \mathop{\mathrel{:}}=\arg &\min_{h \in P_{n,d}} &&\sum_{i=1}^m (Y_i-h(X_i))^2\\
&\text{s.t. } &&\frac{\partial f(x)}{\partial x_i} \geq K_i^-, ~\forall x\in B, i \in I^-, \quad \frac{\partial f(x)}{\partial x_i} \leq K_i^+, ~\forall x \in B,~i \in I^+.
\end{aligned}
\end{equation}

\begin{proposition} \label{prop:lym.glyn}
	Let $C$ be any closed full-dimensional subset of $B$ such that no point on the boundary of $B$ is in $C$ and let $d \in \mathbb{N}$. Assuming that Assumptions 1 through 3 hold, we have:
	\begin{enumerate} [(i)]
		\item If $\bg$ is as in (\ref{eq:opt.bg}), $g_d$ is as in Proposition \ref{prop:weierstrass}(i), then
		$\lim_{m \rightarrow \infty }\sup_{x \in C} |\bg(x)-g_d(x)| = 0 \text{ a.s.}$
		\item If $\bh$ is as in (\ref{eq:opt.bh}), $h_d$ is as in Proposition \ref{prop:weierstrass}(ii), then
		$\lim_{m \rightarrow \infty }\sup_{x \in C} |\bh(x)-h_d(x)|= 0 \text{ a.s.}$
	\end{enumerate}
\end{proposition}

	\begin{proposition} \label{prop:approx.consistency}
		Let $m,d \in \mathbb{N}$. Fix $r \in \mathbb{N}$ (including $r=0$).
		\begin{enumerate}[(i)]
			\item Let $\bg$ be as in (\ref{eq:opt.bg}) and $\tilde{g}_{m,d',r}$ as in (\ref{def:tg}). Then $\lim_{d' \rightarrow \infty} \sup_{x\in B} |\bg(x)-\tilde{g}_{m,d',r}(x)|=0$.
			\item Let $\bh$ be as in (\ref{eq:opt.bh}) and $\tilde{h}_{m,d',r}$ as in (\ref{def:th}). Then, $\lim_{d' \rightarrow \infty} \sup_{x\in B} |\bh(x)-\tilde{h}_{m,d',r}(x)|=0$.
		\end{enumerate}
	\end{proposition}

The proofs of Theorems \ref{th:gmdr.consistent} and \ref{th:hmdr.consistent} follow immediately from the triangle inequality and the three propositions above. The proof of Proposition \ref{prop:weierstrass} is given in Appendix \ref{appendix:weierstrass}, that of Proposition \ref{prop:lym.glyn} in Appendix \ref{appendix:lim.glyn}, and that of Proposition \ref{prop:approx.consistency} in Appendix \ref{appendix:approx.consistency}. Proposition \ref{prop:weierstrass} can be viewed as a shape-constrained Weierstrass theorem. To the best of our knowledge, the result and its proof are new to the literature. The proof of Proposition \ref{prop:lym.glyn} is similar to the proof of consistency of the CLSE in \cite{lim2012consistency}. However,  we cannot directly apply the results in \cite{lim2012consistency}, as they assume that $X_1,\ldots,X_m$ are sampled from $\mathbb{R}^n$, which is an assumption that we cannot make (in light of our Weierstrass-type results). This requires us to rework parts of the proof given in \cite{lim2012consistency}. The proof of Proposition \ref{prop:approx.consistency} relies on Theorems \ref{thm:conv} and \ref{thm:mon} and is not as straightforward as would be perhaps expected.

\section{Connection to Training-Optimal Shape-Constrained Polynomial Regressors} \label{sec:sc.pr}

Recall the definitions of the regressors $\tg, \thh, \bg$, and $\bh$ as defined in (\ref{def:tg}), (\ref{def:th}), (\ref{eq:opt.bg}), and (\ref{eq:opt.bh}). It follows from Positivstellens\"atze by \cite{putinar1993positive} and by \cite{scherer2006matrix} that $\lim_{r \rightarrow \infty} \tg=\bg$ and that $\lim_{r \rightarrow \infty} \thh=\bh$. It is thus natural to wonder how the SOSEs, $\tg$ and $\thh$, compare to their limits, $\bg$ and $\bh$.

 The first notable difference between the SOSEs and their limits relates to their computation. As seen in Section \ref{subsec:sose.computation}, $\tg$ and $\thh$ can be obtained for any fixed $r$ by solving an SDP. By contrast, $\bg$ and $\bh$, however, are NP-hard to compute in general, as we prove next. More specifically, we provide a complete classification of the complexity of computing $\bg$ and $\bh$ based on their degree $d$. We work in the standard Turing model of computation (see, e.g., \cite{sipser1996introduction}), where the input to the problem $(\{X_i,Y_i\}_{i=1,\ldots,m}, B \subset \mathbb{R}^n)$ is described by rational numbers.
	\begin{definition}
		Let CONV-REG-$d$ (resp. BD-DER-REG-$d$) be the following decision problem. Given a box $B \subset \mathbb{R}^n$ as in (\ref{eq:box}) (i.e., pairs $(l_i,u_i)$, $i=1,\ldots,n$), data points $(X_i,Y_i) \in B \times \mathbb{R}$, for $i=1,\ldots,m$, and a scalar $t$ (resp. a vector $K$ as in Definition \ref{def:bdr}, and a scalar $t$), decide whether there exists a polynomial $p$ of degree $d$ that is convex over $B$ (resp. with $K$-bounded derivatives over $B$) such that 
		$\sum_{i=1}^{m} (Y_i-p(X_i))^2 \leq t.$
	\end{definition}
	
	We use the standard notation $P$ to refer to the class of polynomially-solvable decision problems in the Turing model of computation. 
	
	\begin{theorem}\label{th:charac}
		BD-DER-REG-$d$ is strongly NP-hard for $d\geq 3$ and is in P for $d\leq 2$. CONV-REG-$d$ is strongly NP-hard for $d\geq 3$, can be reduced to a semidefinite program (SDP) of polynomial size for $d=2$, and is in P for $d=1$.
	\end{theorem}
	
	The proof of Theorem \ref{th:charac} is in Appendix~\ref{appdx:charc}. Unless $P=NP$, this Theorem \ref{th:charac} suggest that for $d \geq 3$ and some fixed $r$, it cannot be the case in general that $\bg$ and $\bh$ are equal to $\tg$ and $\thh$. When certain additional structural assumptions are made, this can change: for example, as seen above, when $d=1$ or $d=2$. Likewise, when $\bh$ is a separable polynomial (see the results by Polya and Szego reviewed in, e.g., \cite{powers2000polynomials}) or when $\bg$ is a separable plus quadratic polynomial (see \cite{ahmadi2021sums}), we have that $\bh=\thh$ and $\bg =\tg$ for known and small values of $r$. In the case where no additional structural assumptions are made, i.e., when the estimators can be different a priori, one may ask whether one set of estimators is better suited to shape-constrained regression than the other.  
		
		One can compare, for some fixed $r$, the performance of $\tg$ (resp. $\thh$) against $\bg$ (resp. $\bh$) without computing $\bg$ (resp. $\bh$), which (unless $P=NP$) would require exponential time. It is quite clear, as the feasible set of (\ref{def:tg}) (resp. (\ref{def:th})) inner approximates that of (\ref{eq:opt.bg}) (resp. \ref{eq:opt.bh}), that, for any $r$, the \emph{training} RMSE of $\bg$ and $\bh$ is always going to be lower than that of $\tg$ and $\thh$. In this sense, $\bg$ and $\bh$ are \emph{training-optimal} shape-constrained polynomial regressors. Is the training performance of $\tg$ and $\thh$ close for a given value of $r$? To ascertain this, we propose a procedure, relying on convex optimization (SDPs in the case of convex regression, and QPs in the case of bounded-derivative regression), which checks on any instance how close $\tg, \thh$ and $\bg, \bh$ are in terms of training error. We present it for $\bg$ and $\tg$, but the same idea applies to $\bh$ and $\thh$. We select $N'$ points $z_1,\ldots,z_{N'} \in B$ (e.g., uniformly at random) and solve
		\begin{equation} \label{eq:sample.opt}
		\begin{aligned}
		&\min_{g \in P_{n,d}} &&\sum_{i=1}^m (Y_i-g(X_i))^2\\
		&\text{s.t. } &&H_g(z_i) \succeq 0, \text{ for } i=1,\ldots,N'.
		\end{aligned}
		\end{equation}
		Its objective value is a lower bound on the optimal value of (\ref{eq:opt.bg}), as the convexity constraint of (\ref{eq:opt.bg}) has been relaxed. We then compare this lower bound to the upper bound on the optimal value of (\ref{eq:opt.bg}) given by the optimal value of the SDP in (\ref{def:tg}). If the upper and lower bounds are close, then $\tg$ is close to training-optimal. We examine this approach on a dataset generated as described in Appendix \ref{appendix:specs} with $f=f_1$ and $f=f_2$. In Table \ref{tab:ratio}, we give the ratio of the objective of (\ref{eq:sample.opt}) against that of (\ref{eq:opt.tg}) for these datasets and different values of $d, N'$ (we take $n=6,r=1, m=1000$). We observe that, even for very low $r$, the objective value of (\ref{eq:sample.opt}) is very close to that of (\ref{def:tg}), particularly when $N'$ is large. Similar results occur when considering the applications in Section \ref{sec:apps}: for Section \ref{subsec:KLEMS}, using $N'=100$, $d=4$, $r=2$, we get an average ratio across industries of $0.92$; for Section \ref{subsec:opt.transport}, using $N'=100$, $d=5$, $r=1$, we get an average of $0.96$ across various values of $\ell$ and $L$ (see Appendix \ref{appendix:add.comp}). This indicates that in practice, at least, the SOSEs are close to training-optimal in all our applications, even for small $r$.
		
		It is important however to note that low training error is not the criterion against which we measure the quality of our regressors. Of much more importance is low testing error. When accounting for this criterion, lower training error can sometimes be viewed as a hindrance rather than an advantage, as it can indicate overfitting to the data at hand. In practice, we observe (see Figures \ref{fig:train_test_rmse} and \ref{fig:train_test_rmse_2}) that the SOSEs do no worse for small $r$ than they do for larger $r$ in terms of testing error. (In fact, in some cases, increasing $r$ can lead to a marginal increase of testing error.) Thus, the SOSEs are not necessarily less valuable than the training-optimal $\bg$ and $\bh$. On the contrary, they seem to perform equally well or marginally better in terms of testing error, even for small $r$, and have the advantage of being efficiently computable. This makes them estimators of interest in their own right, independently of $\bg$ and $\bh$.

		\begin{table}
				\centering
			\small
			\begin{tabular}{|c|c|c|c|c|}
				\hline
				& \multicolumn{2}{c}{Fitting to $f_1$} \vline & \multicolumn{2}{c}{Fitting to $f_2$} \vline \\
				\hline
				degree $d$ & N'=100 & N'=1000 & N'=100 & N'=1000 \\
				\hline
				2 &  0.999 &  0.999 &  0.995 & 0.996 \\
				3 & 0.978 & 0.997 & 0.989 & 0.996 \\
				4 & 0.936 & 0.984 & 0.9428 & 0.971 \\
				5 & 0.792 & 0.937 & 0.834 & 0.932 \\
				6 & 0.446 & 0.801 & 0.433 & 0.791\\
				\hline	 
			\end{tabular}
			\caption{Average ratio of the objective value of the sample problem against that of the sos-shape constrained problem for $n=6$, $r=1$, and $m=1000$, for different values of $N'$ and $d$ and functions $f_1$ and $f_2$.}
			\label{tab:ratio}
		\end{table}

		\section{Applications of Shape-Constrained Regression to Economics, Real-Time Optimization, and Optimal Transport} \label{sec:apps}
		In this section, we present three applications of the SOSEs. The first is fitting a production function to data (Section \ref{subsec:KLEMS}). It is a well-known application of shape-constrained regression in economics and we show that it outperforms the prevalent approach there. The second is predicting the optimal value of a conic program and is, to the best of our knowledge, novel (Section \ref{subsec:opt.val}). The third is in optimal transport, more specifically color transfer (Section \ref{subsec:opt.transport}), and has not been tackled using sum of squares methodology previously, despite it being a very relevant setting for the SOSEs.
		
		\subsection{Fitting a Production Function to Data}\label{subsec:KLEMS}
		
		The goal of this application is to estimate the functional relationship between the yearly inputs of Capital ($K$), Labor ($L$), and Intermediate goods ($I$) to an industry, and the yearly gross-output production $Out$ of that industry. As $Out$ is assumed to be a decreasing function in $K,L,$ and $I$, as well as concave in $K$, $L,$ and $I$ by virtue of diminishing returns, an estimator constrained to have this shape is desirable. Traditionally, in the economics literature, this is done by fitting a \emph{Cobb-Douglas production function} to the data, i.e., finding $(a,b,c,d)$ such that the function
		$Out=a \cdot K^b \cdot L^c \cdot I^d$
		is as close as possible to the observed data. The advantage of such an approach is that it can be couched as a linear regression problem by working in log-space, with the shape constraints being imposed via the constraints $b,c,d\geq0$, $b+c+d\leq 1$ and $a \geq 0$. We compare the SOSE to the Cobb-Douglas estimator. To fit the estimators, we consider the USA KLEMS data (available at \url{http://www.worldklems.net/data.htm}), which contains yearly gross-output production data $Out$ for 65 industries in the US, from 1947 to 2014 as well as  yearly inputs of Capital $K$, Labor $L$, and Intermediate goods $I$, adjusted for inflation. Since the data is temporal, we perform a temporal split for our training-testing splits. We then fit the Cobb-Douglas estimator and the SOSE with degree $d=4$ and $r=2$ and the aforementioned shape constraints to the data. The results obtained are given in Figure \ref{fig:KLEMS}. As can be seen, our method outperforms the traditional Cobb-Douglas technique on 50 out of the 65 industries, sometimes quite significantly.
		\begin{figure}[h]
			\begin{center}
				\begin{subfigure}{.4\textwidth}
					\centering
					\includegraphics[width=1\linewidth]{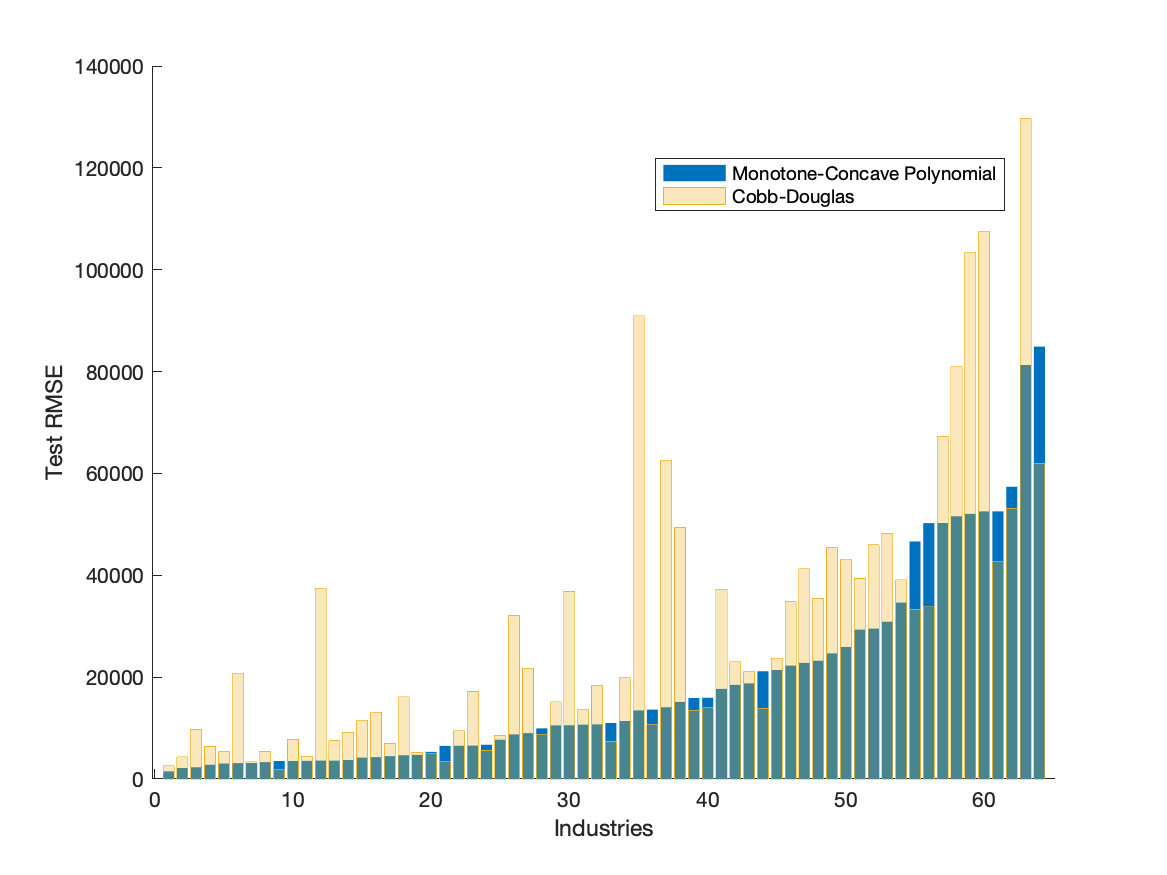}  
					\caption{}
					\label{fig2:a}
				\end{subfigure}
				\begin{subfigure}{.4\textwidth}
					\centering
					\includegraphics[width=1\linewidth]{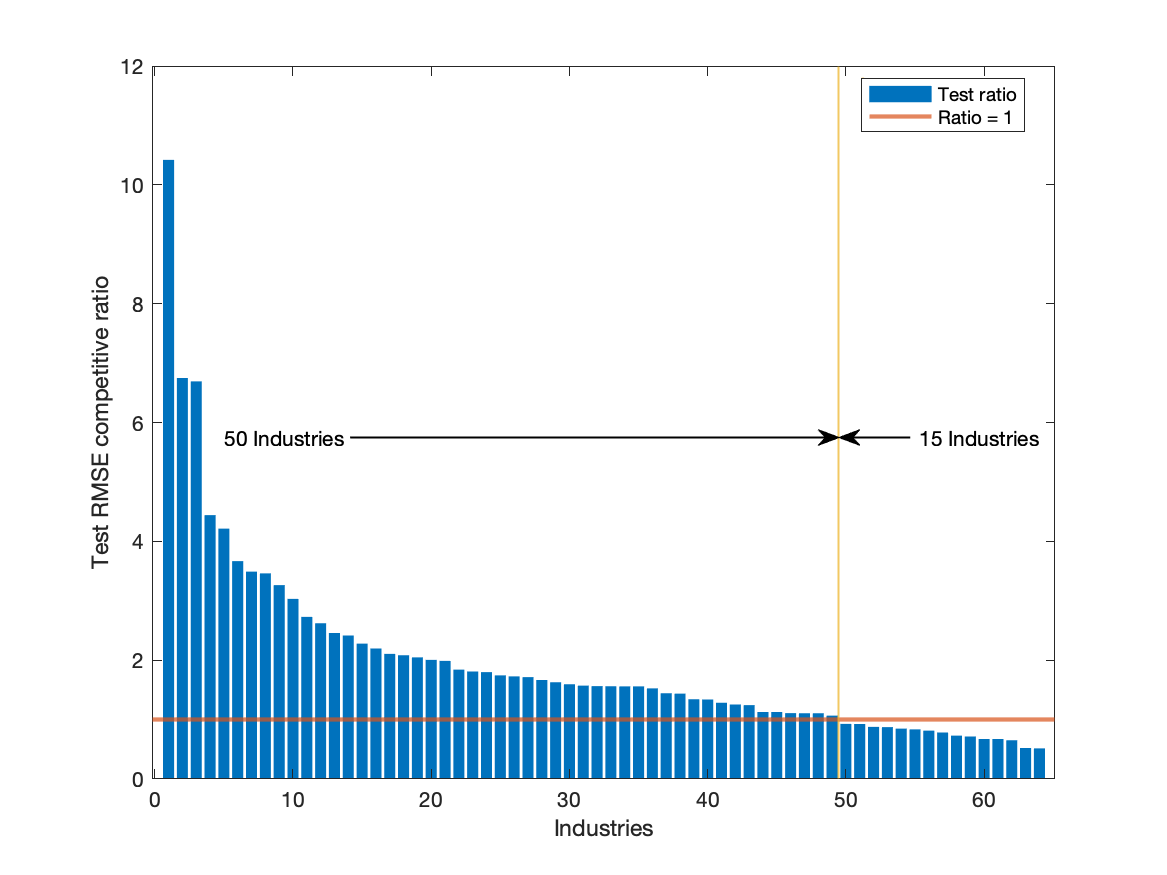}  
					\caption{}
					\label{fig2:b}
				\end{subfigure}
			\end{center}
			\caption{Comparison of the Test RMSE for the Cobb-Douglas production functions and the SOSE with the same shape constraints (concavity and monotonicity) across 65 industries; see Section \ref{subsec:KLEMS}. In Figure \ref{fig2:a}, the test RMSE values obtained, and in Figure \ref{fig2:b}, the ratio of the Cobb-Douglas RMSE over the SOSE RMSE across industries.}
			\label{fig:KLEMS}
		\end{figure}


		\subsection{Predicting the Optimal Value of a Conic Program}\label{subsec:opt.val}
		
		Let $\mathcal{K} \subseteq \mathbb{R}^n$ be a proper cone, $\langle \cdot, \cdot \rangle$ be an inner product on $\mathbb{R}^n \times \mathbb{R}^n$, $A \in \mathbb{R}^{m \times n}$, $b\in \mathbb{R}^m$, and $c\in \mathbb{R}^n$. We denote by $\mathcal{K}^*$ the dual cone associated to $(\mathcal{K}, \langle \cdot, \cdot \rangle)$ and by $A^*$ the adjoint operator of $A$. We also use $x \succeq_{\mathcal{K}} y$ to mean that $x-y \in \mathcal{K}$ and we say that $f:\mathbb{R}^n \mapsto \mathbb{R}$ is $\mathcal{K}$-nonincreasing if $x \succeq_{\mathcal{K}} y \Rightarrow f(x) \leq f(y)$. We now consider the pair of primal-dual conic programs:
		
		\begin{minipage}{.34\linewidth}
			\begin{equation} \tag{$P$} \label{eq:conic.P}
			\begin{aligned}
			v_P(b,c) \mathrel{\mathop{:}}=&\inf_{x \in \mathbb{R}^n} ~\langle c,x \rangle\\
			&\text{s.t. } Ax \preceq_{\mathcal{K}}b
			\end{aligned}
			\end{equation}
		\end{minipage}
		\quad
		\begin{minipage}{.56\linewidth}
			\begin{equation} \tag{$D$} \label{eq:conic.D}
			\begin{aligned}
			v_D(b,c)\mathrel{\mathop{:}}=&\sup_{y \in \mathbb{R}^m} ~-\langle b,y\rangle\\
			&\text{s.t. }  A^*y+c=0, \quad y \succeq_{\mathcal{K}^*} 0.
			\end{aligned}
			\end{equation}
		\end{minipage}
		
		\noindent We assume that strong duality holds, which implies that $v_P(b,c)=v_D(b,c)=\mathrel{\mathop{:}} v(b,c)$. As it turns out, $v(b,c)$ satisfies a number of shape constraints.
		
		\begin{proposition}\label{prop:opt.val.proof}
			Assume that strong duality holds for the primal-dual pair (\ref{eq:conic.P})-(\ref{eq:conic.D}). Then, the function $v(b,c)$ is (i) convex in $b$, (ii) concave in $c$, (iii) $\mathcal{K}$-nondecreasing in $b$.
		\end{proposition}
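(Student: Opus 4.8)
The plan is to read all three properties off the two descriptions of $v$ made available by the strong-duality hypothesis: since $v(b,c)=v_P(b,c)=v_D(b,c)$, I may use the primal (\ref{eq:conic.P}) or the dual (\ref{eq:conic.D}), whichever is more convenient. The unifying observation behind (i) and (ii) is that, once one of the two parameters $b,c$ is frozen, $v$ becomes a pointwise supremum (from the dual) or a pointwise infimum (from the primal) of functions that are \emph{affine} in the remaining parameter, and such functions are automatically convex, respectively concave (see, e.g., \cite{BoydBook}).

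Concretely, for (i) I fix $c$ and use the dual, $v(b,c)=\sup\{-\langle b,y\rangle\ :\ A^*y+c=0,\ y\succeq_{\mathcal{K}^*}0\}$: the feasible set in $y$ does not depend on $b$, and for each fixed dual-feasible $y$ the map $b\mapsto-\langle b,y\rangle$ is linear, so $b\mapsto v(b,c)$ is a supremum of affine functions and hence convex. For (ii) I fix $b$ and use the primal, $v(b,c)=\inf\{\langle c,x\rangle\ :\ Ax\preceq_{\mathcal{K}}b\}$: the feasible set in $x$ does not depend on $c$, and $c\mapsto\langle c,x\rangle$ is linear for each fixed primal-feasible $x$, so $c\mapsto v(b,c)$ is an infimum of affine functions and hence concave. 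I would add a sentence recalling that strong duality makes $v$ finite on the relevant domain, so that ``convex''/``concave'' are meant in the usual sense; otherwise the statements are understood for extended-real-valued functions.

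For (iii) I would again use the primal, via monotonicity of the feasible region under the cone order. Writing $X(b):=\{x\ :\ b-Ax\in\mathcal{K}\}$, if $b_1\succeq_{\mathcal{K}}b_2$ then for every $x\in X(b_2)$ we have $b_1-Ax=(b_1-b_2)+(b_2-Ax)\in\mathcal{K}$, since $b_1-b_2\in\mathcal{K}$, $b_2-Ax\in\mathcal{K}$, and the (proper, hence convex) cone $\mathcal{K}$ is closed under addition; thus $X(b_2)\subseteq X(b_1)$, and minimizing the fixed linear functional $x\mapsto\langle c,x\rangle$ over a larger set can only decrease the optimal value. This yields the asserted monotonicity of $v(\cdot,c)$ with respect to $\preceq_{\mathcal{K}}$; the identical conclusion also falls out of the dual, since $y\succeq_{\mathcal{K}^*}0$ gives $\langle b_1-b_2,y\rangle\ge0$ and hence $-\langle b_1,y\rangle\le-\langle b_2,y\rangle$ over the common feasible set.

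I do not expect a genuine obstacle: the argument is only a few lines. The two places where care is needed are (a) invoking strong duality precisely, so that the primal and dual optimal values really do coincide and $v$ is finite-valued (otherwise (i)--(ii) should be phrased for extended-real-valued functions), and (b) tracking the direction of the inequality in (iii) against the sign conventions fixed earlier for $\preceq_{\mathcal{K}}$, $\succeq_{\mathcal{K}^*}$, and $\mathcal{K}$-monotonicity --- the feasible-set inclusion above gives $v(b_1,c)\le v(b_2,c)$ whenever $b_1\succeq_{\mathcal{K}}b_2$, which is the precise form of the monotonicity to be recorded.
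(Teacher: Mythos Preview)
Your proposal is correct and follows essentially the same approach as the paper: (i) via the dual as a supremum of linear functions of $b$, (ii) via the primal as an infimum of linear functions of $c$, and (iii) via the feasible-set inclusion $X(b_2)\subseteq X(b_1)$ when $b_1\succeq_{\mathcal{K}}b_2$. Your added dual argument for (iii) and your care in recording the precise direction of the inequality are welcome but do not alter the substance of the argument.
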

		
		The proof is given in Section \ref{appendix:sec.apps}. It suggests a possible use of the SOSE, and shape-constrained regression more generally: compute the SOSE of $v(b,c)$ and then use it to obtain quick predictions of the optimal value of (\ref{eq:conic.P}) for any new set of parameters $(b,c)$, without having to solve (\ref{eq:conic.P}) explicitly. This can prove useful for real-time decision making, as we see now.
		
		\subsubsection{An Application to Real-Time Inventory Management Contract Negotiation} \label{subsubsec:inventory}
		
		In a single-product inventory with a finite-time horizon, the state of the inventory at time $t=1,2,\ldots,T$ is specified by the amount $x_t \in \mathbb{R}$ of product in the inventory at the beginning of period $t$. During the period, the retailer orders $q_t \geq 0$ units of product from the supplier (we assume it arrives immediately) and satisfies external demand for $d_t \geq 0$ units of the product. Thus, the state equation of the inventory is given by $x_{t+1}=x_t+q_t-d_t$. We assume $x_1=0$ and allow for $x_t \leq 0$ (backlogged demand). We further enforce a minimum amount $L$ that the retailer needs to buy, i.e., $\sum_{t=1}^T q_t \geq L$. The retailer wishes to minimize the overall inventory management cost. To this effect, we let $h,p,c,s \geq 0$ with $h+p \geq s$ be the respective costs
		per period and per unit, of storing the product, backlogged demand, replenishing the inventory, and salvaging the product \cite{ben2009robust}. Following \cite{ben2005retailer}, we further assume that the supplier and retailer agree on a \emph{flexible commitment contract}: at time $t=0$, the retailer must commit to projected future orders, $w_t \in \mathbb{R}, 1\leq t \leq T$. These do not have to be fully respected, but a penalty $\alpha^{\pm}$ will be incurred per unit of excess/recess of the actual orders $q_t$ as compared to commitments $w_t$ and a penalty $\beta^{\pm}$ will be incurred for variations in the commitment $w_t$ across periods. The problem that the inventory manager has to solve to obtain the minimum-cost inventory is thus:
		\begin{equation}\label{eq:cost.inventory}
		\begin{footnotesize}
		\begin{aligned}
		&\min_{x_t,q_t,w_t} &&\sum_{t=1}^T \left(h \max \{x_{t+1},0\} + p \max \{0,-x_{t+1}\} + \alpha^+ \max \{ q_t-w_t,0\}+\alpha^- \max \{w_t-q_t,0\} \right)\\
		& && +\sum_{t=1}^T c q_t- s \max \{x_{T+1},0\}+ \sum_{t=2}^T \left( \beta^+ \max \{w_t-w_{t-1},0\} + \beta^- \max \{w_{t-1}-w_t,0\}\right)\\
		&\text{s.t. } &&x_{t+1}=x_t+q_t-d_t, t=1,\ldots,T, \quad x_1=0, \quad \sum_{t=1}^T q_t \geq L, \quad q_t\geq 0,~t=1,\ldots,T.
		\end{aligned}
		\end{footnotesize}
		\end{equation}
		The demand $d\mathrel{\mathop{:}}=(d_1,\ldots,d_T)^T$ is assumed to be uncertain however, belonging either to a box $\mathcal{S}_B \subseteq \mathbb{R}^T$ or to an ellipsoid $\mathcal{S}_E \subseteq \mathbb{R}^T$. We also suppose that $q_t$ depends affinely on $d_t$, as done in \cite{ben2005retailer}, i.e., we write:
		$q_t=q_t^0+\sum_{\tau=1}^{t-1} q_t^{\tau} d_{\tau}, t=1,\ldots,T,$ where $\{q_t^{\tau}\}_{t,\tau}$ are additional variables. Using the state equation to get rid of variables $\{x_t\}$ and introducing new variables $\{y_t\}$, $\{u_t\}$, $\{v_t\}$, $\{z_t^{\pm}\}$ which encode the $\max$ functions, we rewrite the worst-case formulation of (\ref{eq:cost.inventory}) as the following semi-infinite program (see \cite{ben2005retailer} for details):
		\begin{equation}\label{eq:semi.inf.opt}
		\begin{footnotesize}
		\begin{aligned}
		&\min_{C,w_t,z_t^{\pm}, y_t^{\tau}, u_{t}^{\tau}, v_{t}^{\tau},q_{t}^{\tau}} &&\beta^+ \cdot \sum_{t=2}^T z^+_t+\beta^- \sum_{t=2}^{T} z^-_t +  C \\
		&\hspace{11mm} \text{s.t. }&&z_t^+ \geq w_t-w_{t-1},~ z_t^+ \geq 0,\quad z_t^- \geq w_{t-1}-w_{t},~z_t^- \geq 0,\quad t=2,\ldots,T,\\
		&\hspace{11mm} \forall d \in \mathcal{S}_{B/E}: &&C \geq \sum_{t=1}^T \left(y_0^t+cq_0^t+\alpha^+ u_0^t+\alpha^- v_0^t +\sum_{\tau=1}^{t-1} \left(y^t_{\tau} +cq^t_{\tau}+\alpha^+ u^t_{\tau}+\alpha^- v_{\tau}^t\right) d_{\tau}\right)\\
		& &&y_0^{t}+\sum_{\tau=1}^{t-1} y_{\tau}^{t} d_{\tau} \geq   \bar{h}_t \sum_{\tau=1}^{t}\left( q_0^{\tau}+\sum_{\sigma=1}^{\tau -1} q_{\sigma}^{\tau}d_{\sigma}-d_{\tau}\right),~t=1,\ldots,T, \\
		& &&y_0^{t}+\sum_{\tau=1}^{t-1} y_{\tau}^{t} d_{\tau} \geq p \sum_{\tau=1}^{t} \left( -q_0^{\tau}-\sum_{\sigma=1}^{\tau -1} q_{\sigma}^{\tau}d_{\sigma}+d_{\tau}\right), ~t=1,\ldots,T, \\
		& && u_0^{t}+\sum_{\tau=1}^{t-1} u_{\tau}^{t} d_{\tau} \geq q_0^t+\sum_{\tau=1}^{t-1}q_\tau^t d_{\tau}-w_t,~ u_0^{t}+\sum_{\tau=1}^{t-1} u_{\tau}^{t} d_{\tau} \geq 0, ~t=1,\ldots,T, \\
		& && v_0^{t}+\sum_{\tau=1}^{t-1} v_{\tau}^{t} d_{\tau} \geq w_t-q_0^t-\sum_{\tau=1}^{t-1}q_\tau^t d_{\tau},~v_0^{t}+\sum_{\tau=1}^{t-1} v_{\tau}^{t} d_{\tau} \geq 0, ~t=1,\ldots,T,  \\
		& &&\sum_{t=1}^{T} \left( q_0^t+\sum_{\tau=1}^{t-1}q^t_{\tau}d_\tau\right)\geq L,~q_0^t+\sum_{\tau=1}^{t-1}q^t_{\tau}d_\tau \geq 0, ~t=1,\ldots,T.
		\end{aligned}
		\end{footnotesize}
		\end{equation}
		Depending on whether $d\in \mathcal{S}_B$ or $\mathcal{S}_E$, (\ref{eq:semi.inf.opt}) can be cast as either an LP or an SOCP by rewriting the constraints involving $d$. For example, if $\mathcal{S}_B$ is given by $\mathcal{S}_B=\{d \in \mathbb{R}^T~|~ |d_t-\bar{d}_t| \leq \rho_t, t=1,\ldots,T\},$ the constraint $\sum_{t=1}^{T} \left( q_0^t+\sum_{\tau=1}^{t-1}q^t_{\tau}d_\tau\right)\geq L,~ \forall d \in \mathcal{S}_B$
		is equivalent to 
		\begin{align} \label{eq:constraint.L}
		\sum_{t=1}^T q_t^0+\sum_{\tau=1}^T \left( \sum_{t=1}^T q_t^{\tau}\right)\bar{d}_{\tau}-\sum_{\tau=1}^T \gamma_{\tau} \rho_{\tau} \geq L,\quad -\gamma_{\tau} \leq \sum_{t=1}^T (-q_t^{\tau}) \leq \gamma_{\tau},~\tau=0,\ldots, T,
		\end{align}
		where $\{\gamma_\tau\}_{\tau}$ are new variables; again, see \cite[Section 3.3]{ben2005retailer} for more details. We consider now the following scenario: at time $t=0$, before the inventory starts, the retailer and the supplier negotiate the details of their contract, that is, they need to agree on values of $\alpha^{\pm}$, $\beta^\pm$, and $L$. To do so effectively, the retailer needs to know in real time the worst-case cost incurred for different choices of these values. Solving the LP/SOCP to do so is impossible as, e.g., running one such LP with $T=100$ takes $\approx 15$ minutes, whilst running the SOCP with $T=100$ takes more than $60$ minutes. A tool which produces an approximation of the minimum inventory cost as a function of $(\alpha^\pm, \beta^\pm, L)$ would thus be valuable, which is what shape-constrained regression enables.
		
		\begin{proposition}\label{prop:shape.inventory}
			Let $v(\alpha^+,\alpha^-,\beta^+,\beta^-,L)$ be the optimal value of the LP obtained by taking $d\in \mathcal{S}_B$ in (\ref{eq:semi.inf.opt}). We have that (i) $v$ is convex in $L$, (ii) $v$ is concave jointly in $(\beta^+,\beta^-)$,(iii) $v$ is nondecreasing in $\alpha^+,\alpha^-,\beta^+,\beta^-,$ and $L$.
			The same results hold for the optimal value of the SOCP obtained by taking  $d \in \mathcal{S}_E$ in \ref{eq:semi.inf.opt} (assuming strong duality).
		\end{proposition}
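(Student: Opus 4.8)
The plan is to put the robust counterpart of (\ref{eq:semi.inf.opt}) into the conic form (\ref{eq:conic.P}) and then read the three statements off Proposition \ref{prop:opt.val.proof}, after locating each of the five parameters $(\alpha^+,\alpha^-,\beta^+,\beta^-,L)$ in the data of that program. For $d\in\mathcal{S}_B$, replacing each ``$\forall d$'' constraint in (\ref{eq:semi.inf.opt}) by its exact robust counterpart --- as illustrated for one constraint in (\ref{eq:constraint.L}) --- yields a finite linear program, i.e.\ a conic program over a nonnegative orthant, for which strong duality holds automatically; Proposition \ref{prop:opt.val.proof} then applies directly. For $d\in\mathcal{S}_E$ the robust counterparts are second-order-cone constraints, so one obtains an SOCP and uses strong duality as a hypothesis, exactly as in Proposition \ref{prop:opt.val.proof}; all the steps below are otherwise identical for $\mathcal{S}_B$ and $\mathcal{S}_E$, and strong duality is needed only for part (i).

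The crux is the placement of the parameters. The scalar $L$ occurs only on the right-hand side of the single constraint $\sum_t\bigl(q_0^t+\sum_\tau q_\tau^t d_\tau\bigr)\ge L$, and its robust counterpart keeps $L$ on the right-hand side, so $L$ is, up to sign, a single coordinate of the vector $b$ in (\ref{eq:conic.P}), with the constraint matrix and objective independent of $L$. The scalars $\beta^+,\beta^-$ occur only as the objective coefficients of the variables $\{z_t^+\}_t$ and $\{z_t^-\}_t$, so the objective vector is affine in $(\beta^+,\beta^-)$ while the constraint matrix and $b$ do not depend on them; moreover $z_t^+\ge 0$ and $z_t^-\ge 0$ at every feasible point. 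The scalars $\alpha^+,\alpha^-$ occur only as coefficients multiplying the decision variables $\{u_\tau^t\}$ and $\{v_\tau^t\}$ inside the semi-infinite constraint that defines $C$ --- hence neither in the objective vector nor in $b$ --- but the inequalities $u_0^t+\sum_\tau u_\tau^t d_\tau\ge 0$ and $v_0^t+\sum_\tau v_\tau^t d_\tau\ge 0$ for all $d$ in the uncertainty set are themselves imposed in (\ref{eq:semi.inf.opt}) and therefore hold at every feasible point.

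Part (i) then follows because $b$ is affine in $L$ and, under strong duality, $v$ is convex in $b$ by Proposition \ref{prop:opt.val.proof}(i). Part (ii) follows because the objective vector is affine in $(\beta^+,\beta^-)$ while the feasible set does not depend on them, so $v$ is concave in the objective vector by Proposition \ref{prop:opt.val.proof}(ii), hence jointly concave in $(\beta^+,\beta^-)$. For part (iii), all three monotonicities are instances of the primal mechanism underlying the proof of Proposition \ref{prop:opt.val.proof}(iii) --- enlarging the feasible set, or lowering the objective pointwise, cannot increase the infimum. Increasing $L$ tightens $\sum_t q_t(d)\ge L$ and shrinks the feasible set, so $v$ is nondecreasing in $L$. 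Increasing $\beta^+$ (resp.\ $\beta^-$) replaces the objective by a pointwise-larger one on the unchanged feasible set, since $\sum_t z_t^+\ge 0$ (resp.\ $\sum_t z_t^-\ge 0$) there, so $v$ is nondecreasing in $\beta^+$ (resp.\ $\beta^-$). Lowering $\alpha^+$ to any $\alpha'\le\alpha^+$ only lowers the right-hand side of the $C$-constraint for each admissible $d$, since $u_0^t+\sum_\tau u_\tau^t d_\tau\ge 0$ there; it thus enlarges the feasible set without changing the objective, so $v$ is nondecreasing in $\alpha^+$, and symmetrically in $\alpha^-$.

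The step I expect to be the main obstacle --- and the reason the proposition is not an immediate corollary of Proposition \ref{prop:opt.val.proof} --- is the handling of $\alpha^+$ and $\alpha^-$: they enter bilinearly, multiplying decision variables inside a constraint, so they sit neither in $b$ nor in the objective vector, and the convexity/concavity part of Proposition \ref{prop:opt.val.proof} says nothing about them. Their monotonicity must be argued by hand, and the argument relies on the sign information $u_0^t+\sum_\tau u_\tau^t d_\tau\ge 0$ and $v_0^t+\sum_\tau v_\tau^t d_\tau\ge 0$ persisting on the feasible set. For this reason I would run the monotonicity part of the proof on the semi-infinite formulation (\ref{eq:semi.inf.opt}) itself rather than on its robust counterpart: in the ellipsoidal case $\alpha^+$ would additionally appear inside a Euclidean-norm term of the robust counterpart, where the ``pointwise-larger'' comparison breaks down, whereas on the semi-infinite form it is transparent.
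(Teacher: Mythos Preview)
Your proof is correct and follows the same overall plan as the paper: parts (i) and (ii) are read off Proposition~\ref{prop:opt.val.proof} after identifying where $L$ and $(\beta^+,\beta^-)$ sit in the conic data, and the monotonicity in $\beta^\pm$ and $L$ is argued exactly as the paper does. The one place you differ is the monotonicity in $\alpha^+$ (and $\alpha^-$). The paper argues by contradiction: assuming $v(\alpha_1)>v(\alpha_2)$ with $\alpha_1<\alpha_2$, it takes an optimal solution at $\alpha_2$ and \emph{rescales} the $u$-variables by $\alpha_2/\alpha_1\ge 1$ to produce a feasible solution at $\alpha_1$ with the same objective value, using $u_0^t+\sum_\tau u_\tau^t d_\tau\ge 0$ to check feasibility after rescaling. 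Your argument is more direct: keep the \emph{same} solution and observe that lowering $\alpha^+$ only lowers the right-hand side of the $C$-constraint (again because $u_0^t+\sum_\tau u_\tau^t d_\tau\ge 0$), so the feasible set enlarges while the objective is untouched. Both arguments hinge on the same sign constraint; yours avoids the rescaling construction and the contradiction framing. Your remark that this step should be run on the semi-infinite form~(\ref{eq:semi.inf.opt}) rather than on the robust counterpart is well taken and matches what the paper does implicitly.
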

		
		Thus, when preparing for the negotiation, we generate $m$ training points by sampling uniformly at random $m$ tuples $(\alpha^+,\alpha^-,\beta^+,\beta^-,L)$ and computing the minimum inventory cost $v$ by solving (\ref{eq:semi.inf.opt}) with a fixed set $\mathcal{S}_B$. We then fit the degree-$d$ SOSE of $v$ (with $r=2$) to these points. During the negotiation, to obtain the minimum inventory cost for new values of $(\alpha^{\pm},\beta^{\pm},L)$, we simply evaluate the SOSE at these values. This task only takes milliseconds, in contrast to solving the LPs or SOCPs. In Table \ref{tab:ours.upr.aarc}, we give the relative accuracy of the optimal values predicted by the SOSE, both on average (train and test) and in the worst-case (test only), as compared to the true optimal values obtained via the LP. (Similar results hold for $\mathcal{S}_E$.) As can be seen, the accuracy loss of the predictions is 1-2\% on average. We also compare the SOSE against the \emph{Unconstrained} Polynomial Regressor (UPR). Against the UPR, the SOSE performs best for $m$ around 400 and values of $d$ around 4 or 6. This implies that when using the SOSE as compared to the UPR, we need not solve as many LPs offline to obtain good-quality predictions. Furthermore, the SOSE appears to be more robust across variations in the data, with consistent average performance over training and testing sets and best worst-case performance, particularly for higher $d$.

		\begin{table}[]
			\begin{center}
				\footnotesize
				\begin{tabular}{l@{\hspace{6pt}}l|c@{\hspace{6pt}}c@{\hspace{6pt}}c|c@{\hspace{6pt}}c@{\hspace{6pt}}c}
					\toprule
					& & \multicolumn{3}{c|}{\textbf{SOSE}} & \multicolumn{3}{c}{\textbf{UPR}}\\
					\midrule
					&   &         Ave (\%)  &          Ave (\%)  &             Max (\%) &        Ave (\%)  &            Ave (\%)&             Max (\%) \\
					m &   d &         Train &           Test      &    Test  &     Train &      Test         &    Test  \\
					\midrule
					100  & 2 &         2.522 &	2.487 &    \textbf{6.329} &           1.545 & 	1.899  &         6.723 \\
					& 4 &          1.018 & 1.027 &    \textbf{3.506 }&            0.000 &	4.366 &             31.656 \\
					& 6 &         0.853 &	0.867 &    \textbf{3.102} &            0.000 & 	3.781 &             24.169 \\
					\hline
					200  & 2 &         2.530 & 	2.451 &             6.060 &           1.662 &	1.885 &      \textbf{4.400} \\ 
					& 4 &          1.111 &	1.065 &     \textbf{3.191} &           0.227 &	0.645 &              4.636 \\
					& 6 &        0.939 &	0.907 &     \textbf{1.849} &            0.000 &          3.174 &             19.809 \\
					\hline
					400  & 2 &         2.550 &	2.490 &             5.892 &           1.698 &	1.803 &        \textbf{4.325} \\
					& 4 &          1.065 & 1.026 &   \textbf{2.731} &           0.304 &	0.477 &              3.706 \\
					& 6 &         0.898 &	0.870 &   \textbf{1.889 }&            0.000 &        9.34 &            225.441 \\
					\hline
					600  & 2 &       2.577	& 2.478 &             5.849 &           1.788	& 1.752 &      \textbf{4.209} \\
					& 4 &          1.074 & 1.023 &             2.314 &           0.322 &	0.412 &      \textbf{1.618} \\
					& 6 &         0.904 &	0.869 &   \textbf{1.652} &            0.109 &	0.615 &              9.521 \\
					\bottomrule
				\end{tabular}
			\end{center}
			\caption{Relative accuracy of the values predicted by the SOSE (with $r=2$) and the UPR against those obtained by the LP for the contract negotiation application given in (\ref{subsec:opt.val}). Best test max deviation marked in bold font (smaller is better).}
			\label{tab:ours.upr.aarc}
		\end{table}
		
		\subsection{Shape-Constrained Optimal Transport Maps and Color Transfer}\label{subsec:opt.transport}
		
		An \emph{optimal transport map} is a function that maps one probability measure to another while incurring minimum cost. In many applications, it is of interest to determine an optimal transport map given two measures and a cost function \cite{peyre2019computational}. Interestingly, the problem of computing an optimal transport map can be related back to shape-constrained regression, as optimal transport maps are known to have specific shapes when the cost function under consideration or the measures they are defined over have certain properties. For example, if the cost function is the $l_2$-norm and one of the measures is continuous with respect to the Lebesgue measure, the Brenier theorem states that the optimal transport map is uniquely defined as the gradient of a convex function~\cite{brenier1991polar}. Following \cite{paty2020regularity}, rather than observing these properties of the map a posteriori, we use these shape constraints as regularizers when computing the optimal transport maps. This gives rise to shape-constrained regression problems. To solve these, \cite{paty2020regularity} propose an approach that can be viewed as a CLSE-based approach. We propose to use instead the SOSE, which we show is particularly well-suited to this application. 
		
		To better illustrate our method, we focus on the concrete application of \emph{color transfer}, though our methodology is applicable more widely to, e.g., the other applications mentioned in \cite{paty2020regularity} and voice transfer. The color transfer problem is defined by two images, the \emph{input image} and the \emph{target image}  \cite{rabin2014adaptive}. The goal is to transfer the colors of the target image to the input image, thereby creating a third image, the \emph{output image}; see Figure \ref{fig:illustration.color.transfer}. We now describe how the color transfer problem can be reformulated as a sequence of shape-constrained regression problems, following \cite{paty2020regularity}. Each pixel in the input and target images is associated to an RGB color triple in $[0,1]^3$, once each entry is normalized by 256. We define a discrete measure 
		$\mu= \sum_{i=1}^{\tilde{N}}a_i \delta_{x_i} ~\left( \text{resp. } \nu= \sum_{j=1}^{\tilde{M}}b_j \delta_{y_j}\right)$ over the space of colors in the input (resp. target) image by taking $x_i \in [0,1]^3, i=1,\ldots \tilde{N},$ (resp. $y_j \in [0,1]^3, j=1,\ldots,\tilde{M},$) to be the distinct color triples in the input (resp. target) image, with $\tilde{N}$ (resp. $\tilde{M}$) being less than or equal to the number of pixels in the input (resp. target) image, and $a_i$ (resp. $b_j$) to be the ratio of number of pixels of color $x_i$ (resp. $y_j$) to the total number of pixels. The idea is then to search for a function $f^*:[0,1]^3 \rightarrow \mathbb{R}$ that minimizes the 2-Wasserstein distance between the push-forward of $\mu$ under $\nabla f^*$ and $\nu$, under certain shape constraints. This is written as the following optimization problem:
		\begin{equation}\label{eq:problem.opt.transport}
		\begin{aligned}
		&\inf_{f,P \in \mathbb{R}^{\tilde{N} \times \tilde{M}}} &&\sum_{i,j} P_{ij}||\nabla f(x_i)-y_j||^2\\
		&\text{s.t. } && P\geq 0,~P1_{\tilde{M}}=a,~P^T1_{\tilde{N}}=b\\
		& &&\nabla f \text{ is $L$-lipschitz and } f \text{ is $\ell$-strongly convex over $[0,1]^3$},
		\end{aligned}
		\end{equation}
		where $L$ and $\ell$ are parameters of the problem. (As a reminder, a function $g:\mathbb{R}^m \rightarrow \mathbb{R}^n$ is $L$-Lipschitz over a box $B$ if $||g(x)-g(y)|| \leq L\cdot ||x-y||$, for all $x,y \in B$, where $||\cdot||$ is some norm. Similarly, a function $f:\mathbb{R}^n \mapsto \mathbb{R}$ is $\ell$-strongly convex over $B$ if $H_f(x) \succeq \ell I$, for all $x \in B$.) We derive from the optimal solution $f^*$ to (\ref{eq:problem.opt.transport}), the optimal transport map (or color transfer map) $\nabla f^*: [0,1]^3 \rightarrow [0,1]^3.$ To obtain the output image, we simply apply $\nabla f^*$ to the RGB triple of each pixel in the input image to obtain a new RGB triple (i.e., the new color of the pixel) for that pixel. In this context, smaller $L$ gives rise to more uniform colors whereas larger $\ell$ increases the contrast; see Figure \ref{fig:comparison.color.transfer}.
		
		In its current form however, problem (\ref{eq:problem.opt.transport}) is not quite a shape-constrained regression problem of the type (\ref{eq:opt.tg}) or (\ref{eq:opt.th}). This is due to the matrix variable $P$ which makes the problem non-convex. To circumvent this issue, we use alternate minimization: we fix $f$ and solve for $P$ using, e.g., Sinkhorn's algorithm (see \cite{peyre2019computational}). We then fix $P$ and solve for $f$. If we parametrize $f$ as a polynomial (with $P$ fixed), we obtain a shape-constrained polynomial regression problem:
		\begin{equation} \label{eq:shape.constr.regression.opt.transp}
		\begin{aligned}
		&\inf_{f \in P_{3,d}} &&\sum_{i,j} P_{ij}||\nabla f(x_i)-y_j||^2\\
		&\text{s.t. } && \ell \cdot I \preceq H_f(x) \leq L \cdot I,~\forall x\in [0,1]^3,
		\end{aligned}
		\end{equation}
		which we solve using the sos techniques from Section \ref{sec:sose}. We iterate this process until convergence.
		
		An example of the output images obtained via this process is given in Figure \ref{fig:illustration.color.transfer}. Additional illustrations can be found in Figure \ref{fig:comparison.color.transfer} for different values of $l$ and $L$ with $d=4$ and $r=3$. The color transfer application works particularly well for the SOSE as the number of features is small (equal to 3), the number of data points is very large, as it corresponds to the number of pixels in the images, and as a large number of new predictions need to be made (one per pixel of the input image). In contrast, the CLSE approach considered in \cite{paty2020regularity} requires the authors to segment the images via k-means clustering to limit computation time. Pre-processing of this type can lead to undesirable artifacts in the output image and grainy texture, which our method avoids. 
		
		\begin{figure}[H]
			\centering
			\begin{subfigure}{.2\textwidth}
				\centering
				\includegraphics[width=.75\linewidth]{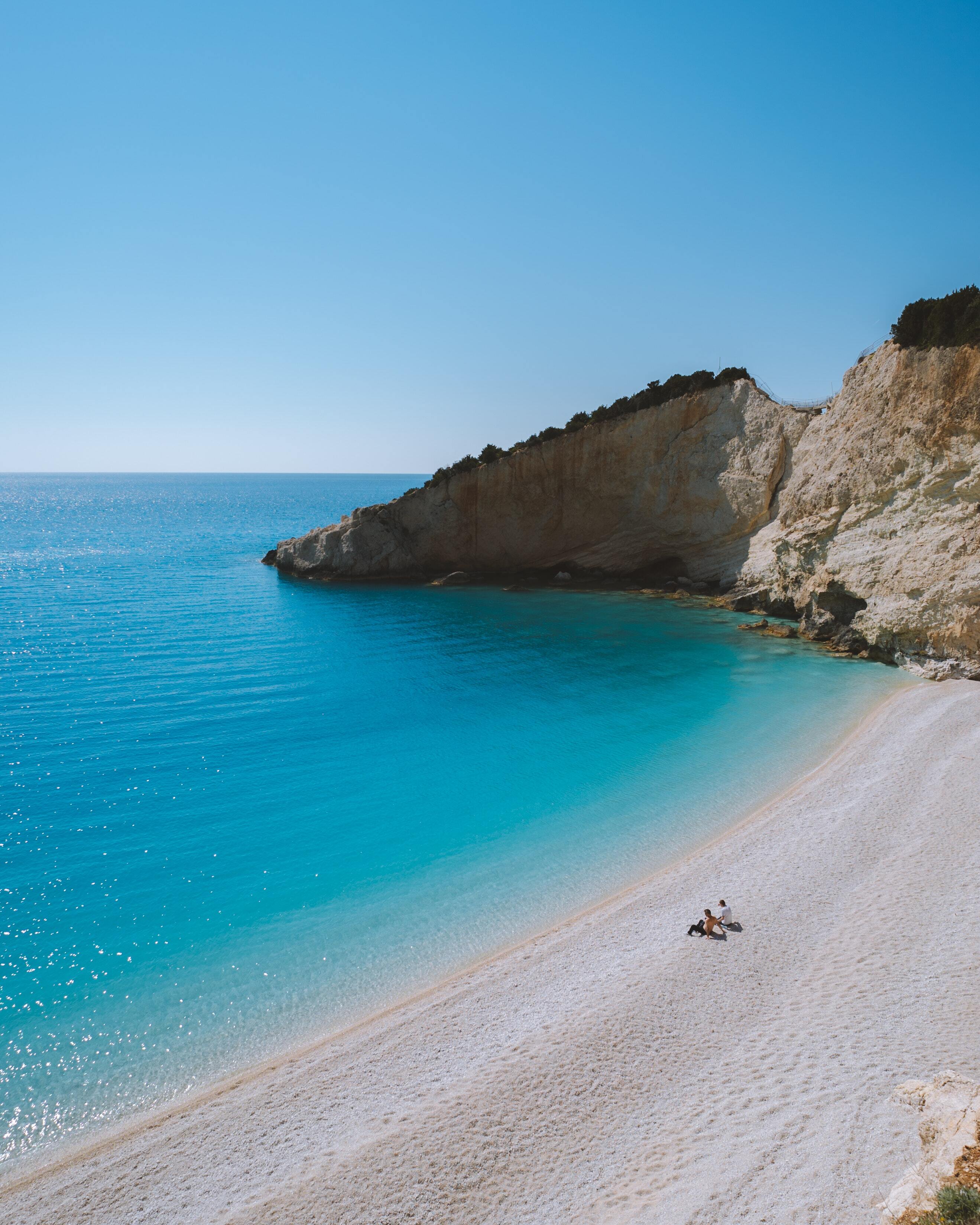}  
				\caption{\footnotesize Input image}
			\end{subfigure}
			\begin{subfigure}{.2\textwidth}
				\centering
				\includegraphics[width=1\linewidth]{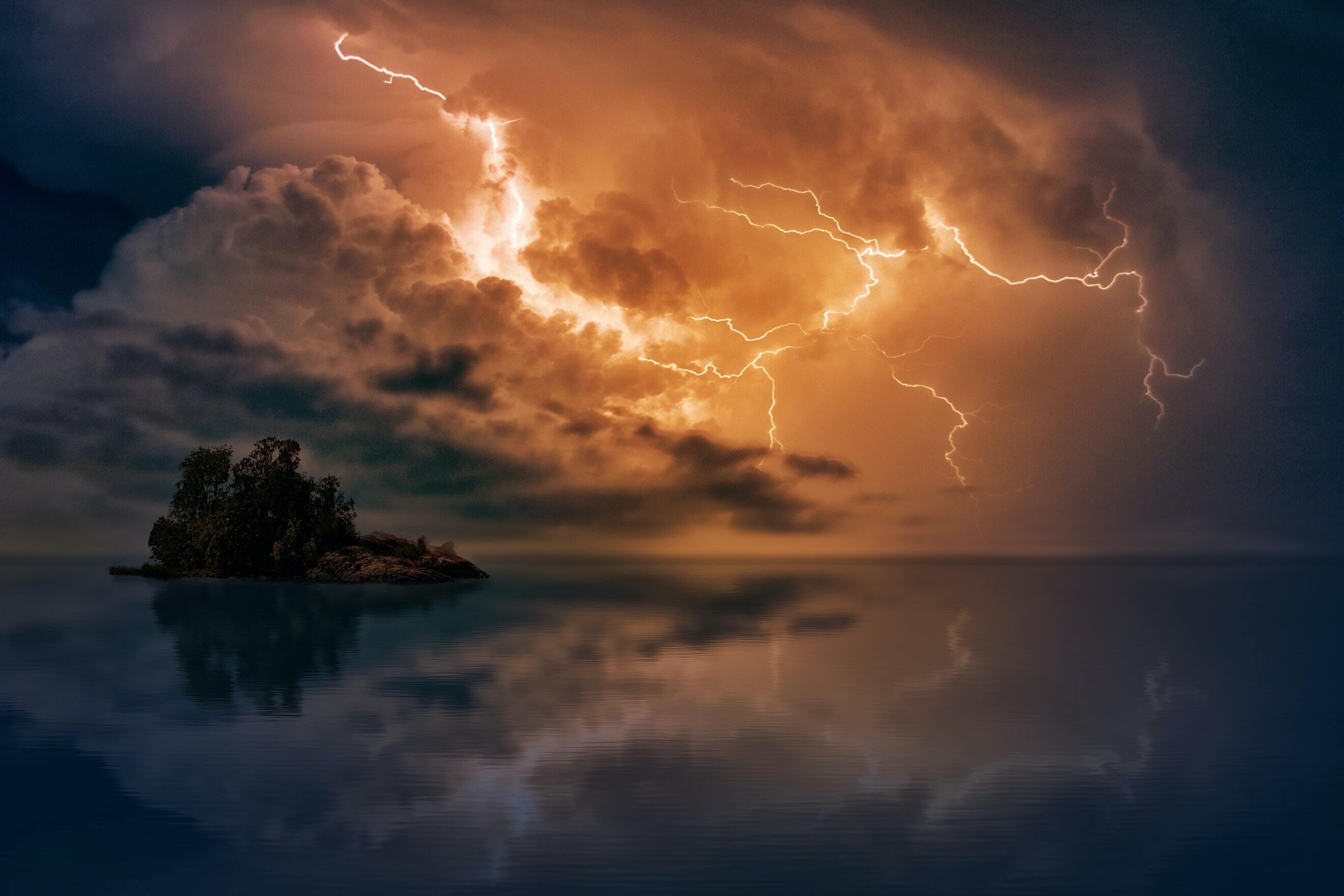}  
				\caption{\footnotesize Target image}
			\end{subfigure}
			\begin{subfigure}{.2\textwidth}
				\centering
				\includegraphics[width=0.85\linewidth]{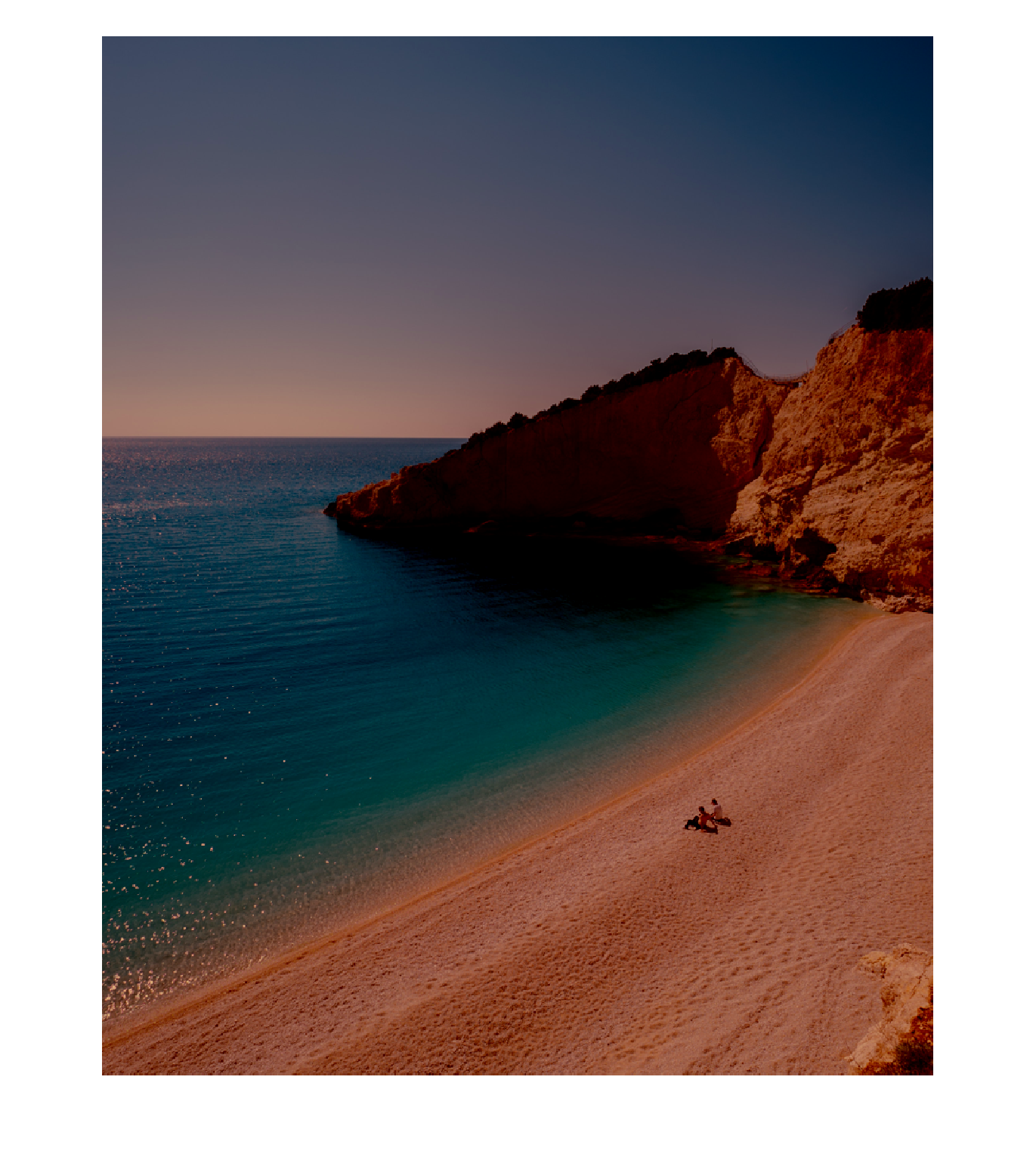}  
				\caption{\footnotesize Output image}
			\end{subfigure}
			\caption{The color transfer problem in (\ref{subsec:opt.transport}) with $\ell=1$ and $L=10$.}
			\label{fig:illustration.color.transfer}
		\end{figure}

	\newpage
	
	%
	
	


	\appendix
	

			\section{Experimental Setup and Additional Figures for Section \ref{sec:sose}} \label{appendix:section2}
			
			\subsection{Experimental Setup}\label{appendix:specs}
			\paragraph{Machine Specifications.} All experiments were performed on a MacBook Pro (2.6 GHz 6-Core Intel Core i7 processor and 16 GB RAM).
			
			\paragraph{Dataset generation for synthetic experiements.} We generate $m$ data points $X_i$ independently and uniformly from the $n$-dimensional box $[0,1]^n$. We define two separate functions:
			$$f_1(w_1,\ldots,w_n)=(w_1+\ldots+w_n) \log(w_1+\ldots+w_n) \text{ and } f_2(w_1,\ldots,w_n)=e^{\sqrt{w_1^2+\ldots+w_n^2}},$$
			which are both convex but not monotonous in any direction. To generate $Y_i$, we let $Y_i=f(X_i)+\nu_i$ where $f$ is either $f_1$ or $f_2$ and the $\nu_i's$ are sampled independently from a standard normal. We refer to the square root of the optimal value of the optimization problem (\ref{eq:opt.bg}) as the train root mean squared error (or train RMSE). The test RMSE is obtained by generating an additional random 1000 points $X_i$ and evaluating the predicted values against the true values $f(X_i)$ without adding noise. The optimization solver times are given in seconds. When fitting these functions, we use (\ref{eq:opt.tg}) and produce $\tg$ in light of the shape constraints.
			
			\paragraph{Obtaining the SOSEs.} To solve the SDPs (\ref{def:tg}), we used MATLAB R2020b, MOSEK~\cite{MOSEK}, and YALMIP (release R20200930)~\cite{yalmip}. If we further wished to speed up the solving times of our SDPs, we could use recent approaches to SDP \cite{o2016conic,li2018qsdpnal,papp2019sum,papp2013semidefinite,majumdar2020recent} which are more scalable. We do not do so here as our results are already competitive.
			
			\paragraph{Obtaining the CLSE.}
			
			We use the implementation proposed by \cite{chen2020multivariate}. The original implementation can be found at \url{https://github.com/wenyuC94/ConvexRegression/}. It uses Julia 1.5 programming language with the default parameters for their best performing algorithm using Augmentation Rule 2 specified in the demo code: \url{https://github.com/wenyuC94/ConvexRegression/blob/master/demo_2stage.jl}. 
			To improve the performance of CLSE on our task we increased the number of iterations to 1000. Note that the CLSE thus computed is not exactly the CLSE defined in Section \ref{sec:sose}---in particular, for the version used in \cite{chen2020multivariate}, evaluation can be done by taking the maximum of affine pieces evaluated at the new point. We have made the choice to use this version to show that, even with computational improvements, there are regimes where the SOSEs are faster to compute than the CLSE.
			
			\paragraph{Obtaining the Max-Affine Estimator.} We used the algorithm (Algorithm 1) described in \cite{ghosh2019max}. We contacted the author for an original implementation; in the absence of a response we implemented the algorithm outlined in the paper  pseudocode. The algorithm was implemented in Python 3.8.
			Our code can be found at \url{https://github.com/mcurmei627/dantzig/SOSE_CLSE_MA/MaxAffine}. Algorithm 1 however necessitates initial parameter estimates which can be obtained via two different initialization processes. We refer to \emph{Spect MAE} when we proceed with the initialization given in Algorithms 2 and 3. We refer to \emph{Rand MAE} when we proceed with a random initialization. Both initializations require specifying a number $M$ of seeds. To obtain this number, we tried $M=2^j$ for $j=1,\ldots,8$ and picked the best one based on cross-validation. We also need to specify $k$, the number of affine pieces. The columns marked \emph{Spect Opt} and \emph{Rand Opt} pick $k$ via cross-validation as well. The column marked \emph{Rand} takes $k=\binom{n+2}{2}$. The idea here is to let the amount of parametrization be of the same order as that of the SOSE, which is $\binom{n+d}{d}$.

			\subsection{Additional Experiments for Section \ref{subsec:sose.computation} and \ref{subsec:sose.comparison}} \label{subsec:add.exp}
			
			Figure \ref{fig:train_test_rmse_2} (resp. Table \ref{tab:gen.error_2}) is an analog to Figure \ref{fig:train_test_rmse} (resp. Table \ref{tab:gen.error}) for $f=f_2$. 
			\begin{figure}[]
				\begin{subfigure}{\textwidth}
					\centering
					\includegraphics[width=0.8\linewidth]{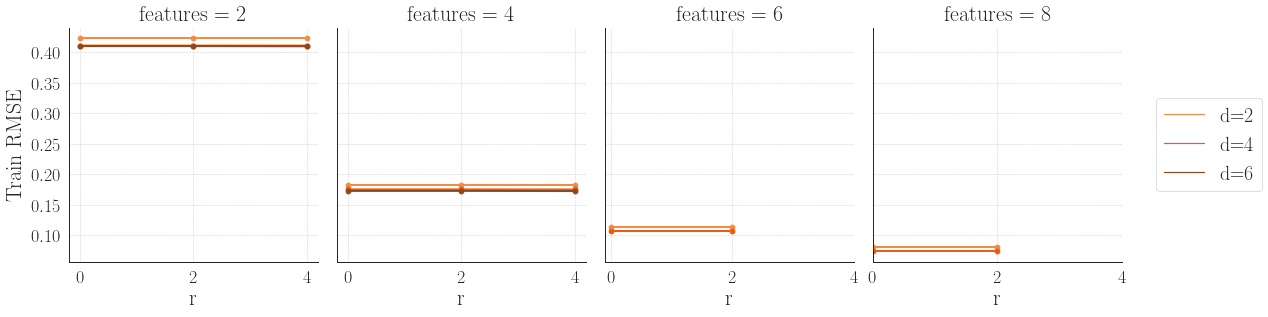}  
					\caption{\footnotesize Train RMSE.}
					\label{fig:train_rmse_2}
				\end{subfigure}
				\begin{subfigure}{\textwidth}
					\centering
					\includegraphics[width=0.8\linewidth]{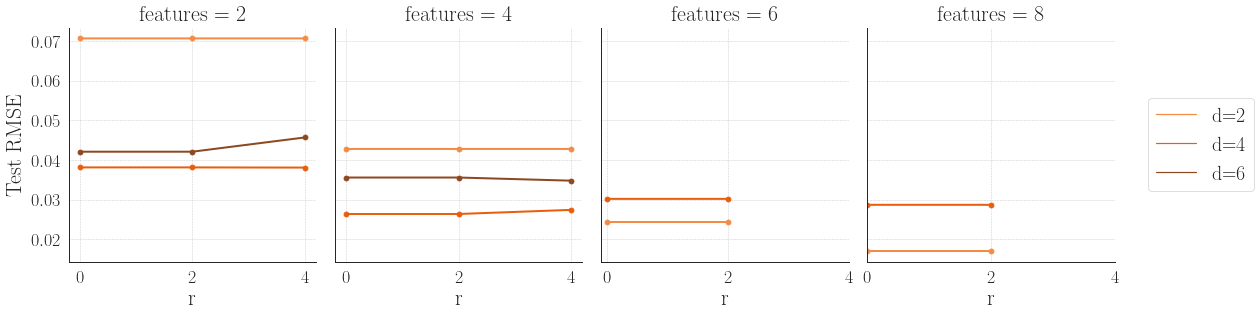}  
					\caption{\footnotesize Test RMSE.}
					\label{fig:test_rmse+2}
				\end{subfigure}
				\caption{Train and Test RMSE for (\ref{eq:opt.tg}) with $m=10,000$ data points generated as described in Appendix \ref{appendix:specs}, as the number $n$ of features, the degree $d$ of the polynomials, and the degree $r$ of the semidefinite programming hierarchy. Lighter color corresponds to lower $d$.}
				\label{fig:train_test_rmse_2}
			\end{figure}
			
			\begin{table}[!ht]
				\footnotesize
				\centering
				\begin{tabular}{|c|c|c|c|c|>{\columncolor[gray]{0.9}}c|>{\columncolor[gray]{0.9}}c|>{\columncolor[gray]{0.9}}c|c|>{\columncolor[gray]{0.9}}c|c|c|c|>{\columncolor[gray]{0.9}}c|>{\columncolor[gray]{0.9}}c|>{\columncolor[gray]{0.9}}c|}
					\cline{3-16}
					\multicolumn{2}{c|}{~} & \multicolumn{6}{c|}{\textbf{MAE}} & \multicolumn{2}{c|}{\textbf{CLSE}} & \multicolumn{6}{c|}{\textbf{SOSE}}\\ \hline
					\multicolumn{2}{|c|}{RMSE} & \multicolumn{3}{c|}{Train} &   \multicolumn{3}{c|}{\cellcolor[gray]{0.9} Test} &  Train & Test  & \multicolumn{3}{c|}{Train} & \multicolumn{3}{c|}{\cellcolor[gray]{0.9} Test} \\ \hline
					\multirow{2}{*}{$m$} & \multirow{2}{*}{$n$}  & Spect & Rand & ~ & Spect & Rand & ~ & ~ & ~ & \multirow{2}{*}{$d=2$}& \multirow{2}{*}{$d=4$}& \multirow{2}{*}{$d=6$} & ~ & ~ & ~ \\
					~ & ~ & Opt & Opt & \multirow{-2}{*}{Rand}  & Opt & Opt & \multirow{-2}{*}{Rand} & ~ & ~ & ~ & ~ & ~ &\multirow{-2}{*}{$d=2$} & \multirow{-2}{*}{$d=4$} &\multirow{-2}{*}{$d=6$} \\ \hline
					\multirow{5}{*}{2000}& 2 & 0.473 & 0.414 & 0.418 & 0.223 & 0.078 & 0.109 & 0.410 & 0.056 & 0.422 & 0.411 & 0.410 & 0.071 & \textbf{0.038} & 0.042 \\
					~ & 3 & 0.294 & 0.254 & 0.272 & 0.146 & 0.070 & 0.108 & 0.244 & 0.049 & 0.259 & 0.254 & 0.251 & 0.059 & \textbf{0.036} & 0.045 \\ 
					~ & 4 & 0.230 & 0.183 & 0.192 & 0.150 & 0.075 & 0.096 & 0.153 & 0.057 & 0.182 & 0.175 & 0.173 & 0.043 & \textbf{0.026} & 0.036 \\ 
					~ & 5 & 0.170 & 0.147 & 0.162 & 0.113 & 0.079 & 0.096 & 0.111 & 0.060 & 0.141 & 0.135 & 0.132 & \textbf{0.031} & 0.033 & 0.039 \\
					~ & 6 & 0.145 & 0.110 & 0.128 & 0.105 & 0.069 & 0.082 & 0.147 & 0.112 & 0.113 & 0.108 & 0.099 & \textbf{0.024} & 0.030 & 0.042 \\ \hline
					\multirow{5}{*}{5000} & 2 & 0.467 & 0.410 & 0.417 & 0.221 & 0.059 & 0.090 & 0.412 & 0.065 & 0.416 & 0.408 & 0.408 & 0.070 & \textbf{0.028} & 0.028 \\
					~ & 3 & 0.291 & 0.256 & 0.264 & 0.148 & 0.063 & 0.083 & 0.264 & 0.099 & 0.257 & 0.253 & 0.251 & 0.057 & \textbf{0.022} & 0.030 \\
					~ & 4 & 0.213 & 0.183 & 0.196 & 0.126 & 0.067 & 0.091 & 0.194 & 0.098 & 0.182 & 0.177 & 0.176 & 0.041 & \textbf{0.019} & 0.024 \\
					~ & 5 & 0.182 & 0.145 & 0.162 & 0.120 & 0.064 & 0.092 & 0.159 & 0.097 & 0.139 & 0.136 & 0.134 & 0.030 & \textbf{0.023} & 0.028 \\ 
					~ & 6 & 0.146 & 0.116 & 0.128 & 0.099 & 0.064 & 0.076 & 0.136 & 0.091 & 0.113 & 0.109 & 0.105 & 0.022 & \textbf{0.021} & 0.031 \\ \hline
					\multirow{5}{*}{10000} & 2 & 0.466 & 0.413 & 0.418 & 0.220 & 0.067 & 0.090 & 0.413 & 0.063 & 0.416 & 0.409 & 0.409 & 0.067 & 0.023 & \textbf{0.022} \\ 
					~ & 3 & 0.291 & 0.256 & 0.264 & 0.148 & 0.058 & 0.081 & 0.263 & 0.084 & 0.257 & 0.252 & 0.252 & 0.055 & \textbf{0.019} & 0.020 \\ 
					~ & 4 & 0.235 & 0.183 & 0.199 & 0.148 & 0.057 & 0.095 & 0.212 & 0.122 & 0.181 & 0.176 & 0.176 & 0.040 & \textbf{0.014} & 0.016 \\ 
					~ & 5 & 0.182 & 0.145 & 0.164 & 0.118 & 0.059 & 0.092 & 0.166 & 0.105 & 0.139 & 0.136 & 0.135 & 0.029 & \textbf{0.016} & 0.020 \\ 
					~ & 6 & 0.148 & 0.118 & 0.128 & 0.097 & 0.058 & 0.070 & 0.148 & 0.105 & 0.112 & 0.110 & 0.106 & 0.021 & \textbf{0.015} & 0.027 \\ \hline
				\end{tabular}
				\caption{Comparison of the train and test RMSEs, for different values of $m$ and $n$, of the SOSE $\tg$ computed in (\ref{eq:opt.tg}) (with $r=1$), for the CLSE, and for the MAE under different initializations and choice of hyperparameters; see Appendix \ref{appendix:specs}. Best test RMSE marked in bold font.}
				\label{tab:gen.error_2}
			\end{table}

		\section{Proofs of Results in Section \ref{sec:consistency} }

			\subsection{Proof of Lemma \ref{lem:laurent_adjacent} } \label{appdx:laurent_adjacent}
			
			For this proof, we need to introduce some notation. Let $\mathbb{N}_d^n$ (resp. $\tilde{\mathbb{N}}_d^n$) be the set of vectors $\alpha$ in $\mathbb{N}^n$ such that $|\alpha| \leq d$ (resp. $=d$). Given a sequence $(w_{\alpha})_{\alpha \in \mathbb{N}^n}$, we define $M(w)$ to be the matrix indexed by $\mathbb{N}^n$ with $(\alpha,\beta)^{th}$ entry $w_{\alpha+\beta}$ for $\alpha,\beta \in \mathbb{N}^n$. Likewise, given a sequence $(w_{\alpha})_{\alpha \in \mathbb{N}^n_{2d}}$,  we define $M_{d}^n(w)$ to be the moment matrix of order $d$ indexed by $\mathbb{N}_d^n$, with $(\alpha,\beta)^{th}$ entry $w_{\alpha+\beta}$, for $\alpha,\beta \in \mathbb{N}_d^n$. Finally, given a sequence $(w_{\alpha, \alpha'})_{\alpha \in \mathbb{N}^n_{2d}, \alpha' \in \tilde{\mathbb{N}}^n_2}$, we define $\tilde{M}_d^n(w)$ to be the moment matrix of order $d$ indexed by $\mathbb{N}_d^n \times \tilde{\mathbb{N}^n_1}$ with $((\alpha, \alpha'),(\beta, \beta'))^{th}$ entry $w_{(\alpha+\beta, \alpha'+\beta')}$ for $\alpha,\beta \in \mathbb{N}^n_{d}, \alpha', \beta' \in \tilde{\mathbb{N}}^n_1$. 
			
			As we will mostly be working with $\tilde{M}_d^n(w)$ moving forward, we introduce the \emph{Riesz linear functional} which is a little more intelligible in this setting. Given a sequence $(w_{\alpha, \alpha'})_{\alpha \in \mathbb{N}^n_{2d}, \alpha' \in \tilde{\mathbb{N}}^n_2}$, the Riesz linear functional $L_w: \mathbb{R}_{2d}[x] \cdot \mathbb{R}_2[y] \rightarrow \mathbb{R}$ is defined by:
			$$\sum_{(\alpha, \alpha') \in \mathbb{N}_d^n \times \tilde{\mathbb{N}}_2^n} f_{\alpha, \alpha'} x^{\alpha} y^{\alpha'} \mapsto L_w(f)=\sum_{(\alpha, \alpha') \in \mathbb{N}_d^n \times \tilde{\mathbb{N}}_2^n} f_{\alpha, \alpha'} w_{\alpha,\alpha'}.$$ When there is no ambiguity, we drop the index $w$ from $L_w$. We use the following lemmas.
			\begin{lemma} \label{lemma:ineq}
				Let $(w_{\alpha, \alpha'})_{\alpha \in \mathbb{N}^n_{2d}, \alpha' \in \tilde{\mathbb{N}}^n_2} \in \mathbb{R}$ be a sequence. If $\tilde{M}_d^n(w) \succeq 0$, then:
				\begin{enumerate}
					\item $L(x^{2\alpha}y_j^2) \geq 0$, $\forall 0 \leq |\alpha| \leq d$, $\forall j=1,\ldots,n$.
					\item $L(x^{\alpha+\beta} y^{\alpha'+\beta'})^2 \leq L(x^{2\alpha}y^{2\alpha'}) \cdot L(x^{2\alpha'}y^{2\beta'})$, where $\alpha, \beta \in \mathbb{N}_d^n$ and $\alpha',\beta' \in \tilde{\mathbb{N}}_1^n.$ 
				\end{enumerate}
			\end{lemma}
			The proof follows immediately from the fact that $\tilde{M}_d^n(w) \succeq 0$ and the definition of $L$.
			
			\begin{lemma} \cite[Theorem 4.12]{laurent2009sums}
				Let $w \in \mathbb{R}^n$, let $C>0$ and let $K\mathrel{\mathop{:}}=[-C,C]^n$. Then $w$ has a representing measure supported by the set $K$ if and only if $M(y) \succeq 0$ and there is a constant $C_0>0$ such that $|w_{\alpha}| \leq C_0C^{|\alpha|}$ for all $\alpha \in \mathbb{N}^n$. 
			\end{lemma}
			
			We now give two propositions before proving Lemma \ref{lem:laurent_adjacent}.
			
			\begin{proposition} \label{prop:bounded}
				Let $(w_{\alpha, \alpha'})_{\alpha \in \mathbb{N}^n_{2d}, \alpha' \in \tilde{\mathbb{N}}^n_2} \in \mathbb{R}$ be a sequence.
				If $\tilde{M}_d^n(w) \succeq 0$, then $$L(x^{\alpha}y^{\alpha'}) \leq \lambda_d \mathrel{\mathop{:}}=\max_{i=1,\ldots,n, j=1,\ldots,n} \{L(x_i^{2d}y_j^2), L(y_j^2)\}, \text{ for any } 0 \leq |\alpha| \leq 2d \text{ and for } |\alpha'|=2.$$
			\end{proposition}
			
		\begin{proof}
			First, we prove the claim: $L(x^{2\alpha}y_j^2) \leq \lambda_d$ for any $0 \leq |\alpha| \leq d$, $i=1,\ldots,n$ and $j=1,\ldots,n$.
			We proceed by induction on $d$.  If $d=0,1$, the result is obvious. Assume $d \geq 1$ and the result holds for $d-1$, i.e., $L(x^{2\alpha}y_j^2) \leq \lambda_{d-1}$ for any $0 \leq |\alpha|\leq d-1$ and $j=1,\ldots,n$. We show that $L(x^{2 \alpha}y_j^2) \leq \lambda_{d}$ for any $0 \leq |\alpha| \leq d$ and $j=1,\ldots,n$. Let $j \in \{1,\ldots,n\}$ and $\alpha$ such that $|\alpha|=d-1$.  It is enough to show that $L(x^{2\alpha}y_j^2) \leq \lambda_d$, as the case where $|\alpha| \leq d-2$ will then follow from the induction hypothesis. If there exists $k \in \{1,\ldots,n\}$ such that $L(x^{2\alpha}y_j^2) \leq L(y_k^2)$, then the result holds. If not, then for all $k \in \{1,\ldots,n\}$, $L(x^{2\alpha}y_j^2) \geq L(y_k^2)$. 
			Construct vectors $\gamma, \gamma'$ such that $\gamma+\gamma'=2\alpha$, $|\gamma|=d-2$ and $|\gamma'|=d$. We then have from Lemma \ref{lemma:ineq}, combined to the induction hypothesis and the fact that $L(x^{2\alpha}y_j^2) \geq L(y_k^2)$ for all $k$,
			$L(x^{2\alpha}y_j^2)^2 \leq L(x^{2\gamma} y_j^2) \cdot L(x^{2\gamma'}y_j^2) \leq L(x^{2\alpha}y_j^2) \cdot L(x^{2\gamma'}y_j^2).$
			Thus, $L(x^{2\alpha}y_j^2) \leq L(x^{2\gamma'}y_j^2)$. As $|2\gamma'|=2d$, the claim follows.
			Consider now $0<|\alpha| \leq 2d$ and $|\alpha'|=2$. Write $(\alpha,\alpha')=(\beta,\beta')+(\gamma,\gamma')$ where $|\beta|, |\gamma| \leq d$ and $|\beta'|=|\gamma'| =1$. Using Lemma \ref{lemma:ineq}, we have that 
			$L(x^{\alpha}y^{\alpha'})^2 \leq L(x^{2\beta}y^{2\beta'}) \cdot L(x^{2\gamma}y^{2\gamma'}) \leq \lambda_d^2,$
			which concludes the proof. 
			\end{proof}
			
			\begin{proposition} \label{prop:laurent}
				As in Lemma \ref{lem:laurent_adjacent}, let $f(x,y)=y^TM(x)y$ and let $\Psi_{d'}(x,y)=\sum_{i=1}^n y_i^2(1+\sum_{j=1}^n x_j^{2d'})$. Consider the program:
				\begin{align}\label{eq:opt}
				\epsilon_t^* \mathrel{\mathop{:}}= \inf_{\{w_{\alpha,\alpha'} \}\in \mathbb{R}^{\mathbb{N}^n_{2t-2} \times \tilde{\mathbb{N}}_2^n}} f^Tw \text{ s.t. } \tilde{M}_{t-1}(w)\succeq 0, w^T \Psi_{t} \leq 1,
				\end{align}
				where we use $f$ and $\Psi_t$ for the coefficients of the corresponding polynomials. The following hold:
				\begin{enumerate}[(i)]
					\item $- \infty <\epsilon_t^* \leq 0$ and the infimum is attained in (\ref{eq:opt}).
					\item For $\epsilon \geq 0$, the polynomial $f+\epsilon\Psi_t$ is a sum of squares if and only if $\epsilon \geq -\epsilon_t^*$. In particular, $f$ is a sum of squares if and only if $\epsilon_t^*=0$. 
					\item If $f$ is nonnegative on $[-1,1]^n \times \mathbb{R}^n$, then $\lim_{t \rightarrow \infty} \epsilon_t^*=0$.
				\end{enumerate}
			\end{proposition}
			
			\begin{proof}
			The proof is identical to that of \cite[Proposition 7.9]{laurent2009sums}, noting that $$1 \geq w^T \Psi_t= L(\Psi_t(x,y)) \geq \sum_{i=1}^n L(y_i^2)+\sum_{i=1}^n \sum_{j=1}^{n} L(y_i^2 x_j^{2t}) \geq 0$$
			implies that $\lambda_t \leq 1$ (and hence the conclusion of Proposition \ref{prop:bounded}) and that $f(x,y)$ being nonnegative over $[-1,1]^n \times \mathbb{R}^n$ implies nonnegativity over $[-1,1]^{2n}$.
			\end{proof}
			
			\begin{proof}[Proof of Lemma \ref{lem:laurent_adjacent}.]
			The result follows from Proposition \ref{prop:laurent}, (ii) and (iii). Let $\epsilon>0$. As $\lim_{d \rightarrow \infty} \epsilon_d^*=0$, $\exists d_0' \in \mathbb{N}$ such that $\epsilon_{d'}^* \geq -\epsilon$ for all $d' \geq d_0'$. Thus, $f+ \epsilon \Psi_{d'}$ is sos for all $d' \geq d_0'$.
			\end{proof}

		\subsection{Proof of Proposition \ref{prop:weierstrass}} \label{appendix:weierstrass}
		The proof of Proposition \ref{prop:weierstrass} requires the following lemma.
		\begin{lemma}[e.g., Theorem 6.7 in \cite{fellhauer2016approximation}]\label{lemma:Bernstein}	
			Consider the Bernstein multivariate polynomial of degree $d$ and in $n$ variables, defined over $[0,1]^n$:
			\begin{footnotesize}
				\begin{align*}
				B_d(f,x)=\sum_{j_1+\ldots+j_n=d} f\left(\frac{j_1}{d},\ldots,\frac{j_n}{d}\right) C_d^{j_1}\ldots C_d^{j_n}x_1^{j_1}(1-x_1)^{d-j_1}\ldots x_n^{j_n}(1-x_n)^{d-j_n},
				\end{align*}
			\end{footnotesize}
			where $C_d^{j_i}=\frac{d!}{j_i!(d-j_i)!}.$ Let $m$ be an integer and assume that $f$ is $m$ times continuously differentiable. Let $k=(k_1,\ldots,k_n)$ be a multi-index such that $\sum_{i=1}^n |k_i|\leq m$ and denote by $\partial^k f=\frac{\partial^k f(x)}{\partial x_1^{k_1}\ldots \partial x^{k_n}}.$	
			Then, $\forall k$ such that $\sum_{i=1}^{n}|k_i|\leq m$, $\lim_{d\rightarrow \infty}\sup_{x\in [0,1]^n}|\partial^k B_d(f,x)-\partial^k f(x)| =0$.
		\end{lemma}
		
		This result can easily be extended to hold over any box $B \subset \mathbb{R}^n$ by simply scaling and translating the variables in the Bernstein polynomials. 
		
		\begin{proof}[Proof of Proposition \ref{prop:weierstrass}.] We start with part (i).  Let $\epsilon>0$ and $M \mathrel{\mathop{:}}=\max_{x \in B} \frac12 \sum_{i=1}^n x_i^2$. From Lemma \ref{lemma:Bernstein}, as $f$ is twice continuously differentiable, there exists a polynomial $q$ of degree $d$ s.t.
		\begin{align}\label{eq:second.deriv}
		\sup_{x \in B} |f(x)-q(x)| \leq \frac{\epsilon}{2(1+2nM)} \text{ and } \sup_{x \in B} \left|\frac{\partial^2 f(x)}{\partial x_i \partial x_j}- \frac{\partial^2 q(x)}{\partial x_i \partial x_j}\right| \leq \frac{\epsilon}{2(1+2nM)}, \forall i,j.
		\end{align}
		Let $\Delta H(x)=H_q(x)-H_f(x)$. As $f$ and $q$ are twice continuously differentiable, the entries of $\Delta H(x)$ are continuous in $x$. This implies that $x \mapsto \lambda_{\min}(\Delta H(x))$ is continuous \cite[Corollary VI.1.6]{bhatia2013matrix}. Hence, if we let  $\Lambda\mathrel{\mathop{:}}=\min_{x\in B} \lambda_{\min}(\Delta H(x)),$ it follows that there exists $x_0 \in B$ such that $\Lambda=\lambda_{\min} \Delta H(x_0).$ We now bound this quantity. Recall that for a symmetric $n \times n$ real-valued matrix $M$, $||M||_{\max}$ is the max-norm of $M$, i.e., its largest entry in absolute value, $||M||_2=\max \{|\lambda_{\min}(M)|,|\lambda_{\max}(M)|\},$ and $||M||_2 \leq n ||M||_{\max}$. From (\ref{eq:second.deriv}), we have that 
		$||\Delta H(x_0)||_{\max} \leq \frac{\epsilon}{2(1+2nM)},$
		which implies that   $\max \{|\lambda_{\min}(\Delta H(x_0))|, |\lambda_{\max}(\Delta H(x_0))| \} \leq \frac{n\epsilon}{2(1+2nM)}, $
		and so
		$$-\frac{n\epsilon}{2(1+2nM)}\leq \Lambda \leq \frac{n\epsilon}{2(1+2nM)}.$$ By definition of $\Lambda$, we thus have $\Delta H(x)\succeq -\frac{n\epsilon}{2(1+2nM)}$ for all $x \in B.$ Now, consider $p(x)\mathrel{\mathop{:}}=q(x)+\frac{n\epsilon}{2(1+2nM)}x^Tx.$
		For any $x \in B$, we have
		$|f(x)-p(x)|\leq |f(x)-q(x)|+|q(x)-p(x)| \leq \frac{\epsilon}{2(1+2nM)}+\frac{n\epsilon}{2(1+2nM)}\cdot 2M<\epsilon.$
		Using our previous result on $\Delta H(x)$, the definition of $p$, and the fact that $H_f(x)\succeq 0$,
		$H_p(x)=H_p(x)-H_q(x)+H_q(x)-H_f(x)+H_f(x) \succeq \frac{2n\epsilon}{2(1+2nM)}I-\frac{n\epsilon}{2(1+2nM)}I \succ 0.$
		From this, it follows that there exists a degree $d$ and a polynomial $p \in C_{n,d}$ such that $\sup_{x \in B} |f(x)-p(x)|<\epsilon.$ The definition of $g_d$ as the minimizer of $\sup_{x \in B} |f(x)-g(x)|$ for any $g \in C_{n,d}$ gives us the result.
		
		We now show part (ii). Let $\epsilon>0$ and take $M=\max_{x \in B}||x||_{\infty}$. From Lemma \ref{lemma:Bernstein}, there exists a polynomial $q$ of degree $d$ such that $$\max_{x \in B} |f(x)-q(x)| \leq \epsilon' \text{ and } \max_{x \in B} \left| \frac{\partial f(x)}{\partial x_i}- \frac{\partial q(x)}{\partial x_i} \right| \leq  \epsilon',$$
		where $\epsilon'=\frac{\epsilon}{nM+1}>0$.
		Recall that $I^+ \cap I^- =\emptyset$ and define $\rho$ as in Theorem \ref{thm:mon}. Consider $p(x) \mathrel{\mathop{:}}=q(x)+\epsilon' \sum_{i=1}^n \rho_i x_i.$ Let $x \in B$. We have:
		$$|p(x)-f(x)| \leq |p(x)-q(x)|+|q(x)-f(x)| \leq \epsilon' \sum_{i=1}^n |x_i| + \epsilon' \leq \epsilon' \cdot (nM+1)=\epsilon.$$
		Now, let $i \in I^+$, i.e., $\rho_i=-1$. As $\frac{\partial f(x)}{\partial x_i} \leq K_i^+ \text{ and } \frac{\partial q(x)}{\partial x_i} \leq \frac{\partial f(x)}{\partial x_i} +\epsilon',$ it follows that
		$\frac{\partial p(x)}{\partial x_i} \leq K_i^++ \epsilon'-\epsilon'=K_i^+.$ Likewise, let $i \in I^-$, i.e., $\rho_i=1$. As $\frac{\partial f(x)}{\partial x_i} \geq K_i^- \text{ and } \frac{\partial q(x)}{\partial x_i} \geq \frac{\partial f(x)}{\partial x_i} -\epsilon'$, it follows that $\frac{\partial p(x)}{\partial x_i} \geq K_i^--  \epsilon'+\epsilon'=K_i^-.$ 
		\end{proof}
		We show this result here in the case where $I^+ \cap I^- = \emptyset$ as that is the setting of interest for our consistency result. However, Proposition \ref{prop:weierstrass} (ii) also holds when both $K_i^+$ and $K_i^-$ are finite.
		
		\subsection{Proof of Proposition \ref{prop:lym.glyn}} \label{appendix:lim.glyn}
		
		We first show that our estimators $\bg$ and $\bh$ are uniformly upper bounded and Lipschitz continuous (with Lipschitz constants that do not depend on the data) over certain boxes contained within $B$. For this purpose, we introduce the following notation: let $\eta$ be a scalar such that 
		\begin{align}\label{eq:def.eta}
		0<\eta < \min_{i=1\ldots,m} \frac{u_i-l_i}{2}
		\end{align} and let $B_{\eta}\mathrel{\mathop{:}}=\{x~|~l_i+\eta \leq x_i \leq u_i-\eta,~i=1,\ldots,n \}.$ We have that (\ref{eq:def.eta}) holds if and only if $B_{\eta}$ is full-dimensional and $B_{\eta} \subset B$. The proof of Lemma \ref{lem:prop.gd} uses some ideas of \cite{lim2012consistency}, but has to account for being over a box, rather than over $\mathbb{R}^n$. The proof of Lemma \ref{lem:prop.hd} is new.
		
		\begin{lemma} \label{lem:prop.gd}
			Let $g_d$ be defined as in Proposition \ref{prop:weierstrass} and $\bg$ defined as in (\ref{eq:opt.bg}). Furthermore, let $\eta$ be a scalar such that (\ref{eq:def.eta}) holds. We have the following properties:
			\begin{enumerate}[(i)]
				\item $ \exists c_{\eta}>0$, which is independent of the data $(X_1,Y_1),\ldots(X_m,Y_m)$, such that $|\bg(x)| \leq c_{\eta}$ a.s. for all $x \in B_{3\eta/4}$.
				\item $\exists M_{\eta}>0$, which is independent of the data $(X_1,Y_1),\ldots(X_m,Y_m)$, such that $|\bg(x)-\bg(y)| \leq M_{\eta} ||x-y||$ a.s. for all $x,y \in B_{\eta}$, i.e., $\bg$ is $M_{\eta}$-Lipschitz over $B_{\eta}$.
				\item $\exists N_{\eta}>0$, which is independent of the data $(X_1,Y_1),\ldots(X_m,Y_m)$, such that $|g_d(x)-g_d(y)| \leq N_{\eta} ||x-y||$ for all $x,y \in B_{\eta}$, i.e., $g_d$ is $N_{\eta}$-Lipschitz over $B_{\eta}$.
			\end{enumerate}
		\end{lemma}
		
		\begin{proof} (i) The idea here is to control the value of $\bg$ at the corners and the analytic center of $B$. Convexity of $\bg$ enables us to conclude that $\bg$ is upper and lower bounded a.s. over $B_{3\eta/4}$. We start by using Step 2 in \cite{lim2012consistency} with $\hat{g}_n=g_{m,d}$ and $g^*=g_d$. This provides us with the following a.s. bound on $\frac1m \sum_{i=1}^m\bg^2(X_i)$:
			$$\frac1m \sum_{i=1}^m \bg^2(X_i) \leq 9E[(Y_1-g_d(X_1))^2]+3E[g_d^2(X_1)]=\mathrel{\mathop{:}} \beta \text{ a.s.}
			$$
			We use this bound to show the existence of sample points $X_i$ in the ``corners'' and around the analytic center of $B$ such that $|\bg(X_i)|$ is uniformly bounded (in $m$). To do this, we define for each vertex $i, i=1,\ldots,2^n$, of $B$, a box $B^v_i$ which is included in $B$, has vertex $i$ as a vertex, and has edges of length $\eta/4$. In other words, if vertex $i_0$ of $B$ is given by $(l_1,u_1,u_2,\ldots,u_n)$, then the corresponding box $B^v_{i_0}$ is defined as
			$B^v_{i_0}\mathrel{\mathop{:}}=\{x \in \mathbb{R}^n ~|~ l_1 \leq x_1 \leq l_1+\frac{\eta}{4}, u_2-\frac{\eta}{4} \leq x_2 \leq u_2, \ldots, u_n -\frac{\eta}{4} \leq x_n \leq u_n\}.$
			We further define 
			$B_0^v\mathrel{\mathop{:}}=\{x \in \mathbb{R}^n~|~\frac{u_i+l_i}{2}-\frac{\eta}{8} \leq x_i \leq \frac{u_i+l_i}{2}+\frac{\eta}{8}, i=1,\ldots,n\}.$
			We refer the reader to Figure \ref{fig:proof.lemma.2} for illustrations of these boxes and their relationships to other boxes appearing in the proof. Note that, for all $i=0,\ldots,2^n$, $B^v_i \subset B$ and is full dimensional. However, when $i\geq 1$, $B_i^v \cap B_{\eta/2} = \emptyset$ whereas $B_0^v \subseteq B_{\eta} \subseteq B_{3\eta/4}$. Let $\gamma_i \mathrel{\mathop{:}}=P(X \in B_i^v), i=0,\ldots,2^n, \text{ and } \gamma \mathrel{\mathop{:}}=\min \{\gamma_0, \ldots, \gamma_{2^n}\}.$
			As $B_i^v$ is full-dimensional for all $i$, it follows that $\gamma>0$. Leveraging (16) in \cite[Step 4]{lim2012consistency}, we obtain, for each $i \in \{0,\ldots,2^n\}$ and for a positive scalar $r$ such that $\frac{\beta}{r^2} \leq \frac{\gamma}{2}$, when $m$ is large enough
			$\frac1m \sum_{j=1}^m P(X_j \in B_i^v, |\bg(X_j)| \leq r)=\frac{\gamma}{2}>0.$
			We use this to obtain upper and lower bounds on $\bg(x)$ over $B_{3 \eta/4}$ which only depend on the probability distribution of $X_i$ and $B_{\eta}$ (i.e., these bounds do not depend on the number of data points, nor on the data points themselves). The proof of the lower bound requires us to show that $\bg$ is actually upper bounded over $B_{\eta/2}$. As $B_{\eta/2}$ is a superset of $B_{3\eta/4}$, this will imply that $\bg$ is upper bounded over $B_{3 \eta/4}$.
			\vspace{1mm}
			
			\noindent \textbf{Upper bound:} We show that $B_{\eta/2}$ is a subset of the convex hull of $X_{I(1)},$ $\ldots,$ $X_{I(2^n)}$. This then implies that any $x$ in $B_{\eta/2}$ can be written as a convex combination of these points, and so, using convexity of $\bg$, we can conclude that $\bg(x) \leq r$. To see that $B_{\eta/2}$ is a subset of the convex hull of $X_{I(1)},\ldots, X_{I(2^n)}$, first note that $X_{I(i)} \notin B_{\eta/2}$ for all $i=1,\ldots,2^n$ as $B_i^v \cap B_{\eta/2}= \emptyset$. Hence, either $B_{\eta/2}$ is a subset of convex hull of $X_{I(1)},\ldots,X_{I(2^n)}$ or the two sets are disjoint. We show that the former has to hold. This follows from the fact that $X_0=\frac{1}{2^n} \sum_{i=1}^{2^n} X_{I(i)}$, which is in the convex hull of $X_{I(1)},\ldots,X_{I(2^n)}$, is also in $B_{\eta/2}$. To see this, note that for a fixed component $k$ of the vectors $\{X_{I(i)}\}_i$, there are exactly $2^{n-1}$ of these components that belong to $[l_k,l_k+\frac{\eta}{4}]$ and $2^{n-1}$ that belong to $[u_k-\frac{\eta}{4}, u_k]$. This implies that the $k$-th component of $X_0$ belongs to the interval
			$[\frac{u_k+l_k}{2}-\frac{\eta}{8}; \frac{u_k+l_k}{2}+\frac{\eta}{8}]$. As $\frac{u_k+l_k}{2}+\frac{\eta}{8} \leq u_k -\frac{\eta}{2}$ and $\frac{u_k+l_k}{2}-\frac{\eta}{8} \geq l_k+\frac{\eta}{2}$ by consequence of (\ref{eq:def.eta}), we get that $X_0$ is in $B_{\eta/2}$.
			
			\vspace{1mm}
			\noindent \textbf{Lower bound:} Let $x \in B_{3\eta/4}$. We use the lower bound proof in \cite[Step 4]{lim2012consistency} to obtain that  
			$\bg(x) \geq -3r.$ Taking $c_{\eta}=\max \{r,3r\}=3r$ gives us the expected result.	
			
			\vspace{1mm}			
			\noindent (ii) Similarly to \cite[Step 5]{lim2012consistency}, as $\bg$ is convex over $B$ and a.s. bounded on $B_{3\eta/4}$ by $c_{\eta}$ from (i), there exists a constant $M_{\eta}=\frac{8c_{\eta}}{\eta}$ which is independent of the data, such that $\bg$ is $M_{\eta}$-Lipschitz over $B_{\eta}$; for a proof of this, see \cite[Theorem A]{roberts1974another}. 
			
			\vspace{1mm}
			
			\noindent (iii) As $g_d$ is continuous over $B$, $g_d$ has a maximum over $B$. Furthermore, $g_d$ is convex over $B$. The result follows, using a similar argument to (ii).
		\end{proof}
		
		\begin{figure}[]
			\centering
			\includegraphics[scale=0.27]{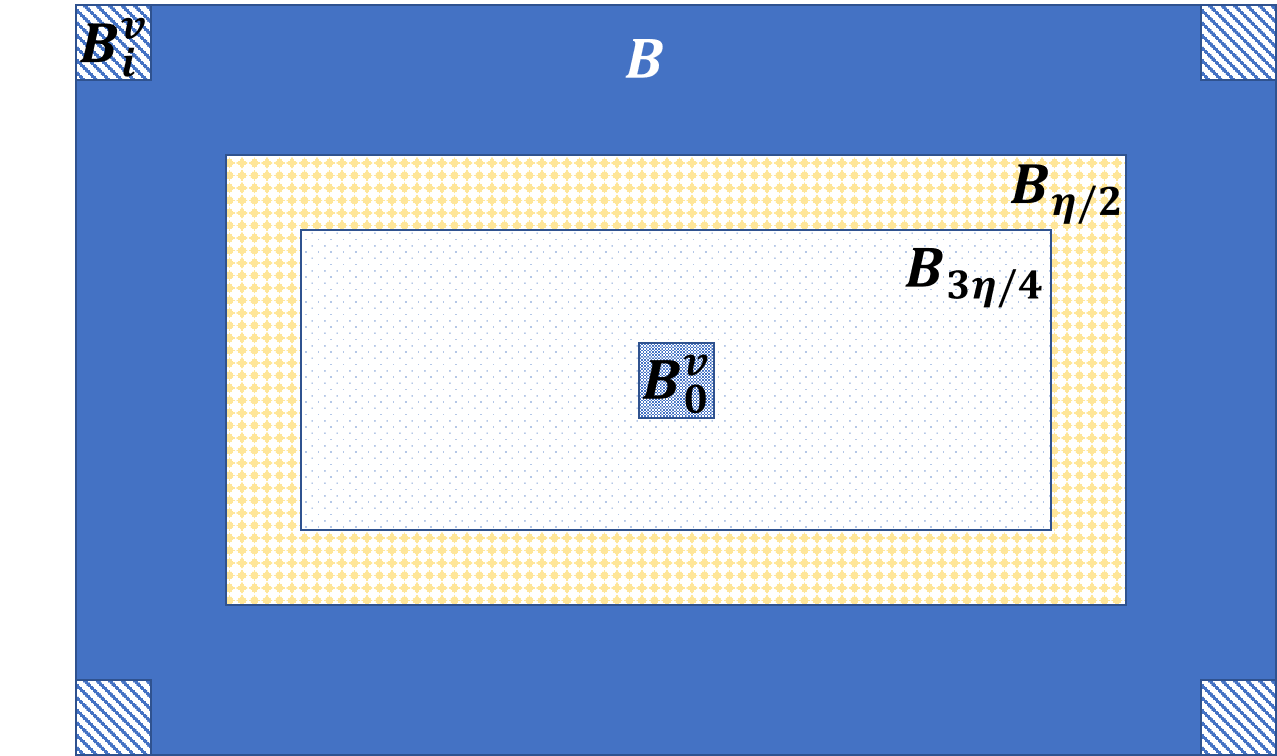}
			\caption{An illustration of the boxes that appear in the proof of Proposition \ref{prop:lym.glyn}.}\label{fig:proof.lemma.2}
		\end{figure}

		\begin{lemma} \label{lem:prop.hd}
			Let $h_d$ be defined as in Proposition \ref{prop:weierstrass} and $\bh$ defined as in (\ref{eq:opt.bh}). We have:
			\begin{enumerate}[(i)]
				\item $\exists c_{\eta}'>0$ which is independent of the data $(X_1,Y_1),\ldots, (X_m,Y_m)$ such that $|\bar{h}_{m,d}| \leq c_{\eta}'$ a.s. for all $x \in B_{3 \eta/4}$.
				\item $\exists M'_{\eta}>0$, which is independent of the data $(X_1,Y_1),\ldots(X_m,Y_m)$, such that $|\bh(x)-\bh(y)| \leq M'_{\eta} ||x-y||$ a.s. for all $x,y \in B_{\eta}$, i.e., $\bh$ is $M'_{\eta}$-Lipschitz over $B_{\eta}$.
				\item $\exists N'_{\eta}>0$, which is independent of the data $(X_1,Y_1),\ldots(X_m,Y_m)$, such that $|h_d(x)-h_d(y)| \leq N'_{\eta} ||x-y||$ for all $x,y \in B_{\eta}$, i.e., $h_d$ is $N'_{\eta}$-Lipschitz over $B_{\eta}$.
			\end{enumerate}
		\end{lemma}
		
		\begin{proof} (i) We follow the previous proof to conclude that there exists $X_{I(i)}\in B_i^v$ for $i=0,\ldots, 2^n$ such that $|\bar{h}_{m,d}(X_{I(i)})|\leq r$. Wlog, we take $\frac{\partial \bar{h}_{m,d}}{\partial x_i} \geq K_i^-$ for $i=1,\ldots,n$ (recall that $I^+ \cap I^- =\emptyset$). A similar proof holds for other cases. Let $x \in B_{3\eta/4}$ and let $i_M$ (resp. $i_m$) be such that $X_{I(i_M)} \geq x$ component-wise (resp. $X_{I(i_m)} \leq x$). These exist given the construction of $B_i^v$ and $B_{3\eta/4}$.
			For ease of notation, we denote $X_{I(i_M)}=X^M$ and $X_{I(i_m)}=X^m$ and have:
			\begin{align*}
			\bar{h}_{m,d}(x_1,x_2,\ldots,x_n)&=\bar{h}_{m,d}(X^M_1-(X^M_1-x_1),\ldots, X^M_n-(X^M_n-x_n))\\
			&\leq \bar{h}_{m,d}(X^M) + \sum_{i=1}^n |K_i^-| \cdot |X_i^M-x_i|
			\leq r +\max_{i=1,\ldots,n} |u_i-l_i| \cdot \sum_{i=1}^n |K_i^-|
			\end{align*}
			Likewise, 
			$\bar{h}_{m,d}(x_1,x_2,\ldots,x_n)\geq \bar{h}_{m,d}(X^m) - \sum_{i=1}^n |K_i^-| \cdot |x_i-X_i^m|\geq r-\max_{i=1,\ldots,n} |u_i-l_i| \cdot \sum_{i=1}^n |K_i^-|.$
			Thus, letting $c_{\eta}'=r+\max_{i=1,\ldots,n} |u_i-l_i| \cdot \sum_{i=1}^n |K_i^-|$, we obtain our result.
			
			(ii) Recall that $d$ is fixed. We use $(i)$ and Markov's inequality for polynomials (see, e.g., \cite{Kroo}) to obtain:
			$$\max_{x \in B_{\eta}} \left|\frac{\partial \bar{h}_{m,d}}{\partial x_i}\right| \leq \frac{4d^2}{\min_{i=1,\ldots,n} |u_i-l_i-2\eta|} \max_{x \in B_{\eta}} |\bar{h}_{m,d}| \leq \frac{4d^2}{\min_{i=1,\ldots,n} |u_i-l_i-2\eta|} \cdot c_{\eta'}.$$
			This implies the result by taking $M_{\eta}'= \frac{4d^2}{\min_{i=1,\ldots,n} |u_i-l_i-2\eta|} \cdot c_{\eta'}$.
			
			(iii) As $h_d$ has $K$-bounded derivatives over $B$, we get the result similarly to $(ii)$.
		\end{proof}
		
		We now give the proof of Proposition \ref{prop:lym.glyn} (i). The proof of (ii) goes through in a similar way using Lemma \ref{lem:prop.hd} instead of Lemma \ref{lem:prop.gd}. The proof of (i) has three steps: the first step shows that we can replace the set $C$ in the result by any $B_{\eta}$ for some $\eta$ such that (\ref{eq:def.eta}) holds and still get the conclusion over $C$. In the second step, we show that $g_d$ and $\bg$ are ``close'' on the random samples $X_i$; this is then used in the third step to show that the two functions are uniformly close. Step 1 is new compared to \cite{lim2012consistency}, Step 2 follows quite closely parts of the proof of \cite{lim2012consistency} but has to account for the box constraints among other minor details, Step 3 is immediate from \cite{lim2012consistency}.
		
		\begin{proof}[Proof of Proposition \ref{prop:lym.glyn} (i).] Fix $d \in \mathbb{N}$ and recall the definitions of $C_{n,d}$ and $g_d$. Let $\epsilon>0$.
		\paragraph{Step 1: going from $C$ to $B_{\eta}$.} 
		Let $C$ be any full-dimensional compact subset of $B$ such that no point of the boundary of $B$ is in $C$. As $C \cap int(B)=C$, there exists $\eta_C>0$ such that $C \subseteq B_{\eta_C}$. Furthermore, there exists $\eta_{\epsilon}>0$ such that
		\begin{align} \label{eq:rid.of.outliers}
		2\sqrt{2E[((Y_1-g_d(X_1))^2 \textbf{1}(X_1 \notin B_{\eta_{\epsilon}})]} \cdot \sqrt{5E[(Y_1-g_d(X_1))^2]} \leq \epsilon.
		\end{align}
		To see this, note that as $\eta \rightarrow 0$, $P(X_1 \notin B_{\eta}) \rightarrow 0$ with $P(X_1 \notin B)=0$ (this is a consequence of $P(X \in A)>0$ for any full-dimensional set $A$). Existence of $\eta_{\epsilon}$ then follows by expanding out the expression and using Assumptions \ref{assmpt:generation.X} and \ref{assmpt:generation.Y} together with the fact that $g_d$ is continuous over $B$ and so bounded over $B$. We let
		$\eta\mathrel{\mathop{:}}= \min\{\eta_C,\eta_{\epsilon}\}.$
		Thus defined, $\eta$ is such that (\ref{eq:def.eta}) holds as $C$ is full-dimensional and a subset of $B_{\eta}$. As a consequence, in the rest of the proof, we simply show that
		$\sup_{x \in B_{\eta}} |\bg(x)-g_d(x)| \rightarrow 0 \text{ a.s.}$
		when $m \rightarrow \infty$. 
		
		\paragraph{Step 2: showing that $\frac1m \sum_{i=1}^m (\bg(X_i)-g_d(X_i))^2 \rightarrow 0$ a.s. as $m\rightarrow \infty$.} Following Step~1 in \cite{lim2012consistency}, we have:
		\begin{equation} \label{eq:partition}
		\begin{aligned}
		\frac1m \sum_{i=1}^m (\bg(X_i)-g_d(X_i))^2 &\leq \frac2m \sum_{i=1}^m (Y_i-g_d(X_i))(\bg(X_i)-g_d(X_i)) \textbf{1}(X_i \in B_{\eta})\\
		&+\frac2m \sum_{i=1}^m (Y_i-g_d(X_i))(\bg(X_i)-g_d(X_i)) \textbf{1}(X_i \notin B_{\eta})
		\end{aligned}
		\end{equation}
		Step 3 of \cite{lim2012consistency} combined to (\ref{eq:rid.of.outliers}) enables us to show that for large enough $m$,
		\begin{align} \label{eq:terms.outside}
		& \frac2m \sum_{i=1}^m (Y_i-g_d(X_i))(\bg(X_i)-g_d(X_i)) \textbf{1}(X_i \notin B_{\eta}) \leq \epsilon \text{ a.s.}.
		\end{align}
		
		We now focus on the term that includes the sample points in $B_{\eta}$. As done in \cite{lim2012consistency}, we replace $\bg$ by a deterministic approximation and then apply the strong law of large numbers. Let 
		$\mathcal{C}=\{\text{polynomials }p:B_{\eta} \mapsto \mathbb{R} \text{ of degree }d,  M_{\eta}\text{-Lipschitz with }|p(x)| \leq c_{\eta}, \forall x \in B_{\eta}\},$
		where $M_{\eta}$ and $c_{\eta}$ are the constants given in Lemma \ref{lem:prop.gd}, which do not depend on the data $(X_1,Y_1),\ldots,(X_m,Y_m)$. Lemma \ref{lem:prop.gd} implies that $\bg$ belongs to $\mathcal{C}$ for large enough $m$. Furthermore, given that $\mathcal{C}$ is a subset of the set of continuous functions over the box $B_{\eta}$ and given that all functions in $\mathcal{C}$ are uniformly bounded and Lipschitz, it follows from Ascoli-Arzel\'a's theorem that $\mathcal{C}$ is compact in the metric $d(f,g)=\sup_{x \in B_{\eta}} |f(x)-g(x)|$. As a consequence, $\mathcal{C}$ has a finite $\epsilon$-net: we denote by $p_1,\ldots,p_R$ the polynomials belonging to it. Hence, for large enough $m$, there exists $r\in \{1,\ldots,R\}$ such that $\sup_{x \in B_{\eta}} |p_r(x)-\bg(x)|<\epsilon$, which is the deterministic approximation of $\bg$. Following Step 7 of \cite{lim2012consistency}, we show that:
		\begin{align*}
		&\frac2m \sum_{i=1}^m (Y_i -g_d(X_i))(\bg(X_i)-g_d(X_i)) \textbf{1}(X_i \in B_{\eta})\\
		&\leq \frac2m \cdot \epsilon \sum_{i=1}^m |Y_i-g_d(X_i)| + \max_{j=1,\ldots,R} \frac2m \sum_{i=1}^m (Y_i-g_d(X_i))(p_j(X_i)-g_d(X_i)) \textbf{1}(X_i \in B_{\eta}).
		\end{align*}
		As $g_d$ is bounded over $B$, we use the strong law of large numbers to obtain
		\begin{align} \label{eq:term.1}
		\frac1m \sum_{i=1}^m |Y_i-g_d(X_i)| \leq 2E[|Y_1-g_d(X_1)|] \text{ a.s., for large enough $m$. }
		\end{align} 
		We also have for any $j \in \{1,\ldots,R\}$,using Proposition \ref{prop:weierstrass} and Assumption \ref{assmpt:generation.Y},
		\begin{align*}
		&\frac2m \sum_{i=1}^m (Y_i-g_d(X_i))(p_j(X_i)-g_d(X_i)) \textbf{1}(X_i \in B_{\eta}) \\
		&= \frac2m \sum_{i=1}^m (Y_i-f(X_i)+f(X_i)-g_d(X_i))(p_j(X_i)-g_d(X_i)) \textbf{1}(X_i \in B_{\eta})\\
		&\leq \frac2m \sum_{i=1}^m \nu_i(p_j(X_i)-g_d(X_i)) \textbf{1}(X_i \in B_{\eta})+\frac2m \epsilon \sum_{i=1}^m |p_j(X_i)-g_d(X_i)| \textbf{1}(X_i \in B_{\eta}).
		\end{align*}
		Given Assumption \ref{assmpt:generation.X} and the fact that $h_j$ is uniformly bounded over $B_{\eta}$, from the strong law of large numbers, for any $j \in \{1,\ldots,r\}$ and for large enough $m$, we have
		\begin{align} \label{eq:term.2}
		\frac2m \sum_{i=1}^m \nu_i(p_j(X_i)-g_d(X_i)) \textbf{1}(X_i \in B_{\eta}) \leq \epsilon \text{ a.s.}
		\end{align}
		Similarly, using the strong law of large numbers again, for large enough $m$,
		\begin{align} \label{eq:term.3}
		\frac1m \sum_{i=1}^m |p_j(X_i)-g_d(X_i)| \textbf{1}(X_i \in B_{\eta}) \leq  2E[|p_j(X_1)-g_d(X_1)|\textbf{1}(X_1 \in B_{\eta})] \text{ a.s.}
		\end{align}
		Combining (\ref{eq:term.1}), (\ref{eq:term.2}), and (\ref{eq:term.3}), we conclude that for large enough $m$,
		\begin{align*}
		&\frac2m \sum_{i=1}^m (Y_i -g_d(X_i))(\bg(X_i)-g_d(X_i)) \textbf{1}(X_i \in B_{\eta}) \\ &\leq 2\epsilon \left(2E[|Y_1-g_d(X_1)|]+\max_{j=1,\ldots,R}2E[|p_j(X_1)-g_d(X_1)|\textbf{1}(X_1 \in B_{\eta})]\right) +\epsilon.
		\end{align*}
		Combining this with (\ref{eq:terms.outside}) in (\ref{eq:partition}), we obtain our conclusion.
		
		\paragraph{Step 3: showing that $\sup_{x \in B_{\eta}} |g_d(x)-\bg(x)| \rightarrow 0$ a.s. when $m \rightarrow \infty$.} This follows immediately from Step (8) in \cite{lim2012consistency}.
		\end{proof}
		
		\subsection{Proof of Proposition \ref{prop:approx.consistency}} \label{appendix:approx.consistency}

		Our first step is to show that, given a sequence of optimization problems, convergence of the objective values implies convergence of the minimizers under certain assumptions.
		
		\begin{theorem}\label{th:equ}
			Let $f: \mathbb{R}^n \mapsto \mathbb{R}$ be a strictly convex quadratic function and let $x^*$ be its (unique) minimizer (which must exist as $f$ is coercive) over a closed convex set $S \subseteq \mathbb{R}^n$. Suppose that a sequence $\{x_j\}_j \subseteq S$ is such that $\lim_{ j\rightarrow \infty} f(x_j) =f(x^*)$. Then, $\lim_{j \rightarrow \infty} ||x_j-x^*||=0.$
		\end{theorem}
		The proof of this theorem requires the following lemma.

			\begin{lemma}\label{lem:proof}
				Let $x^* \in \mathbb{R}^n$ and $\{S_j\}_j \subset \mathbb{R}^n$ be a sequence of sets satisfying (i) $S_j, j=1,2,\ldots$ is nonempty and compact, (ii) $S_{j+1} \subseteq S_j$, for $j=1,2,\ldots$, (iii) $\cap_j S_j=\{x^*\}$,
				then, $\forall \epsilon>0, ~\exists k_0$ such that $S_k \subseteq B(x^*,\epsilon),~\forall k \geq k_0$, where $B(x^*,\epsilon)$ is the ball centered at $x^*$ and with radius $\epsilon>0$.
			\end{lemma}
			
			\begin{proof} Let $d_k=diam(S_k)\mathrel{\mathop{:}}=\max_{y,z \in S_k} ||y-z||$, which is finite for all $k$ by virtue of (i). We claim that $\lim_{k \rightarrow \infty} d_k=0.$ Indeed, due to (ii), $\{d_k\}_k$ is nonincreasing and bounded below by zero. Thus, it converges to some limit $d^*$, with $d_k\geq d^*, \forall k$. Suppose for the sake of contradiction that $d^*>0$. As $S_k$ is compact, for all $k$, there exist $y_k$ and $z_k$ in $S_k$ such that $||y_k-z_k||=d_k$. As $\{y_k\}_k$ is a bounded sequence, it has a convergent subsequence $\{y_{\theta(k)}\}$. Likewise, $\{z_{\theta(k)}\}$ is a bounded sequence, so it has a convergent subsequence $\{z_{\phi(k)}\}$. Let
			$\bar{y}\mathrel{\mathop{:}}= \lim_{k \rightarrow \infty} \{y_{\phi(k)}\} \text{ and } \bar{z} \mathrel{\mathop{:}}=\lim_{k \rightarrow \infty} \{z_{\phi(k)}\}.$
			As $||y_{\phi(k)}-z_{\phi(k)}||=d_{\phi(k)}\geq d^*, \forall k,$ and $||\cdot||$ is continuous, we have that $||\bar{y}-\bar{z}|| \geq d^*$. Now, for any $i \in \mathbb{Z}_+$, (ii) implies that $y_{\phi(k)}, z_{\phi(k)} \in S_i$ for $\phi(k) \geq i$. Since $S_i$ is closed by virtue of (i), we must have $\bar{y}, \bar{z} \in S_i$. As the $i$ chosen before was arbitrary and using (iii), it follows that $\bar{y} \in \cap_k S_k=\{x^*\} \text{ and } \bar{z} \in \cap_k S_k=\{x^*\},$
			which contradicts $||\bar{y}-\bar{z}||\geq d^*>0$. This proves the claim.
			
			From the claim, we know that $\forall \epsilon>0,~\exists k_0$ such that $d_k<\epsilon,~\forall k\geq k_0$. As $d_k=\max_{y,z \in S_k} ||y-z|| \geq \max_{z \in S_k} ||x^*-z||,$
			$\max_{z \in S_k} ||x^*-z||<\epsilon$, $\forall k \geq k_0$. This implies that $S_k \subseteq B(x^*,\epsilon),\forall k \geq k_0.$
			\end{proof}
			
			\begin{proof}[Proof of Theorem \ref{th:equ}.]
			Let $S_k\mathrel{\mathop{:}}=\left\{x \in S~|~f(x) \leq f(x^*)+\frac1k\right\}.$
			The assumptions of Lemma \ref{lem:proof} are met: (i) holds as $S_k$ is nonempty (it contains $x^*$) and compact (it is contained in the sublevel set of a coercive function); (ii) holds as $S_{k+1}\subseteq S_k$ by definition; and (iii) holds as $x^*$ is contained in $S_k$ for all $k$ and it is the unique minimizer of $f$ over $S$. We can thus apply Lemma \ref{lem:proof}. Let $\epsilon>0$. From Lemma \ref{lem:proof}, 
			$\exists k_0 \text{ s.t. } \forall k\geq k_0,  x\in S \text{ and } f(x) \leq f(x^*)+\frac1k \Rightarrow ||x-x^*||\leq \epsilon.$
			Now, as $\lim_{j \rightarrow \infty} f(x_j)=f(x^*)$, then $\exists j_0$ such that $\forall j \geq j_0, |f(x_j)-f(x^*)|\leq \frac{1}{k_0}$. As $x_j \in S,\forall j$, it follows that $||x_j-x^*||\leq \epsilon$, which concludes the proof.
			\end{proof}
			
			\begin{proof}[Proof of Proposition \ref{prop:approx.consistency}.]
			Let $m,d,r \in \mathbb{N}$. We identify polynomials in $P_{n,d}$ with their coefficients in $\mathbb{R}^{\binom{n+d}{d}}$, using the same notation for both. We apply Theorem \ref{th:equ} with $f(c)=\sum_{i=1}^m (Y_i-c(X_i))^2,$ which is a strictly convex quadratic function by virtue of our assumptions on $m$ and on the data. To show (i), we let $S=\{c \in \mathbb{R}^{\binom{n+d}{d}}~|~H_c(x) \succeq 0, \forall x\in B\}$ with $\bg$ being $x^*$ and $\{\tilde{g}_{m,d',r}\}_{d'}$ being $\{x_j\}_j$. Similarly, for (ii), we let $S=\{c \in \mathbb{R}^{\binom{n+d}{d}}~|~ \frac{\partial c(x)}{\partial x_i} \geq K_i^-, \forall x\in B, i \in I^-, ~\frac{\partial c(x)}{\partial x_i} \leq K_i^+,\forall x \in B,i \in I^+\}$ with $\bh$ being $x^*$ and $\{\tilde{h}_{m,d',r}\}_{d'}$ being $\{x_j\}_j$. 
			
			It remains to show that $\{f(\tilde{g}_{m,d',r})\}_{d'}$ converges to $\{f(\bg)\}$ to show that (i) holds. Let $\epsilon>0$ and let $\epsilon'^2=\frac{\epsilon^2}{mn^2(n+9n(n-1))^2}$. Define $D \in \mathbb{R}^{n \times n}$ to be a diagonal matrix with $i^{th}$ entry $D_{ii}=\frac{1}{u_i-l_i}>0$ and $d \in \mathbb{R}$ to be a vector with $i^{th}$ entry $d_i=-1-\frac{l_i} {u_i-l_i}$. We then let $z=Dx+d$, with $z \in [-1,1]^n$ when $x \in B$. Consider now $\bar{g}_{m,d}^{\infty}(z)\mathrel{\mathop{:}}=\bar{g}_{m,d}(D^{-1}z-D^{-1}d)=\bar{g}_{m,d}(x)$. As $\bg$ is convex over $B$, $\bar{g}_{m,d}^{\infty}$ is convex over $[-1,1]^n$. By Theorem \ref{thm:conv}, there exists $d_0'$ such that, for all $d' \geq d'_0$, $f^{\infty}_{m,d'}(z)\mathrel{\mathop{:}}=\bar{g}_{m,d}^{\infty}(z)+\epsilon' \Theta_{d'}(z) \text{ is sos-convex}.$ We let $f_{m,d'}(x) \mathrel{\mathop{:}}=f^{\infty}_{m,d'}(Dx+d)$, which is also sos-convex. Then,
			$$\sum_{i=1}^m (f_{m,d'}(X_i)-\bar{g}_{m,d}(X_i))^2=\sum_{i=1}^m \epsilon'^2 (\Theta_{d'}(DX_i+d))^2 \leq \sum_{i=1}^m \epsilon'^2 ||\Theta_{d'}||_2^2 \cdot  ||DX_i+d||_2^2 \leq \epsilon^2,$$
			where the second inequality follows from the Cauchy-Schwarz inequality with $||\Theta_{d'}||_2^2$ being the 2-norm of the coefficients of $\Theta_{d'}$, and the third inequality follows by definition of $\epsilon'$ and the fact that $DX_i+d \in [-1,1]^n$. Now, by definition of $\tilde{g}_{m,d',r}$, as $f_{m,d'}$ is sos-convex, we have 
			$\sum_{i=1}^m (Y_i-\tilde{g}_{m,d',r}(X_i))^2 \leq \sum_{i=1}^m (Y_i-f_{m,d'}(X_i))^2$, which implies $\sum_{i=1}^m (\bg(X_i)-\tilde{g}_{m,d',r}(X_i))^2 \leq \sum_{i=1}^m (\bar{g}_{m,d}(X_i)-f_{m,d'}(X_i))^2 \leq \epsilon^2$ from the triangle inequality and the previous inequality. Thus,
			$$\sqrt{\sum_{i=1}^m (Y_i-\tilde{g}_{m,d',r}(X_i))^2} -\epsilon \leq \sqrt{\sum_{i=1}^m (Y_i-\bg(X_i))^2} \leq \sqrt{\sum_{i=1}^m (Y_i-\tilde{g}_{m,d',r}(X_i))^2}, $$
			where the first inequality is a consequence of the reverse triangle inequality and the second is by definition of $\bg$. It follows that $\lim_{d' \rightarrow \infty} f(\tilde{g}_{m,d',r})=f(\bg)$. We proceed similarly for (ii).
			\end{proof}


		\section{Proofs and Additional Figures for Section \ref{sec:sc.pr}}
		
		\subsection{Proof of Theorem \ref{th:charac}} \label{appdx:charc}
		\begin{lemma}\label{lem:hard.K.bded}
			Given a polynomial $p$ of degree $d \geq 3$, a vector $K$ as in Definition \ref{def:bdr}, and a box $B$ as defined in (\ref{eq:box}), it is strongly NP-hard to test whether $p$ has $K$-bounded derivatives over $B$.
		\end{lemma}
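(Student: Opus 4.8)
The plan is to reduce from \textsc{Max-Cut} (which is strongly NP-complete), by encoding a box-constrained quadratic maximization as a single upper bound on one partial derivative of a degree-$d$ polynomial. Two ingredients drive the argument: (a) a shift trick that turns the discrete maximum over $\{-1,1\}^n$ into a continuous maximum over $[-1,1]^n$; and (b) the elementary fact that for a polynomial of the form $p(x_0,x_1,\dots,x_n,x_{n+1})=x_0\cdot q(x_1,\dots,x_n)+x_{n+1}^{\,d}$ one has $\partial p/\partial x_0=q$ exactly, with $q$ depending on none of the other variables, so that an upper bound on $\partial p/\partial x_0$ over $B$ is precisely an upper bound on $q$ over the projected box. (One could equivalently reduce from checking matrix copositivity; \textsc{Max-Cut} is chosen for a clean accounting of integer sizes.)

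\textbf{Step 1 (\textsc{Max-Cut} $\to$ box-constrained quadratic maximization).} Given a graph $G=(V,E)$ with $|V|=n$ and adjacency matrix $A_G$, set $Q:=nI-A_G$; by Gershgorin's theorem the eigenvalues of $Q$ lie in $[1,2n-1]$, so $Q\succ 0$. A short computation shows that for $x\in\{-1,1\}^n$ the cut induced by $x$ has size $\tfrac14\bigl(x^\top Qx+2|E|-n^2\bigr)$, hence $\max_{x\in\{-1,1\}^n}x^\top Qx=n^2-2|E|+4\,\mathrm{maxcut}(G)$. Since $Q\succ 0$, the map $x\mapsto x^\top Qx$ is convex and therefore attains its maximum over the polytope $[-1,1]^n$ at a vertex, i.e.\ over $\{-1,1\}^n$; thus $\max_{x\in[-1,1]^n}x^\top Qx=n^2-2|E|+4\,\mathrm{maxcut}(G)$ as well. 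Writing $\gamma:=n^2-2|E|+4k-1$, it follows that $\mathrm{maxcut}(G)\le k-1$ if and only if $x^\top Qx\le\gamma$ for all $x\in[-1,1]^n$. Crucially, the entries of $Q$, the box $[-1,1]^n$, and the integer $\gamma$ all have magnitude polynomial in the size of $G$, so this is a valid reduction for \emph{strong} hardness.

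\textbf{Step 2 (encoding as a $K$-bounded-derivative instance) and conclusion.} Introduce variables $x_0,x_1,\dots,x_n,x_{n+1}$ and put $p:=x_0\cdot(x^\top Qx)+x_{n+1}^{\,d}$, where $x^\top Qx$ is read as a form in $x_1,\dots,x_n$; then $\deg p=d$ for every $d\ge 3$. Let $B=[-1,1]^{n+2}$. Here $\partial p/\partial x_0=x^\top Qx$ is a function of $x_1,\dots,x_n$ only, while $\partial p/\partial x_i=x_0\,\partial(x^\top Qx)/\partial x_i$ for $1\le i\le n$ and $\partial p/\partial x_{n+1}=d\,x_{n+1}^{\,d-1}$ are bounded in absolute value over $B$ by explicit, polynomially-bounded constants (at most $4n$ and $d$). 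Define $K$ by $K_0^-:=-(n^2+2|E|)$, $K_0^+:=\gamma$, and $K_i^\pm:=\pm(4n+d)$ for $i=1,\dots,n+1$ (or simply $K_0^-:=-\infty$, $K_i^\pm:=\pm\infty$ for $i\ge 1$). Then every derivative bound except the upper bound on $\partial p/\partial x_0$ holds automatically, so $p$ has $K$-bounded derivatives over $B$ if and only if $x^\top Qx\le\gamma$ on $[-1,1]^n$, which by Step 1 holds if and only if $\mathrm{maxcut}(G)\le k-1$. The map $(G,k)\mapsto(p,B,K)$ is polynomial-time computable with polynomially-bounded numerical data, so deciding whether $p$ has $K$-bounded derivatives over $B$ is strongly hard in the same sense that \textsc{Max-Cut} is; in particular it is NP-hard under polynomial-time Turing reductions, which is the intended reading (a point at which some derivative violates its bound is a short certificate, so the problem lies in co-NP and could not be NP-complete unless NP $=$ co-NP).

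\textbf{Main obstacle.} The one genuinely non-routine point is the shift in Step 1: naively relaxing $\max_{x\in\{-1,1\}^n}x^\top A_Gx$ to $\max_{x\in[-1,1]^n}x^\top A_Gx$ fails because $A_G$ is indefinite and the maximum of an indefinite quadratic over a box need not sit at a vertex. Replacing $A_G$ by $nI-A_G$ adds only the constant $n^2$ on $\{-1,1\}^n$ (so it does not change the argmax there) but makes the form strictly convex, which forces its box-maximum back onto a vertex --- this is exactly what lets the continuous problem faithfully encode \textsc{Max-Cut}. The remaining pieces --- the algebra relating $x^\top Qx$ to cut sizes, the Gershgorin bound giving $Q\succ0$, the choice of threshold $\gamma$, and the crude bounding of the nuisance derivatives --- are straightforward.
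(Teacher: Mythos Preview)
Your proposal is correct and follows essentially the same route as the paper: reduce from \textsc{Max-Cut}, shift the adjacency matrix by a multiple of the identity so the resulting quadratic is convex (you use $Q=nI-A_G$, the paper uses the concave mirror $A-\gamma I$), invoke the fact that a convex/concave quadratic over $[-1,1]^n$ attains its extremum at a vertex, and then encode this quadratic as a single partial derivative of a degree-$d$ polynomial while padding the remaining derivative bounds so they are automatically satisfied. The only cosmetic difference is that you introduce a fresh dummy variable $x_0$ and set $p=x_0\cdot(x^\top Qx)+x_{n+1}^{\,d}$, which gives $\partial p/\partial x_0=x^\top Qx$ on the nose; the paper instead integrates the quadratic directly with respect to the first existing variable (so the critical bound becomes $\partial p/\partial x_1\ge 0$) and treats $d=3$ separately before appending $x_{n+1}^d$ for $d\ge 4$---your packaging is a touch cleaner but the content is the same.
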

		
		\begin{proof}
		We first show the result for $d=3$ via a reduction from MAX-CUT. Recall that in an unweighted undirected graph $G=(V,E)$ with no self-loops, a \emph{cut} partitions the $n$ nodes of the graph into two sets, $S$ and $\bar{S}$. The size of the cut is the number of edges connecting a node in $S$ to a node in $\bar{S}$ and MAX-CUT is the problem: given a graph $G$ and an integer $k$, test whether $G$ has a cut of size at least $k$. It is well known that MAX-CUT is NP-hard~\cite{GareyJohnson_Book}. 
		
		Let $A$ be the adjacency matrix of the graph, i.e., $A \in \{0,1\}^{n \times n}$ with $A_{ij}=1$ if $\{i,j\} \in E$ and $A_{ij}=0$ otherwise, and let $\gamma\mathrel{\mathop{:}}=\max_{i} \{A_{ii}+\sum_{j\neq i} |A_{ij}|\}$. Note that $\gamma$ is the maximum degree in the graph and so an integer, and an upper bound on the largest eigenvalue of $A$ \cite{Gershgorin}.
		
		We show that $G$ does not have a cut of size $ \geq k$ if and only if the cubic polynomial 
		\begin{small}
			\begin{align} \label{eq:def.p}
			p(x_1,\ldots,x_n)&= \sum_{j=2}^n \frac{x_1^2A_{1j}x_j}{4}+\frac{x_1}{2}\sum_{1<i<j\leq n} x_i A_{ij}x_j-\frac{\gamma x_1^3}{12}-\frac{\gamma x_1}{4} \sum_{i=2}^n x_i^2 +x_1 \left( k+\frac{n\gamma}{4}-\frac{e^TAe}{4}\right)
			\end{align}
		\end{small}
		has $K$-bounded derivatives on $B=[-1,1]^n$, where 
		\begin{align} \label{eq:K.hard}
		K_1^-=0,~ K_1^+=n^2+k,~ K_2^-=\ldots=K_n^-=-n,~ K_2^+=\ldots=K_n^+=n+1.
		\end{align}
		Letting $x=(x_1,\ldots,x_n)^T$, we compute the partial derivatives of $p$:
		\begin{align*}
		\frac{\partial{p}(x)}{\partial x_1}&=\frac{1}{4} x^T(A-\gamma I)x+(k+\frac{n\gamma}{4}-\frac14 e^TAe),\\
		\frac{\partial{p}(x)}{\partial x_i}&=\frac{1}{4} x_1^2A_{1i}+\frac12 x_1 \cdot \sum_{j \neq i, j>1} x_i A_{ij}-\frac{\gamma}{2} x_1 \cdot x_i, ~i=2,\ldots,n.
		\end{align*}
		As $x \in [-1,1]^n$ and $\gamma \leq n$, it is straightforward to check that 
		$\frac{\partial p(x)}{\partial x_1} \leq K_1^+ \text{ and } K_i^- \leq \frac{\partial p(x)}{\partial x_i} \leq K_i^+, $ for $i=2,\ldots,n,~\forall x\in B.$
		The statement to show thus becomes: $G$ does not have a cut of size $\geq k$ if and only if 
		$\frac{\partial p(x)}{\partial x_1}=\frac14 x^T(A-\gamma I)x+k+\frac{n\gamma}{4}-\frac14 e^TAe \geq K_1^-=0,~\forall x\in B.$
		The converse implication is easy to prove: if $\frac{\partial p(x)}{\partial x_1}\geq 0$ for all $x\in B$, then, in particular, $\frac{\partial p(x)}{\partial x_1}\geq 0$ for $x \in \{-1,1\}^n.$ When $x \in \{-1,1\}^n$, $\gamma x^Tx=\gamma n$, and so 
		$k \geq \frac{1}{4}e^TAe-\frac{1}{4}x^TAx,~\forall x\in \{-1,1\}^n.$
		Any cut in $G$ can be encoded by a vector $x \in \{-1,1\}^n$ with $x_i=1$ if node $i$ is on one side of the cut and with $x_i=-1$ if node $i$ is on the other. The size of the cut is then given by $\frac{1}{4}e^TAe-\frac{1}{4}x^TAx$~\cite{maxcut_gw1}. Hence, this is equivalent to stating that all cuts in $G$ are of size less than or equal to $k$.
		
		For the implication, if $G$ does not have a cut of size greater than or equal to $k$, then, as established above, $k \geq \frac{1}{4}e^TAe-\frac{1}{4}x^TAx,~\forall x\in \{-1,1\}^n$. This is equivalent to
		\begin{align} \label{eq:ineq}
		\frac14 x^T(A-\gamma I)x \geq -k-\frac{n\gamma}{4}+\frac14 e^TAe,~\forall x\in \{-1,1\}^n.
		\end{align} 
		Now, by definition of $\gamma$, $A-\gamma I \preceq 0$, i.e., $x^T (A-\gamma I) x$ is concave. Let $y\in B$. We have $y=\sum_{i=1}^{2^n} \lambda_i x_i$ where $x_i$ are the corners of $B$, which are in $\{-1,1\}^n$, $\lambda_i \geq 0$, $i=1,\ldots,2^n$, and $\sum_{i=1}^{2^n} \lambda_i=1$. As $y \mapsto y^T(A-\gamma I)y$ is concave and (\ref{eq:ineq}) holds, 
		$$\frac{1}{4} y^T(A-\gamma I) y \geq \sum_{i=1}^{2^n} \lambda_i x_i^T(A-\gamma I)x_i \geq \sum_{i=1}^{2^n} \lambda_i (-k-\frac{n \gamma }{4}+\frac14 e^TAe)=-k-\frac{n \gamma }{4}+\frac14 e^TAe.$$
		This concludes the proof for $d=3$.
		For $d\geq 4$, we define
		$$\tilde{p}(x_1,\ldots,x_n,x_{n+1}) \mathrel{\mathop{:}}=p(x_1,\ldots,x_n)+x_{n+1}^d \in P_n^d, ~\tilde{K}\mathrel{\mathop{:}}=(K,0,1), \text{ and } \tilde{B} \mathrel{\mathop{:}}=B \times [0,1],$$
		with $p$ as in (\ref{eq:def.p}), $K$ as in (\ref{eq:K.hard}), and $B =[-1,1]^n$. We compute:
		$$\frac{\partial{\tilde{p}}(x_1,\ldots,x_{n+1})}{\partial x_i}=\frac{\partial p(x_1,\ldots,x_n)}{\partial x_i} \text{ for } i=1,\ldots,n, \text{ and } \frac{\partial{\tilde{p}}(x_1,\ldots,x_{n+1})}{\partial x_{n+1}}= d\cdot x_{n+1}^{d-1}.$$
		As $d\cdot x_{n+1}^{d-1} \in [0,1]$ when $x_{n+1} \in [0,1]$, it follows that $\tilde{p}$ has $\tilde{K}$-bounded derivatives over $\tilde{B}$ if and only if $p$ has $K$-bounded derivatives over $B$.
		\end{proof}
		
		\begin{lemma}[\cite{ahmadi2019complexity,hall2018optimization1}] \label{lem:conv.hard}
			Given a polynomial $p$ of degree $d \geq 3$ and a box $B$ as defined in (\ref{eq:box}), it is strongly NP-hard to test whether $p$ is convex over $B$.
		\end{lemma}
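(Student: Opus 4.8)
The plan is to mirror the structure of the proof of Lemma~\ref{lem:hard.K.bded} and obtain strong NP-hardness by a reduction from MAX-CUT; since the statement is attributed to \cite{ahmadi2019complexity,hall2018optimization1}, the full argument can also simply be quoted from there, but here is how I would run it. First I would reduce the general case $d\ge 3$ to $d=3$ by the same padding trick used in Lemma~\ref{lem:hard.K.bded}: given a cubic $p$ in $n$ variables, set $\tilde p(x_1,\dots,x_{n+1})\mathrel{\mathop{:}}=p(x_1,\dots,x_n)+x_{n+1}^d$ and $\tilde B\mathrel{\mathop{:}}=B\times[0,1]$. Then $H_{\tilde p}$ is block diagonal, the extra block being $d(d-1)x_{n+1}^{d-2}\ge 0$ on $[0,1]$, so $\tilde p$ has degree exactly $d$ and is convex over $\tilde B$ if and only if $p$ is convex over $B$; this transfers degree-$d$ hardness to the degree-$3$ case.

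For $d=3$ the key structural fact is that $H_p(x)$ is \emph{affine} in $x$. Because the cone of positive semidefinite matrices is convex and a box is the convex hull of its $2^n$ corners, $p$ is convex over $B=[-1,1]^n$ if and only if $H_p(v)\succeq 0$ for every $v\in\{-1,1\}^n$. So I want a polynomial-time map from a graph $G$ (adjacency matrix $A$, integer $k$) to a cubic $p$ and a box so that $p$ is convex over that box iff $G$ has no cut of size $\ge k$. Reusing the Gershgorin normalization from Lemma~\ref{lem:hard.K.bded}, set $\gamma\mathrel{\mathop{:}}=\max_i\{A_{ii}+\sum_{j\ne i}|A_{ij}|\}$, so $\gamma I\succeq A$ by \cite{Gershgorin}, and recall that ``$G$ has no cut of size $\ge k$'' is equivalent to the single inequality $q(x)\mathrel{\mathop{:}}=\tfrac14 x^T(A-\gamma I)x+\big(k+\tfrac{n\gamma}{4}-\tfrac14 e^TAe\big)\ge 0$ on $[-1,1]^n$. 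The role of the shift $\gamma I\succeq A$ is that, for a suitable polynomially bounded scalar $M$, the polynomial $M^2-q(x)=\tfrac14 x^T(\gamma I-A)x+(M^2-c)$ is a positive semidefinite quadratic plus a nonnegative constant, hence a finite sum of squares $\sum_j\ell_j(x)^2=:\|\ell(x)\|^2$ of rational affine forms. Since $\big(\begin{smallmatrix}MI&\ell(x)\\ \ell(x)^T&M\end{smallmatrix}\big)\succeq 0$ iff $\|\ell(x)\|^2\le M^2$ iff $q(x)\ge 0$, the reduction is finished once one exhibits a cubic $p$ in the $x$-variables together with $O(n)$ auxiliary variables, over an explicit box, whose Hessian at every corner is positive semidefinite exactly when this matrix is.

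The hard part is precisely that last step. The integrability constraint on a Hessian (the mixed third partials must agree) means that any cubic monomial that makes an off-diagonal Hessian entry depend on $x$ simultaneously contaminates the blocks coupling the $x$-variables to the auxiliary ones, so one cannot realize $\big(\begin{smallmatrix}MI&\ell(x)\\ \ell(x)^T&M\end{smallmatrix}\big)$ as a clean Hessian block in isolation. The construction in \cite{ahmadi2019complexity,hall2018optimization1} gets around this by choosing the auxiliary variables, their box ranges, and the additional (quadratic, hence Hessian-constant) terms so that these spurious coupling blocks are dominated and the only corner condition that can ever fail is $q(v)\ge 0$; verifying that this bookkeeping goes through while keeping the instance polynomially sized with rational data is the technical core. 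Strong NP-hardness then follows since MAX-CUT is strongly NP-hard \cite{GareyJohnson_Book} and every number produced ($\gamma$, $M$, the coefficients of the $\ell_j$, the box endpoints) is bounded by a polynomial in $n$ and the bit-size of $(A,k)$.
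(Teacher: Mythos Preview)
The paper does not give its own proof of this lemma; it simply quotes the result from \cite{ahmadi2019complexity,hall2018optimization1} and uses it as a black box in the proof of Theorem~\ref{th:charac}. So there is no ``paper's proof'' to compare against.

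Your proposal goes further than the paper by outlining how such a proof might run. The reduction of degree $d\ge 3$ to $d=3$ via $\tilde p=p+x_{n+1}^d$ on $B\times[0,1]$ is correct (the Hessian is block diagonal and the extra $1\times 1$ block $d(d-1)x_{n+1}^{d-2}$ is nonnegative on $[0,1]$), and so is the structural observation that for a cubic the Hessian is affine in $x$, so convexity over a box is equivalent to positive semidefiniteness of $H_p$ at the $2^n$ corners. The Schur-complement encoding of $q(x)\ge 0$ is also standard and correct.

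However, the step you yourself label ``the hard part''---actually producing a cubic in an extended set of variables whose Hessian at every corner is positive semidefinite exactly when $q(v)\ge 0$, while absorbing the cross-terms forced by symmetry of third partials---is precisely the technical content of \cite{ahmadi2019complexity,hall2018optimization1}, and you defer it back to those references rather than carrying it out. So your proposal is not a self-contained proof; it is an outline whose decisive construction still rests on the same citations the paper invokes. For the purposes of this paper that is entirely adequate, since Lemma~\ref{lem:conv.hard} is only ever used as a cited black box.
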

		
		\begin{lemma}[\cite{nicolaides1972class}]\label{lem:poly.interp}
			Let $S_n$ be a closed full-dimensional simplex in $\mathbb{R}^n$ with vertices $\theta_0,\ldots,\theta_n$. Let $d \geq 1$ be an integer and let $Z_d$ denote the set of numbers $\{0,1/d,2/d,\ldots,1\}$. We associate with $S_n$ the discrete point set $\Gamma(n,d)$:
			\begin{align} \label{def:gam}
			\Gamma(n,d)\mathrel{\mathop{:}}=\{x \in \mathbb{R}^n ~|~ x=\sum_{i=0}^n \lambda_i \theta_i, \lambda_i \in Z_d, \sum_{i=0}^n \lambda_i=1\}.
			\end{align} 
			Let $w_i, i=1,\ldots,\binom{n+d}{d}$ be the points contained in $\Gamma(n,d)$ and let $f_i, i=1,\ldots,\binom{n+d}{d}$ be arbitrary numbers in $\mathbb{R}$, then there is exactly one polynomial $p \in P_{n,d}$ such that $p(w_i)=f_i, i=1,\ldots \binom{n+d}{d}$.
		\end{lemma}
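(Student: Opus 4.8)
The plan is to reduce the statement to a kernel-triviality claim about $P_{n,d}$ and then prove that claim by induction on $n+d$. First I would record two dimension counts: the number of points in $\Gamma(n,d)$ equals $\binom{n+d}{d}$ --- writing each $\lambda_i = k_i/d$ with $k_i$ a nonnegative integer and $\sum_{i=0}^n k_i = d$, this is a stars-and-bars count, and distinct tuples $(k_0,\dots,k_n)$ give distinct points because the vertices $t_0,\dots,t_n$ of a full-dimensional simplex are affinely independent (so barycentric coordinates are unique) --- and $\dim P_{n,d} = \binom{n+d}{d}$. Hence the evaluation map $E : P_{n,d} \to \mathbb{R}^{\binom{n+d}{d}}$, $p \mapsto (p(w_i))_i$, is a linear map between spaces of equal finite dimension, so it is bijective if and only if it is injective; consequently the existence of exactly one $p \in P_{n,d}$ with $p(w_i)=f_i$ for every choice of $(f_i)_i$ is equivalent to the claim that the only $p \in P_{n,d}$ vanishing on all of $\Gamma(n,d)$ is $p \equiv 0$. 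Since an invertible affine change of coordinates maps simplices to simplices, carries principal lattices to principal lattices, and preserves $P_{n,d}$, I may take $S_n$ to be any convenient simplex when proving this.

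I would establish the kernel-triviality claim by induction on $n+d$. The base case is $n=1$: a univariate polynomial of degree at most $d$ vanishing at the $d+1$ distinct points of $\Gamma(1,d)$ is zero. For the inductive step, let $p \in P_{n,d}$ vanish on $\Gamma(n,d)$ with $n \ge 2$, $d \ge 1$. For each $i \in \{0,\dots,n\}$, the facet $\{\lambda_i = 0\} \cap S_n$ is an $(n-1)$-simplex whose order-$d$ principal lattice is exactly $\Gamma(n,d) \cap \{\lambda_i = 0\}$; the restriction of $p$ to the hyperplane $\{\lambda_i = 0\}$ is, under the obvious identification, an element of $P_{n-1,d}$ vanishing on that lattice, so by the induction hypothesis it is the zero polynomial, and therefore $p$ vanishes on all of $\{\lambda_i = 0\}$, whence the affine form $\lambda_i$ divides $p$. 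As the $\lambda_i$ are pairwise non-proportional affine forms, $\prod_{i=0}^n \lambda_i$ divides $p$; if $d \le n$ this already forces $p \equiv 0$ for degree reasons, so assume $d \ge n+1$ and write $p = \bigl(\prod_{i=0}^n \lambda_i\bigr) q$ with $q \in P_{n,\,d-n-1}$.

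The crux --- and the step I expect to be the main obstacle to set up cleanly --- is to deduce that $q$ vanishes on a genuine principal lattice of the correct order. The lattice points of $\Gamma(n,d)$ at which $\prod_{i=0}^n \lambda_i$ does not vanish are exactly the ``interior'' ones, with every $\lambda_i \ge 1/d$; substituting $k_i = m_i + 1$ shows these are precisely the image of $\Gamma(n,\,d-n-1)$ under the homothety $\psi$ centered at the barycenter $b$ of $S_n$ with ratio $(d-n-1)/d$, since $\tfrac1d\sum_i (m_i+1) t_i = \tfrac{d-n-1}{d}\,y + \tfrac{n+1}{d}\,b$ when $y = \tfrac{1}{d-n-1}\sum_i m_i t_i$ and $\sum_i m_i = d-n-1$ (in the degenerate case $d-n-1 = 0$ the interior lattice is the single point $b$ and $q$ is a constant vanishing there, so $q \equiv 0$). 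Since $q$ vanishes at every interior lattice point and $\psi$ is an invertible affine map, $q \circ \psi \in P_{n,\,d-n-1}$ vanishes on $\Gamma(n,\,d-n-1)$; as $n + (d-n-1) = d-1 < n+d$, the induction hypothesis gives $q \circ \psi \equiv 0$, hence $q \equiv 0$ and $p \equiv 0$. Throughout I would invoke the elementary fact that if a nonzero affine form $\ell$ and a polynomial $p$ both vanish on the hyperplane $\{\ell = 0\}$, then $\ell \mid p$ (rotate $\ell$ to a coordinate axis). This closes the induction and proves the lemma.
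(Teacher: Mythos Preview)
Your proof is correct. The paper does not provide its own proof of this lemma; it is stated with a citation to Nicolaides (1972) and invoked as a black box in the NP-hardness reduction of Theorem~\ref{th:charac}. Your argument --- the dimension count reducing bijectivity of the evaluation map to kernel triviality, peeling off the $n+1$ facet hyperplanes via the induction hypothesis to obtain $\prod_i \lambda_i \mid p$, and then handling the residual quotient on the interior lattice points by recognising them as a homothetic image of $\Gamma(n,d-n-1)$ --- is the classical unisolvence proof for the principal lattice, so there is nothing in the paper to compare it against beyond the bare citation.
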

		
		\begin{proof}[Proof of Theorem \ref{th:charac}.]
		Let $d\geq 3$. We only show NP-hardness of BD-DER-REG-$d$ (the proof of NP-hardness of CONV-REG-$d$ is analogous). This is done via a reduction from the problem in Lemma \ref{lem:hard.K.bded}. Consider an instance of the problem, i.e., a polynomial $\tilde{p}$ of degree $d$, a vector $\tilde{K}$, and a box $\tilde{B}$ and construct a reduction by taking $K=\tilde{K}$, $B=\tilde{B}$, and $t=0$. Recalling that $\tilde{B}=[\tilde{l}_1,\tilde{u}_1] \times \ldots \times [\tilde{l}_n,\tilde{u}_n]$, set 
		$\theta_0=(\tilde{l}_1,\tilde{l}_2,\ldots,\tilde{l}_n), \theta_1=(\tilde{u}_1,\tilde{l}_2, \ldots,\tilde{l}_n), \ldots, \theta_n=(\tilde{l}_1,\tilde{l}_2,\ldots,\tilde{u}_n)$
		and take $X_1,\ldots, X_{m}$, where $m=\binom{n+d}{d}$, to be the points contained in $\Gamma_{n,d}$ as in (\ref{def:gam}). Note that $X_1,\ldots,X_m$ are rational, can be computed in polynomial time, and belong to $B$ as $\theta_0,\ldots,\theta_n \in B$. We then take $Y_i=\tilde{p}(X_i), i=1,\ldots,m$. It is easy to see that the answer to BD-DER-REG-$d$ is YES if and only if $\tilde{p}$ has $\tilde{K}$-bounded derivatives on $\tilde{B}$. The converse is immediate by taking $p=\tilde{p}$. For the implication, if the answer to BD-DER-REG-$d$ is YES, then there exists a polynomial $p$ of degree $d$ with $K$-bounded derivatives over $B$ such that $\sum_{i=1}^m (Y_i-p(X_i))^2=0$, i.e., $p(X_i)=Y_i$ for $i=1,\ldots,m$. From Lemma \ref{lem:poly.interp}, as $\tilde{p} \in P_{n,d}$, it must be the case that $p=\tilde{p}$ and $\tilde{p}$ has $K$-bounded derivatives over $B$.
		
		Now, let $d=1$ and denote by $\mathbf{K}^+=(K_1^+,\ldots,K_n^+)$ and by $\mathbf{K}^-=(K_1^-,\ldots,K_n^-)$. The answer to BD-DER-REG-$d$ is YES if and only if the optimal value of
		$\min_{a_0 \in \mathbb{R}, ~K^- \leq a \leq K^+~} \sum_{i=1}^m (Y_i -a^TX_i-a_0)^2$
		is less than or equal to $t$ (here the inequalities are component-wise). As this is a QP, BD-DER-REG-$d$ is in P when $d=1$. Similarly, the answer to CONV-REG-$d$ is YES if and only if the optimal value of $\min_{a_0 \in \mathbb{R}, a\in \mathbb{R}^n} \sum_{i=1}^m (Y_i -a^TX_i-a_0)^2$ is $ \leq t$ (as any affine function is convex). This can be done by solving a system of linear equations, which implies that CONV-REG-$d$ is in P for $d=1$.
		
		Let $d=2$ and let $S^n$ be the set of $n \times n$ symmetric matrices. We let $p(x)=x^T Qx+b^Tx+c$, where $Q\in S^n,$ $b\in \mathbb{R}^n,$ $c\in \mathbb{R}$. We answer YES to BD-DER-REG-$d$ if and only if the optimal value of
		\begin{equation}\label{eq:hardness}
		\begin{aligned}
		&\min_{Q\in S^n, b\in \mathbb{R}^n, c \in \mathbb{R}} \sum_{i=1}^m (Y_i -X_i^TQX_i -b^TX_i-c)^2\\
		&\text{s.t. } \mathbf{K}^- \leq 2Qx+b\leq \mathbf{K}^+, \forall x\in B
		\end{aligned}
		\end{equation}
		is less than or equal to $t$. As $B$ is a compact, convex, and full-dimensional polyhedron, one can use \cite[Proposition I.1.]{handelman1} to rewrite the condition $2Qx+b\leq \mathbf{K}^+, \forall x\in B$ equivalently as $\mathbf{K}^+ -2Qx-b =\lambda+\Lambda^-(x-l)+\Lambda^+(u-x),~ \lambda \geq 0, ~\Lambda^+,\Lambda^-\geq 0$
		where $\lambda \in \mathbb{R}^n$, $\Lambda^{\pm} \in \mathbb{R}^{n \times n}$ are additional variables. A similar technique can be used for $2Qx+b\geq \mathbf{K}^-, \forall x\in B$. Thus, (\ref{eq:hardness}) is equivalent to a QP and so testing whether its objective value is less than or equal to $t$ can be done in polynomial time. Now, the answer to CONV-REG-$d$ is YES if and only if the optimal value of 
		$\min_{Q\in S^n, Q\succeq 0, b\in \mathbb{R}^n, c \in \mathbb{R}} \sum_{i=1}^m (Y_i -X_i^TQX_i -b^TX_i-c)^2$
		is less than or equal to $t$. As this is a polynomial-size SDP, the result follows.
			\end{proof}

		\subsection{Additional Experimental Results for Section \ref{sec:sc.pr}} \label{appendix:add.comp}
		
		We provide here a detailed comparison of the optimal value of the sample problem against that of the sos-shape constrained problem, as explained in Section \ref{sec:sc.pr}. We consider the application in Section \ref{subsec:opt.transport} and plot in Table \ref{tab:opt.transport} the ratio of the optimal value of the sample problem against that of the sos-shape constrained problem for varying $d$ and $L ,\ell$, with $n=3$, $R=1$, and $N'=100$. 
		\begin{table}
			\centering
			\footnotesize
			\begin{tabular}{|c|c|c|c|c|c|c|}
				\hline
				& \multicolumn{3}{c|}{$d=3$} & \multicolumn{3}{|c|}{$d=5$} \\
				\hline & $L=2$ & $L=5$ & $L=10$ & $L=2$ & $L=5$ & $L=10$\\
				\hline
				$\ell=0.1 $ & 0.927 & 0. 930 & 0.939 & 0.849 & 0. 967 & 0.956 \\
				$\ell=0.5 $ & 0.980 & 0.979 & 0.977 & 0.956 & 0.965 & 0.963 \\
				$\ell =1 $ & 0.999 & 0.999 & 0.999 & 0.998 & 0.998 & 0.999\\
				\hline
			\end{tabular}
			\caption{Ratio of the objective of the sample problem against that of the sos-shape constrained problem for varying $d$, $L$ and $\ell$ for the application in Section \ref{subsec:opt.transport}.}
			\label{tab:opt.transport}
		\end{table}

			\section{Proofs and Additional Figures for Section \ref{sec:apps}}\label{appendix:sec.apps}

			\begin{proof}{Proof of Proposition \ref{prop:opt.val.proof}.} We prove each statement separately. For (i), for fixed $y \in \mathbb{R}^m$, $b \mapsto -\langle b,y \rangle$ is linear. Following \cite[Section 3.2.3]{BoydBook}, $v_D(b,c)$ is convex in $b$. As $v_D(b,c)=v(b,c)$ by strong duality, the result follows. For (ii), for fixed $x \in \mathbb{R}^n$, $c \mapsto \langle c,x \rangle$ is linear. Following \cite[Section 3.2.3]{BoydBook}, $v_P(b,c)$ is convex in $c$. As $v_P(b,c)=v(b,c)$ by strong duality, the result follows. For (iii), we refer here to (\ref{eq:conic.P}) by $P_{b,c}$ to reflect the dependency of (\ref{eq:conic.P}) on $b$ and $c$. Let $b\preceq_{\mathcal{K}} b'$. Any feasible solution to $P_{b,c}$ is feasible to $P_{b',c}$. Indeed, if $x$ is a feasible solution to $P_{b,c}$, i.e., $b-Ax \in \mathcal{K}$, then $b'-Ax=b'-b+b-Ax \in \mathcal{K}$. It follows that the feasible set of $P_{b,c}$ is a subset of that of $P_{b',c}$, and so $v(b',c) \leq v(b,c)$.
			\end{proof}
			
			\begin{proof}{Proof of Proposition \ref{prop:shape.inventory}.}
				Property (i) follows from Proposition \ref{prop:opt.val.proof} (i) and (\ref{eq:constraint.L}). Proprety (ii) follows from Proposition \ref{prop:opt.val.proof} (ii). For (iii), it is straightforward to see that $v$ does not decrease with $\beta^+$ and $\beta^-$ as $\sum_{t=2}^T z_{t}^{\pm} \geq 0.$ It is also straightforward to see that $v$ does not decrease with $L$ as, when $L$ increases, the left-hand side inequality in (\ref{eq:constraint.L}) becomes harder to satisfy, and so the feasible set shrinks. We give a proof by contradiction that $v$ does not increase with $\alpha^+$ (a similar reasoning applies for $\alpha^-$). Let $\alpha_1<\alpha_2$ and let $v_1$ (resp. $v_2$) be the optimal value of (\ref{eq:semi.inf.opt}) when $\alpha^+=\alpha_1$ (resp $\alpha_2$). By way of contradiction, we assume that $v_1>v_2$. We denote an optimal solution to (\ref{eq:semi.inf.opt}) when $\alpha^+=\alpha_2$ by $\{\tilde{w}_t\}$, $\{\tilde{z}_t^+\}$, $\{\tilde{z}_t^-\}$,$\tilde{C}$,$\{\tilde{y}_{\tau}^t\}$,$\{\tilde{q}_{\tau}^t\}$, $\{\tilde{v}_{\tau}^t\}$, $\{\tilde{u}_{\tau}^t\}$ and take $\forall t, \tau$,
				\begin{align}\label{eq:new.sol}
				w_t=\tilde{w}_t,   z_t^{\pm}=\tilde{z}_t^\pm,~ C=\tilde{C},~ y_{\tau}^t=\tilde{y}_{\tau}^t, ~q_{\tau}^t=\tilde{q}_{\tau}^t,~ v_{\tau}^t=\tilde{v}_\tau^t, ~u_{\tau}^t=\frac{\alpha_2}{\alpha_1}\tilde{u}_{\tau}^t.
				\end{align}
				One can check that as $\alpha_2/\alpha_1 \geq 1$, this solution is feasible to (\ref{eq:semi.inf.opt}) when $\alpha^+=\alpha_1$. It is also easy to see that the corresponding optimal value equals $v_2$. As $v_1>v_2$, this contradicts optimality of $v_1$.
			\end{proof}

			\begin{figure}[]
				\setlength\tabcolsep{1pt}
				\begin{center}
					\begin{tabular}{cccc}
						\subfloat[$\ell = 0.1$, $L = 2$]{\includegraphics[width = 0.2\linewidth]{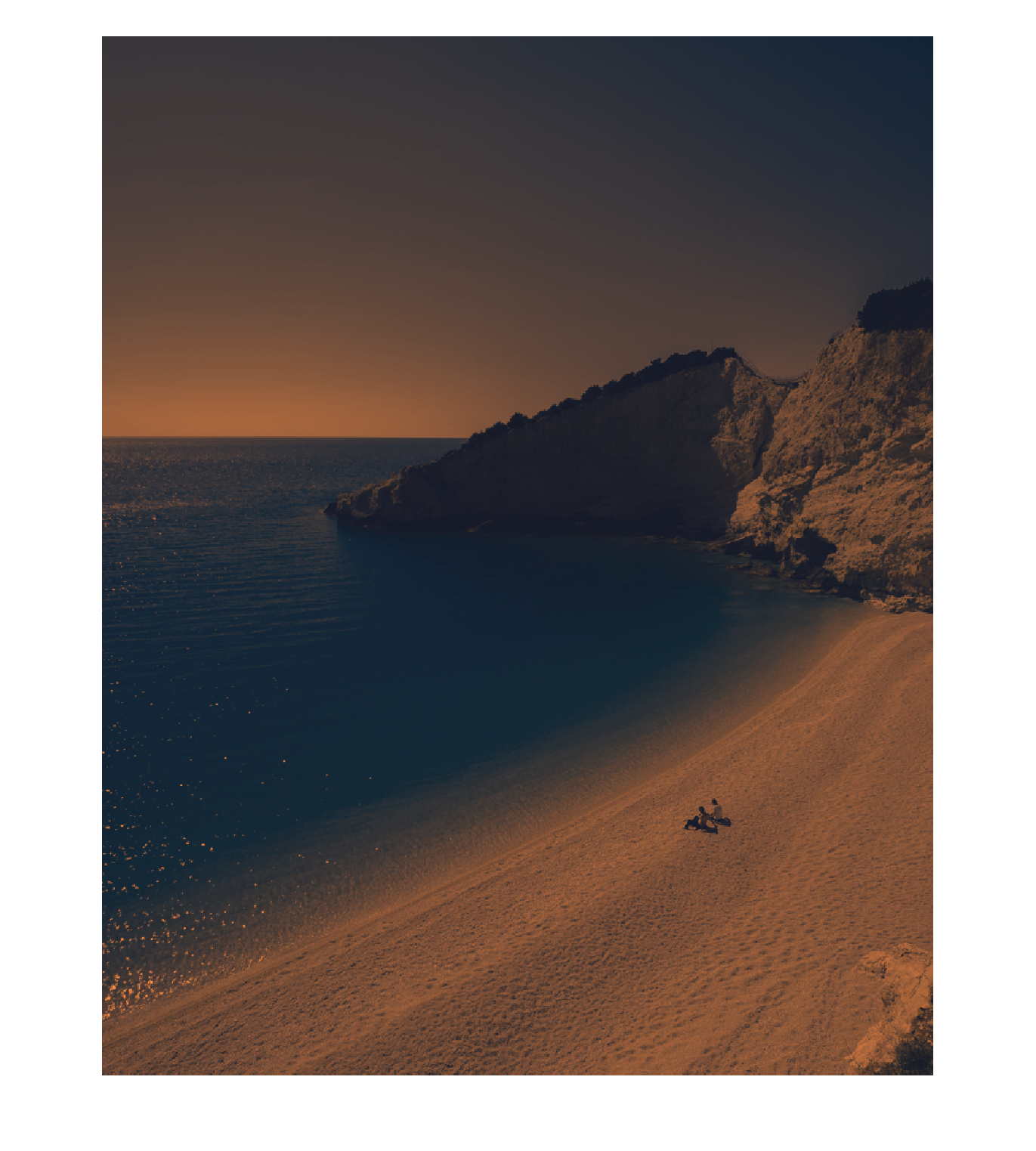}} &
						\subfloat[$\ell = 0.1$, $L = 10$]{\includegraphics[width = 0.2\linewidth]{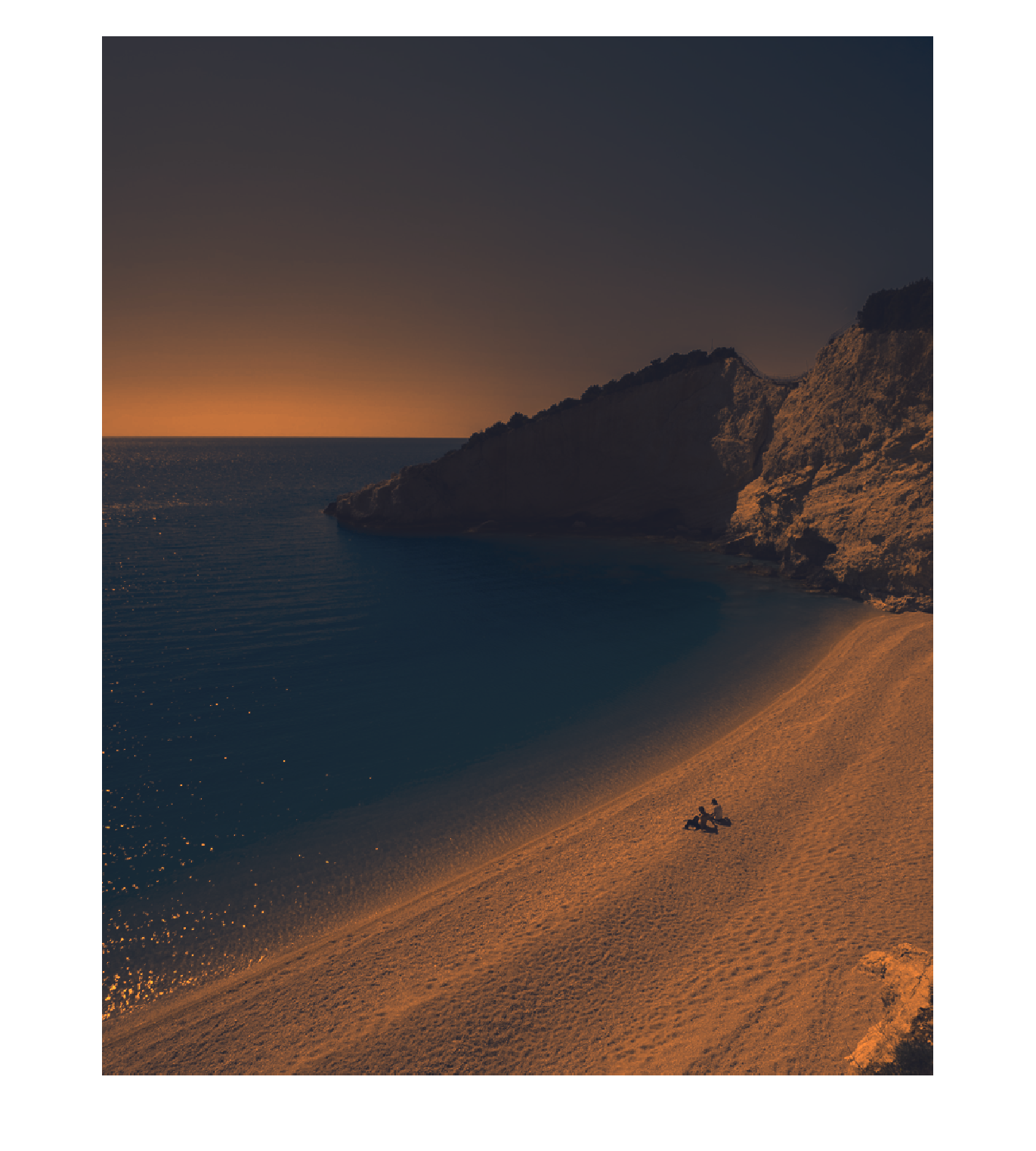}} &
						\subfloat[$\ell = 0.5$, $L = 2$]{\includegraphics[width = 0.2\linewidth]{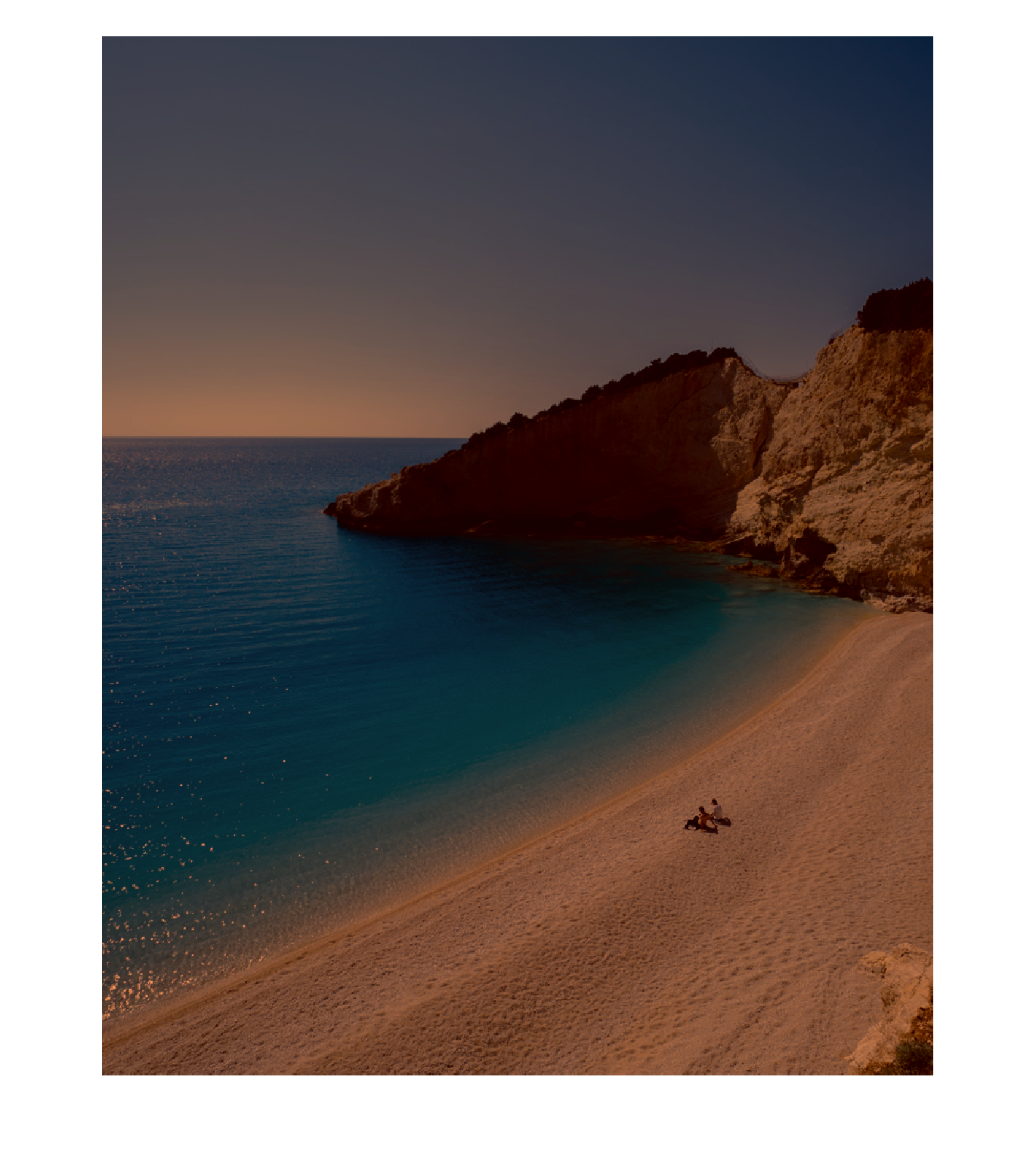}} &
						\subfloat[$\ell = 0.5$, $L = 10$]{\includegraphics[width = 0.2\linewidth]{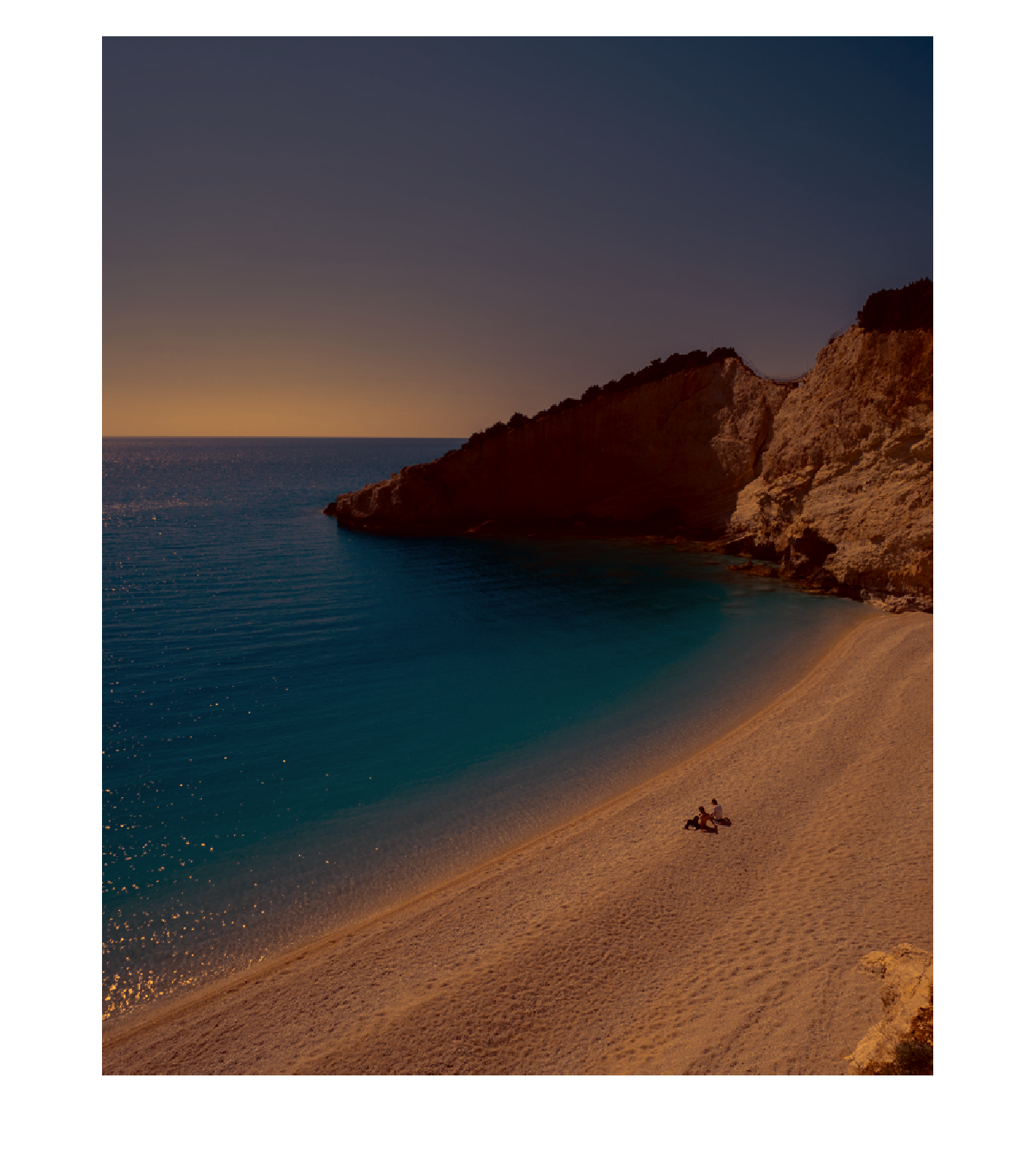}}\\
						\subfloat[$\ell = 1$, $L = 2$]{\includegraphics[width = 0.2\linewidth]{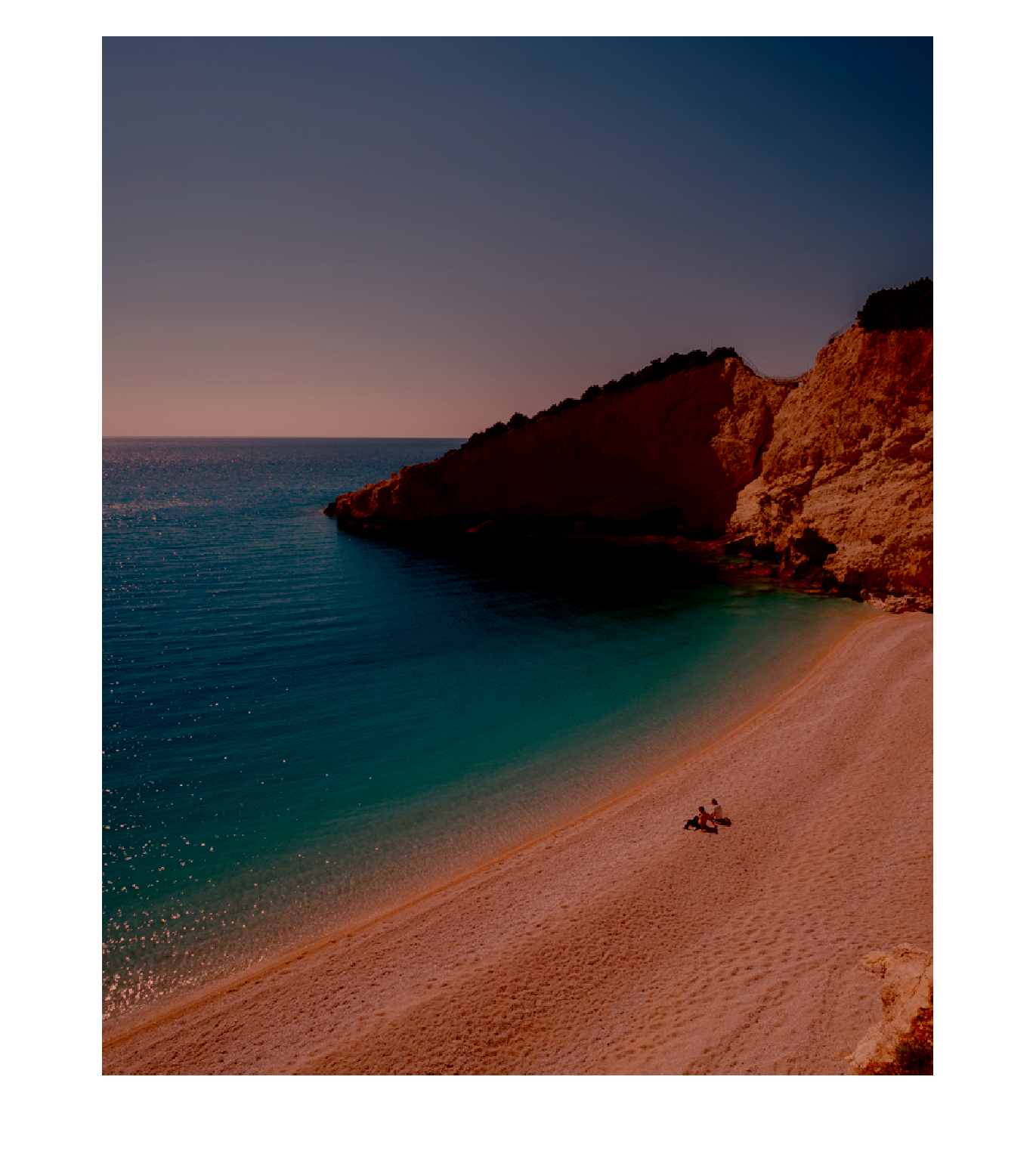}} &
						\subfloat[$\ell = 1$, $L = 10$]{\includegraphics[width = 0.2\linewidth]{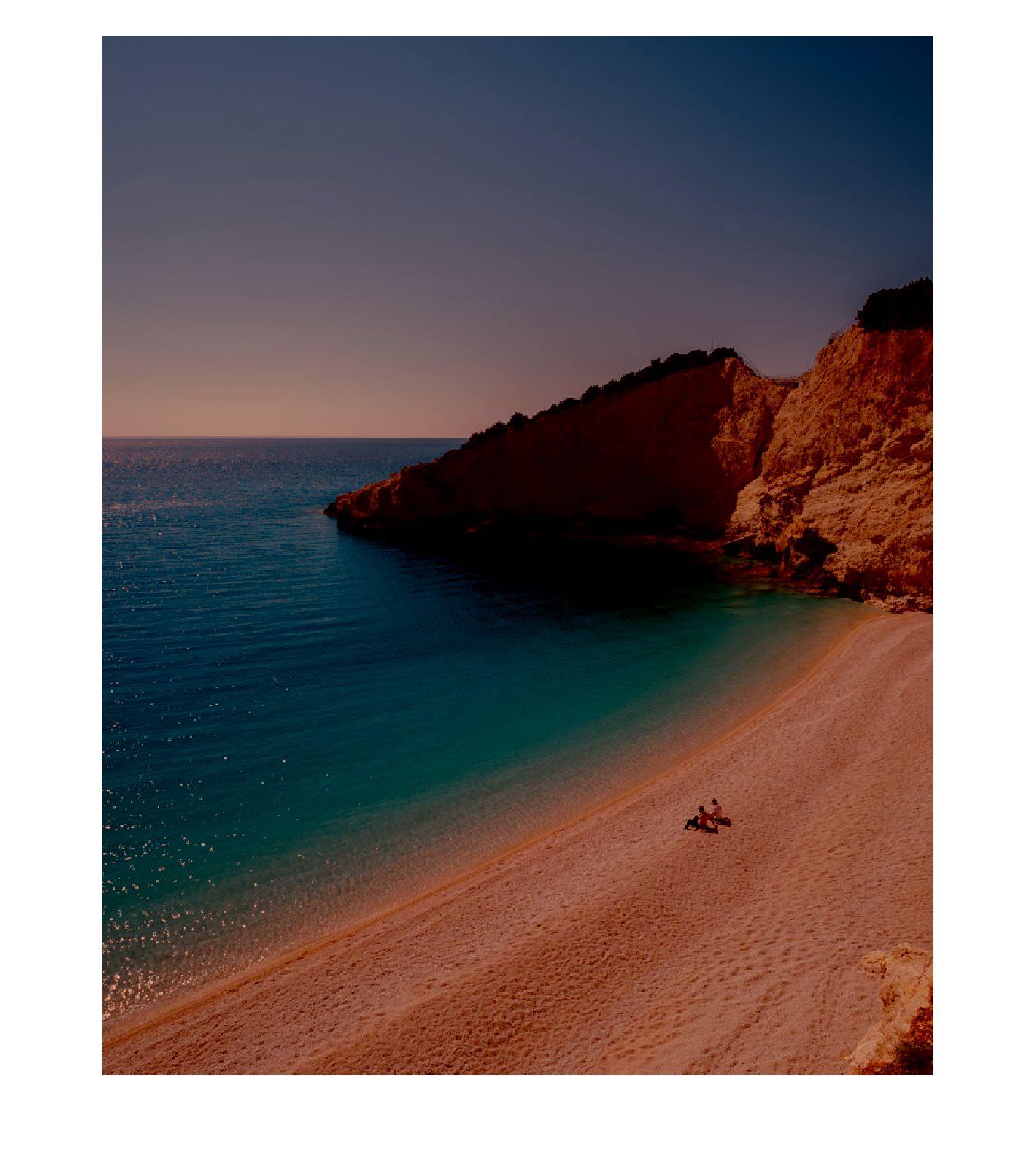}} &
						\subfloat[$\ell = 2$, $L = 5$]{\includegraphics[width = 0.2\linewidth]{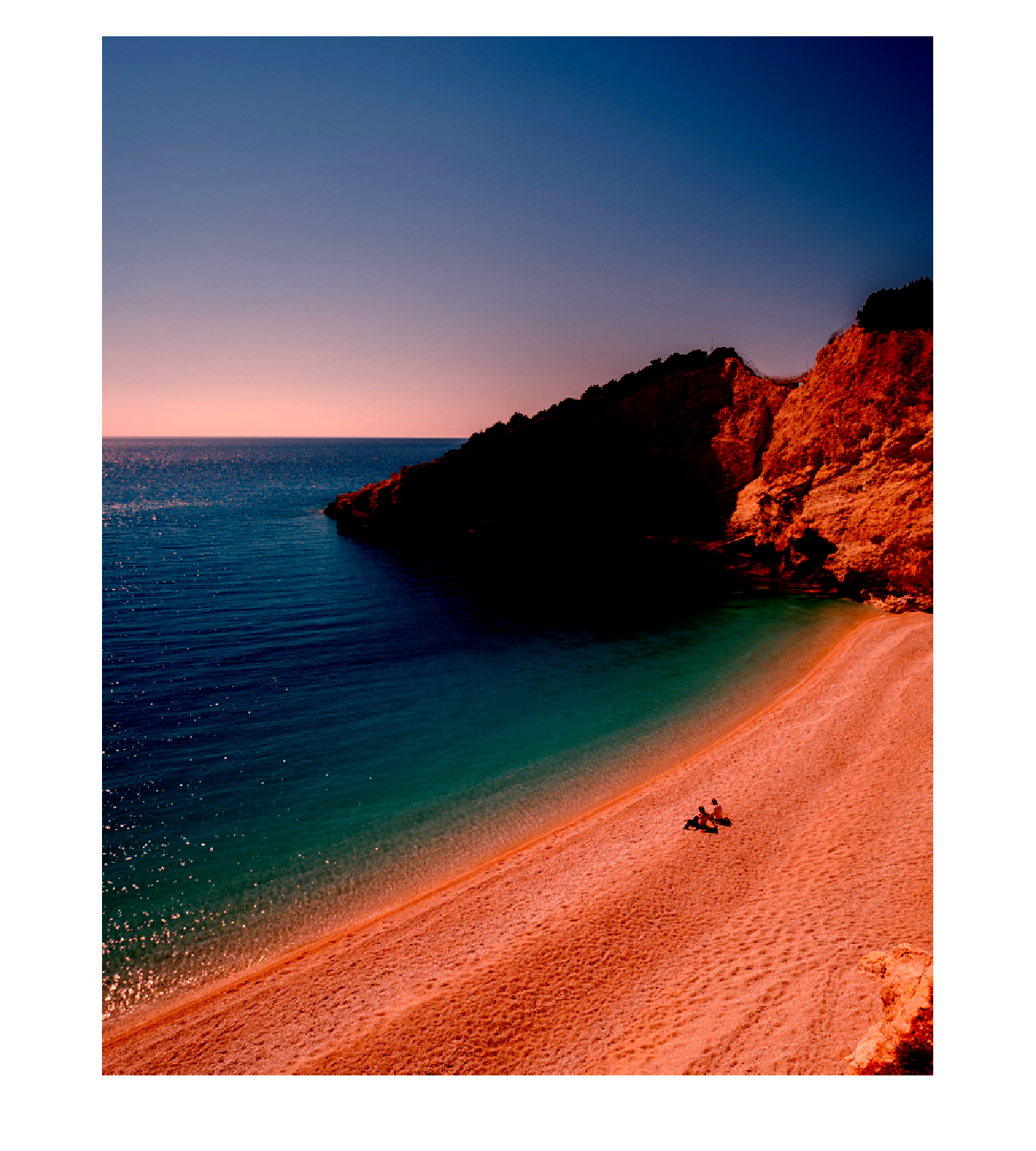}} &
						\subfloat[$\ell = 2$, $L = 10$]{\includegraphics[width = 0.2\linewidth]{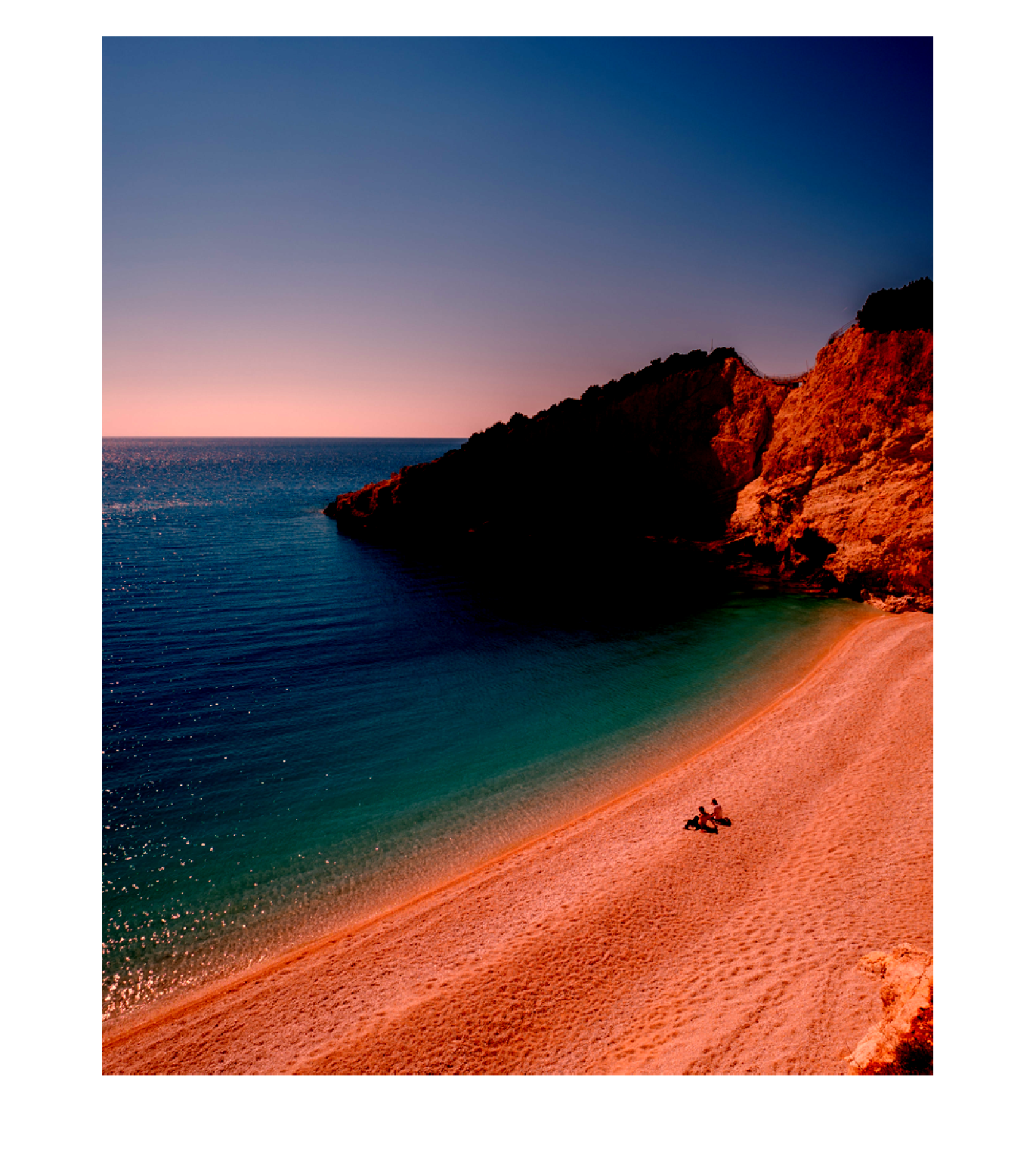}}
					\end{tabular}
					\caption{Color transfer outputs for different regularization parameters $\ell$ and $L$; see Section \ref{subsec:opt.transport}.}
					\label{fig:comparison.color.transfer}
				\end{center}
			\end{figure}

\section*{Acknowledgments}
We would like to thank Amir Ali Ahmadi for bringing the problem of shape-constrained regression to our attention as well as for his suggestions and feedback. We would also like to thank Ioana Popescu and Johan L\"ofberg for their helpful comments and Rahul Mazumder and Dimitrije Ruzic for providing access to relevant datasets.


\begin{small}
\bibliography{pablo_amirali,sample,thesis,supp_bib}
\end{small}
\end{document}